\definecolor{light-gray1}{gray}{0.90}
\definecolor{light-gray2}{gray}{0.80}
\definecolor{light-gray3}{gray}{0.60}
\def\norm#1{\|#1\|}
\def\bra#1{\langle#1\rangle}
\def\wt#1{\widetilde{#1}}
\def\wh#1{\widehat{#1}}
\def\set#1{\left\{#1\right\}}
\newcommand{\N}{{\mathcal{N}}}
\newcommand{\T}{{\mathbb T}}
\newcommand{\R}{{\mathbb R}}
\newcommand{\C}{{\mathbb C}}
\newcommand{\Z}{{\mathbb Z}}
\newcommand{\ft}{{\mathcal{F}}}
\newcommand{\M}{{\mathcal{M}}}
\newcommand{\Sch}{{\mathcal{S}}}
\newcommand{\supp}{{\mbox{supp}}}
\newcommand{\px}{\partial_x}
\newcommand{\pt}{\partial_t}
\newcommand{\la}{\lambda}
\numberwithin{equation}{section}
\newtheorem{theorem}{Theorem}[section]
\newtheorem*{theorem*}{Theorem}
\newtheorem{proposition}{Proposition}[section]
\newtheorem{corollary}{Corollary}[section]
\newtheorem{lemma}{Lemma}[section]
\newtheorem{definition}{Definition}[section]
\theoremstyle{remark}
\newtheorem{remark}{Remark}[section]
\title[IVP of the modified Kawahara]{Well-posedness issues on the periodic modified Kawahara equation}
\author{Chulkwang Kwak}
\address{Facultad de Matem\'aticas, Pontificia Universidad Cat\'olica de Chile and Institute of Pure and Applied Mathematics, Chonbuk National University}
\email{chkwak@mat.uc.cl}
\thanks{C. K. is supported by FONDECYT Postdoctorado 2017 Proyecto No. 3170067 and project France-Chile ECOS-Sud C18E06.}
\subjclass[2010]{Primary 35Q53, 76B15, Secondary 35G25}
\keywords{Modified Kawahara equation, Initial Value Problem, Global well-posedness, Unconditional Uniqueness, Weak Ill-posedness}
\begin{document}
\begin{abstract}
This paper is concerned with the Cauchy problem of the modified Kawahara equation (posed on $\T$), which is well-known as a model of capillary-gravity waves in an infinitely long canal over a flat bottom in a long wave regime \cite{Hasimoto1970}. We show in this paper some well-posedness results, mainly the \emph{global well-posedness} in $L^2(\T)$. The proof basically relies on the idea introduced in Takaoka-Tsutsumi's works \cite{TT2004, NTT2010}, which weakens the non-trivial resonance in the cubic interactions (a kind of smoothing effect) for the local result, and the global well-posedness result immediately follows from $L^2$ conservation law. An immediate application of Takaoka-Tsutsumi's idea is available only in $H^s(\T)$, $s > 0$, due to the lack of $L^4$-Strichartz estimate for arbitrary $L^2$ data, a slight modification, thus, is needed to attain the local well-posedness in $L^2(\T)$. This is the first low regularity (global) well-posedness result for the periodic modified Kwahara equation, as far as we know. A direct interpolation argument ensures the \emph{unconditional uniqueness} in $H^s(\T)$, $s > \frac12$, and as a byproduct, we show the weak ill-posedness below $H^{\frac12}(\T)$, in the sense that the flow map fails to be uniformly continuous. 
\end{abstract}

\maketitle

\tableofcontents

\section{Introduction}\label{sec:intro}
\subsection{Setting}

A study on waves starts from an examination of a two-dimensional, irrotational flow of an incompressible ideal fluid with a free surface under the gravitational field. The fluid is bounded below by a solid bottom and above by an atmosphere of constant pressure. The upper surface is a free boundary, and the influence of the surface tension is naturally taken into account on the free surface. The motion of the free surface is called a capillary-gravity wave, and it is called a gravity wave or a water wave in the case without the surface tension.

\medskip

In the mathematical view, the waves are formulated as a free boundary problem for the incompressible, irrotational Euler equation. Rewriting the equations in an appropriate non-dimensional form, one gets two non-dimensional parameters $\delta := \frac{h}{\lambda}$ and $\varepsilon := \frac{a}{h}$, where $h, \lambda$ and $a$ denote the water depth, the wave length and the amplitude of the free surface, respectively, and another non-dimensional parameter $\mu$ called the Bond number, which comes from the surface tension on the free surface. The physical condition $\delta \ll 1$ characterizes the waves, which are called long waves or shallow water waves, but there are several long wave approximations according to relations between $\varepsilon$ and $\delta$. We introduce three typical long wave regimes.

\medskip

\begin{enumerate}
\item Shallow water wave: $\varepsilon = 1$ and $\delta \ll 1$.
\item Korteweg-de Vries (KdV): $\varepsilon = \delta^2 \ll 1$ and $\mu \neq \frac13$.
\item Kawahara: $\varepsilon = \delta^4 \ll 1$ and $\mu = \frac13 + \nu\varepsilon^{\frac12}$.
\end{enumerate}

\medskip

In Item (1) regime, we obtain the (so-called) shallow water equations as the limit $\delta \to 0$. It is known that the shallow water equations are analogous to one-dimensional compressible Euler equations for an isentropic flow of a gas of the adiabatic index $2$, and thus its solutions generally have a singularity in finite time, even if the initial data are sufficiently smooth. Therefore, this long wave regime is used to explain breaking waves of water waves. In Item (2) regime, the following well-known, notable equation called the KdV equation has beed derived from the equations for capillary-gravity waves by Korteweg and de Vries \cite{KdV1895}: 
\[\pm2 u_t + 3uu_x +\left( \frac13 - \mu\right)u_{xxx} = 0.\]
Remark that when the Bond number $\mu = \frac13$, this equation degenerates to the inviscid Burgers equation. In connection with this critical Bond number, Hasimoto \cite{Hasimoto1970} derived a higher-order KdV equation of the form
\[\pm2 u_t + 3uu_x - \nu u_{xxx} +\frac{1}{45}u_{xxxxx} = 0.\]
in Item (3) regime, which is nowadays called the Kawahara equation.

\bigskip

This paper concerns with the Cauchy problem of the modified Kawahara equation given by\footnote{The equation \eqref{eq:kawahara} is reduced from the equation of the form 
\[\pt v + \alpha \px^5 v + \beta \px^3 v + \gamma \px v + \mu\px (v^3) = 0\]
 by  the renormalization of $v$.}
\begin{equation}\label{eq:kawahara}
\begin{cases}
\pt v - \px^5 v + \beta \px^3 v + \gamma \px v - \frac{\mu}{3}\px (v^3) = 0 ,\\
v(t,x) = v_0(x) \in H^s(\T),
\end{cases}
(t,x) \in [0,T] \times \T,
\end{equation}
where $\T = \R /2\pi \Z$, $\beta \ge 0$\footnote{One may extend the range of $\beta$ to negative values, but this change does not cause any difficulty in our analysis by regarding the case $|\xi| \le \frac{6|\beta|}{5}$ as low frequency part.}, $\gamma \in \R$, $\mu = \pm 1$ and $u$ is a real-valued unknown.

\medskip

The equation \eqref{eq:kawahara} can be generalized as follows:
\begin{equation}\label{eq:gkawahara}
\pt v - \px^5 v + \beta \px^3 v + \gamma \px v - \frac{\mu}{3}\px (v^p) = 0, \quad p = 2,3, \cdots.
\end{equation}

As seen before, when $p=2$, the equation \eqref{eq:gkawahara} is the Kawahara equation, which is a higher-order Korteweg-de Vries (KdV) equation with an additional fifth-order derivative term. This type of the equation \eqref{eq:gkawahara} was first found by Kakutani and Ono \cite{KO1969} in an analysis of magnet-acoustic waves in a cold collision free plasma. The equation \eqref{eq:gkawahara} was also derived, as already mentioned above, by Hasimoto \cite{Hasimoto1970} as a model of capillary-gravity waves in an infinitely long canal over a flat bottom in a long wave regime when the Bond number is nearly $\frac13$. Kawahara \cite{Kawahara1972} studied this equation \eqref{eq:gkawahara} numerically and observed that the equation has both oscillatory and monotone solitary wave solutions. This equation is also regarded as a singular perturbation of KdV equation. We further refer to, for instance, \cite{Benney1977, AS1985, Yamamoto1986, HS1988, PRG1988, Boyd1991, GJ1995, Iguchi2007, SW2012} and references therein for more background informations.

\medskip

The equation \eqref{eq:kawahara} admits at least three conservation laws:
\[E[v](t) = \int v \; dx = M_0[v_0],\]
\begin{equation}\label{eq:L2}
M[v](t) = \frac12 \int v^2 \; dx = M[v_0]
\end{equation}
and
\begin{equation}\label{Hamiltonian}
H[v](t) =  \frac{1}{2} \int (\px^2 v)^2 \;dx + \frac{\beta}{2} \int (\px v)^2 \; dx -\frac{\gamma}{2} \int  v^2 \; dx +\frac{\mu}{12} \int v^4 \; dx = H[v_0].
\end{equation}
The $L^2$ conserved quantity \eqref{eq:L2} will help us to extend the local solution to global one, so to attain the global well-posedness in $L^2(\T)$. Moreover, the equation \eqref{eq:kawahara} can be written as the Hamiltonian equation with respect to \eqref{Hamiltonian} as follows:
\begin{equation}\label{eq:Hamiltonian form}
v_t = \partial_x \nabla_v H\left(v\left(t\right)\right)  = \nabla_{\omega_{-\frac{1}{2}}} H\left(v\left(t\right)\right),
\end{equation}
where $\nabla_v$ is the $L^2$ gradient and $\omega_{-\frac{1}{2}}$ is the symplectic form in $H^{-\frac{1}{2}}$ defined as 
\[\omega_{-\frac{1}{2}}\left(v,w\right) := \int_{\T} v \partial_x^{-1} w dx,\]
for all $u,v \in H_0^{-\frac{1}{2}}$. Indeed, a direct computation yields
\[\begin{aligned}
\omega_{-\frac{1}{2}}\left(w, \nabla_{\omega_{-\frac{1}{2}}}H\left(v\left(t\right)\right)\right):=&~{} \left.\frac{d}{d\varepsilon}\right|_{\varepsilon=0} H \left(v+\varepsilon w\right) \\
=&~{}\int \left(\px^4v - \beta \px^2 v - \gamma v + \frac{\mu}{3} v^3 \right) w \\
=&~{}\int \left(-\px^5v + \beta \px^3 v + \gamma \px v - \frac{\mu}{3} \px \left(v^3\right) \right) \px^{-1} w \\
=&~{}\omega_{-\frac{1}{2}} \left(w,\px^5v - \beta \px^3 v - \gamma \px v + \frac{\mu}{3} \px \left(v^3\right)\right).
\end{aligned}\]
Such an expression by the Hamiltonian form (or regarding the flow map as a symplectomorphism on $H^{-\frac12}$) enables to study some symplectic property, in particular, non-squeezing property, which is initiated (for the dispersive PDE or a non-compact operator) by Bourgain \cite{Bourgain1994}. However, our well-posedness result presented below is available only up to in $L^2$ regularity level, thus we cannot explore it as of now. More delicate analysis (or new clever idea) will facilitate the $H^{-\frac12}$ global well-posedness, and so the non-squeezing analysis. We refer to \cite{Kuksin1995, CKSTT2005, Roumegoux2010, Mendelson2014, HK2016, KVZ2016-1, KVZ2016-2, Kwak2018-1} for more detailed expositions of the non-squeezing property.

\medskip

On the other hand, the expression \eqref{eq:Hamiltonian form} provides a convenient setting to use the spectral stability theory of \cite{DT2017}. We also refer to \cite{TDK2018} for another application of the Hamiltonian form \eqref{eq:Hamiltonian form} to derive criteria for instability of small-amplitude periodic solutions of \eqref{eq:Hamiltonian form}.

\medskip

These conserved quantities play important roles in the study of the partial differential equations. In particular, such conserved quantities enable to treat the (nontrivial) resonant interaction in the study of the initial value problem under the periodic boundary condition. In this work, the second conserved quantity \eqref{eq:L2} is enough to deal with the cubic resonance, since the nonlinearity in \eqref{eq:kawahara} has only one derivative and is of the cubic form. On the other hand, an appropriate nonlinear transformation, which has a bi-continuity property, helps to remove the cubic nontrivial resonance without using the conservation law \eqref{eq:L2} (see Section \ref{sec:pre} for more details, and refer to \cite{Staffilani1997, Kwak2016} for similar or more complicate cases).

\subsection{Different phenomena: periodic vs. non-periodic}\label{sec:Diff Phen}

The Cauchy problems for some dispersive equations have plenty of interesting issues under the periodic setting compared with the non-periodic problems. The first interesting (and also different from the non-periodic problem) issue is the presence of non-trivial resonances. In particular, the modified Kawahara equation \eqref{eq:kawahara} contains two non-trivial resonant terms (of the Fourier coefficient forms) in the nonlinearity such as
\[n|\wh{v}(n)|^2 \wh{v}(n) \quad \mbox{and} \quad n \left(\sum_{n' \in \Z} |\wh{v}(n)|^2 \right)\wh{v}(n),\]
whenever the Fourier variables have the following frequency relations:
\[n_1 +n _2 + n_3 = n \quad \mbox{and} \quad (n_1+n_2)(n_2+n_3)(n_3+n_1) =0.\]

\medskip

The latter resonance causes an uncontrollable perturbation phenomenon near the linear solution in the Sobolev space (of any regularity), while the former one is controllable perturbation (at least up to $H^{\frac12}$). Such phenomena never happen under the non-periodic condition, since this happens on the set of frequencies ($n_1, n_2$ and $n_3$), for which elements satisfy $(n_1+n_2)(n_2+n_3)(n_3+n_1) =0$. To deal with the second resonance, one can use $L^2$ conservation law \eqref{eq:L2} to make $\left(\sum_{n' \in \Z} |\wh{v}(n)|^2 \right)$ as a constant coefficient of the first order linear term, and thus remove this resonance in the nonlinearity. 

\medskip

On the other hand, the first resonance is more difficult to be dealt with, precisely, one cannot make it a constant coefficient linear part of the equation unlike the second one. However, as mentioned above, this resonance does not make any trouble in the study on the well-posedness problem up to $H^{\frac12}$ regularity. To lower the regularity (in other words, to study the IVP with rougher data), it is necessary to take a more delicate analysis on this term. When studying on this term, we face on another interesting issue under the periodic setting: the lack of smoothing effect. Possible remedies to this problem are for instance, the normal form reduction method and the short time Fourier restriction norm method. In the present paper, we take the normal form mechanism to gain a smoothing effect under non-resonant interactions. A better example to capture this difference is the fifth-order modified KdV equation \cite{Kwak2018-2}.

\smallskip

For more detailed expositions, see Sections \ref{sec:pre} and \ref{sec:energy}.

\subsection{Main results}
Before stating our main result, we introduce a well-known notion of well-posedness. The Duhamel principle ensures that the equation \eqref{eq:kawahara} is equivalent to the following integral equation
\begin{equation}\label{Duhamel}
v(t) = S(t)v_0 + \frac{\mu}{3}\int_0^t S(t-s) \left(v(s)^3\right)_x \; ds,
\end{equation}
where $S(t)$ is the linear propagator associated to the linear equation $\pt v - \px^5 v + \beta \px^3 v + \gamma \px v = 0$, precisely defined by
\[S(t)f = \frac{1}{2\pi} \sum_{n \in \Z} e^{inx}e^{it(n^5 + \beta n^3 - \gamma n)} \wh{f}(n).\]

\medskip

The equation 
\begin{equation}\label{eq:kawahara_simple}
\pt v - \px^5 v + \px(v^3)= 0
\end{equation} 
allows the scaling invariance, that is, if $v$ is a solution to \eqref{eq:kawahara_simple}, then $v_{\lambda} : = \lambda^{-2} v (\lambda^{-5} t, \lambda^{-1} x)$, $\lambda \ge 1$, is also a solution to \eqref{eq:kawahara_simple}. A straightforward calculation gives
\begin{equation}\label{eq:scaling argument}
\norm{f_\lambda}_{\dot{H}^s(\T_{\lambda})} = \lambda^{-\frac32-s}\norm{f}_{\dot{H}^s(\T)},
\end{equation}
which says $s_c = -\frac32$ is (scaling) critical Sobolev index, where $\T_{\lambda} = \R / (2\pi\lambda \Z)$.
\begin{remark}
One can not get the scaling invariance for the equation \eqref{eq:kawahara} due to $\px^3 v$ and $\px v$ terms. Instead, one sees that the equation \eqref{eq:kawahara} allows a scaling equivalence, that is to say, if $v$ is solution to \eqref{eq:kawahara}, then $v_{\lambda} : = \lambda^{-2} v (\lambda^{-5} t, \lambda^{-1} x)$ is a solution to
\begin{equation}\label{eq:scaled kawahara}
\pt v_{\lambda} - \px^5 v_{\lambda} + \beta \lambda^{-2} \px^3 v_{\lambda} + \gamma \lambda^{-4}\px v_{\lambda} - \frac{\mu}{3}\px (v_{\lambda}^3) = 0.
\end{equation}
However, even the smaller dispersive effect from $\px^3 v$ and $\px v$ itself is negligible compared with one from $\px^5 v$ (and thus no influence on our analysis), hence the equation \eqref{eq:kawahara} follows the scaling rule observed above.
\end{remark}

\begin{remark}
The scaling symmetry is necessary to prove the local-well-posedness in $L^2$, in particular, for the validity of $L^4$ estimate.
\end{remark}

\medskip

We first state well-known definition of the local well-posedness for (scaling sub-critical) IVPs (see, for instance \cite{Cazenave2003, Tao2006}).
\begin{definition}[Local well-posedness]\label{def:WP}
Let $v_0 \in H^s(\T)$ be given. We say that the IVP of \eqref{eq:kawahara} is locally well-posed in $H^s(\T)$ if the following properties hold:
\begin{enumerate}
\item \emph{(Existence)} There exist a time $T = T(\norm{v_0}_{H^s}) > 0$ and a solution $v$ to \eqref{eq:kawahara} such that $v$ satisfies \eqref{Duhamel} and belongs to a subset $X_T^s$ of $C([0,T];H^s)$.

\medskip
 
\item \emph{(Uniqueness)} The solution is unique in $X_T^s$.

\medskip 

\item \emph{(Continuous dependence on the data)} The map $v_0 \mapsto v$ is continuous from a ball $B \subset H^s$ to $X_T^s$ (with the $H^s$ topology).
\end{enumerate}
\end{definition}

\begin{remark}\label{rem:WP}
We extend the notion of well-posedness presented in Definition \ref{def:WP} in (at least) three directions.
\begin{enumerate}
\item (Global well-posedness) We say the IVP is \emph{globally} well-posed if we can take $T$ arbitrary large.

\medskip

\item (Unconditional uniqueness) We say that the IVP is \emph{unconditional} well-posed if we can take $X_T^s = C([0,T];H^s)$.

\medskip

\item (Uniform well-posedness) We say that the IVP is \emph{uniform} well-posed if the solution map $v_0 \mapsto v$ is uniformly continuous from a ball $B \subset H^s$ to $X_T^s$. Similarly, one can define the notion of Lipschitz well-posedness, $C^k$ well-posedness, $k=1,2,\cdots$, and analytic well-posedness.
\end{enumerate}
\end{remark}

\begin{remark}\label{rem:ill}
Once the Picard iteration method works well on a IVP, one immediately obtains that the map is not only uniformly continuous but also real analytic (in this case, we say the problem a \emph{semilinear} problem). On the other hand, if one cannot apply the iteration method to a IVP (due to, for example, a strong nonlinearity compared to a dispersion or the presence of non-trivial resonances), one cannot reach the uniform well-posedness. This case is referred to as \emph{weakly} or \emph{mild} ill-posedness (in this case, we say the problem a \emph{quasilinear} problem).
\end{remark}

We are now in a position to state results established in this paper. The first theorem is to show the uniform well-posedness of \eqref{eq:kawahara}.
\begin{theorem}\label{thm:local0}
Let $s \ge \frac12$. Then, the Cauchy problem of \eqref{eq:kawahara} is locally (in time) well-posed in $H^s(\T)$. Moreover, the uniform continuity (indeed analytic) of the flow map holds in the class 
\begin{equation}\label{eq:class}
\set{v_0 \in H^s : \norm{v_0}_{L^2} = c}, \quad c \ge 0 \quad \mbox{is fixed}.
\end{equation}
\end{theorem}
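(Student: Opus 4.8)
The plan is to solve \eqref{eq:kawahara} by a contraction mapping argument on the Duhamel formulation \eqref{Duhamel}, carried out in a Bourgain space $X^{s,b}$ adapted to the fifth-order propagator $S(t)$, after first removing the more dangerous of the two resonances. Since the equation is real, the second resonant interaction produces exactly a constant-coefficient transport term $\sim \|v\|_{L^2}^2\,\px v$, and $\|v\|_{L^2}^2 = c^2$ is conserved and \emph{fixed} on the class \eqref{eq:class}. I would therefore begin with a Galilean-type change of frame $w(t,x) := v(t,x+\kappa t)$, with $\kappa=\kappa(c)$ chosen to cancel this transport, reducing \eqref{eq:kawahara} to an equation whose nonlinearity, in Fourier variables, consists only of the genuinely non-resonant interactions together with the first resonant term $\sim n|\wh{w}(n)|^2\wh{w}(n)$. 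The restriction to a fixed $L^2$ norm is used precisely here: two data on the same sphere are carried into a \emph{common} moving frame, which is what will ultimately keep the flow map uniformly continuous on \eqref{eq:class}.

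Next I would fix the functional framework: work in $X^{s,b}$ with $b=\tfrac12^{+}$, together with the usual time localization producing a positive power $T^{\theta}$, recall the embedding $X^{s,b}\hookrightarrow C([0,T];H^s)$ and the standard linear and Duhamel estimates, and record the $L^4_{t,x}$-Strichartz estimate for $S(t)$, which is available for every $s>0$ and hence applies throughout $s\ge\tfrac12$ (the scaling remark indicates this is what fails at the $L^2$ endpoint). With these in hand the whole matter reduces to a single trilinear estimate of the schematic form $\norm{\px(w_1 w_2 w_3)}_{X^{s,b-1}}\lesssim T^{\theta}\prod_{i=1}^{3}\norm{w_i}_{X^{s,b}}$.

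For the trilinear estimate I would split the frequency interactions according to the resonance function. Writing $\phi(n)=n^5+\beta n^3-\gamma n$ and using $n=n_1+n_2+n_3$, one has the factorization
\[\phi(n)-\phi(n_1)-\phi(n_2)-\phi(n_3)=(n_1+n_2)(n_2+n_3)(n_3+n_1)\bigl(5Q+3\beta\bigr),\qquad Q:=\sum_i n_i^2+\sum_{i<j}n_i n_j,\]
so that off the resonant set $(n_1+n_2)(n_2+n_3)(n_3+n_1)=0$ the modulation is large, $|\Phi|\gtrsim n_{\max}^2$. On this non-resonant region the modulation gain $|\Phi|^{1/2}\gtrsim n_{\max}$ absorbs the single derivative, and the remaining factors are controlled by the $L^4$-Strichartz estimate; this part is comfortable for all $s\ge\tfrac12$, indeed well below. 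On the resonant set the second interaction has already been gauged away, so the only survivor is $n|\wh{w}(n)|^2\wh{w}(n)$, which carries \emph{no} modulation gain and must be handled by hand. A direct computation gives
\[\bigl\| n\langle n\rangle^{s}|\wh{w}(n)|^2\wh{w}(n)\bigr\|_{\ell^2_n}\le\Bigl(\sup_n|n|^{1/2}|\wh{w}(n)|\Bigr)^2\norm{w}_{H^s}\le\norm{w}_{H^{1/2}}^2\,\norm{w}_{H^s},\]
using the elementary embedding $\sup_n|n|^{1/2}|\wh{w}(n)|\le\norm{w}_{H^{1/2}}$. For $s\ge\tfrac12$ the right-hand side is $\lesssim\norm{w}_{H^s}^3$, which closes the estimate.

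This last step is the crux and the main obstacle: the first resonance is insensitive to the dispersion, so the only available leverage is regularity, and the embedding above degenerates exactly at $s=\tfrac12$ — which is why Theorem \ref{thm:local0} stops there and why a more delicate (Takaoka--Tsutsumi-type) modification of the space is needed to reach $L^2$. Granting the trilinear estimate, the map defined by \eqref{Duhamel} in the gauged variable is a contraction on a ball of $X_T^s$ for $T=T(\norm{v_0}_{H^s})$ small, yielding existence and uniqueness; since the fixed point arises from a convergent power series in the data, the solution map is analytic. Finally, undoing the gauge adds only the translation by $\kappa(c)t$, identical for all data on \eqref{eq:class}, so the analytic (in particular uniformly continuous) dependence survives on that class, giving the stated uniform well-posedness.
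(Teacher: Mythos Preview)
Your proposal is correct and follows essentially the same route as the paper: the Galilean shift you use to eliminate the second resonance is exactly equivalent to the paper's choice of working in the $Y^{s,b}$ space adapted to the shifted phase $p_0(n)=n^5+\beta n^3-(\gamma+\tfrac{\mu}{2\pi}\|v_0\|_{L^2}^2)n$, and your treatment of the two remaining pieces matches Lemmas~\ref{lem:resonant1} and~\ref{lem:nonres-trilinear} (via Remark~\ref{rem:non-resonance estimate for s>1/2}), after which the contraction in Section~\ref{sec:WP 12} is identical. The only minor inaccuracy is the remark that the $L^4$ estimate ``is available for every $s>0$'': in the $Y^{s,b}$ framework (equivalently, after your Galilean shift) it holds with no constraint on $s$ or on the data; the $s>0$ caveat pertains only to the \emph{modified} phase $p_\lambda$ used later for the low-regularity theory.
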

The proof is based on the standard Fourier restriction norm method (with trilinear estimates), initially introduced by Bourgain \cite{Bourgain1993}. The regularity threshold $s = \frac12$, for which the local well-posedness of \eqref{eq:kawahara} holds, occurs due to the nontrivial resonant term $i\mu n|\wh{v}(n)|^2\wh{v}(n)$ as explained in Section \ref{sec:Diff Phen}. To improve Theorem \ref{thm:local0} below $H^{\frac12}(\T)$, it is necessary to reduce the strength of the resonance. 

\medskip

In \cite{TT2004, NTT2010}, the authors introduced a way to weaken the resonance by establishing a kind of smoothing effects in the context of modified KdV equation. Let us be more precise (in the context of modified Kawahara equation \eqref{eq:kawahara}). The evolution operator given by
\[\mathcal{W}(t)v := \frac{1}{2\pi} \sum_{n \in \Z} e^{i(nx + tp(n) +  t n|\wh{v}_0(n)|^2)}\wh{v}(n)\]
enables to reduce $in|\wh{v}(n)|^2\wh{v}(n)$ by $in\left(|\wh{v}(n)|^2 - \wh{v}_0(n)|^2\right)\wh{v}(n)$. An appropriate estimate for 
\[\sup_{n \in \Z} \left|\mbox{Im}\left[ \int_0^t  n^2 \sum_{\N_n}\wh{v}(s,n_1)\wh{v}(s,n_2)\wh{v}(s,n_3)\wh{v}(s,-n) \; ds \right]\right|\]
succeeds in getting a kind of smoothing effects, and so the local well-posedness below $H^{\frac12}(\T)$. However, an immediate application of the argument in \cite{TT2004, NTT2010} does not ensure the smoothing effect in $L^2$ level, due to, as a technical reason, the lack of $L^4$ estimate for $L^2$ data. To resolve this problem we employ the scaling argument to make $L^2$ data sufficiently small (thanks to \eqref{eq:scaling argument}). See Section \ref{sec:pre} for more details. We state the main result in this paper.

\begin{theorem}\label{thm:local}
Let $0 \le s < \frac12$. Then, the Cauchy problem of \eqref{eq:kawahara} is locally (in time) well-posed in $H^s(\T)$.
\end{theorem}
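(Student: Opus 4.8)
The plan is to lower the regularity threshold from $s=\frac12$ (Theorem \ref{thm:local0}) by weakening the leading nontrivial resonance through the gauge-type transformation $\mathcal{W}(t)$ introduced above, and then to treat the endpoint $s=0$ separately by a scaling reduction. I would first pass to the interaction representation $u(t):=S(-t)v(t)$ and simultaneously conjugate by the modulation $e^{it n|\wh{v}_0(n)|^2}$, so that the new unknown $w$ satisfies an equation in which the purely resonant self-interaction $i\mu n|\wh{v}(n)|^2\wh{v}(n)$ is replaced by $i\mu n\big(|\wh{v}(n)|^2-|\wh{v}_0(n)|^2\big)\wh{v}(n)$. The second resonance $n\big(\sum_{n'}|\wh{v}(n')|^2\big)\wh{v}(n)$ is absorbed into a modified (constant-in-$n$) linear propagator using the $L^2$ conservation law \eqref{eq:L2}. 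The heart of the matter is then to show that the difference $|\wh{v}(t,n)|^2-|\wh{v}_0(n)|^2$ is small and carries a smoothing gain, which I would extract by estimating
\[\sup_{n\in\Z}\left|\mathrm{Im}\left[\int_0^t n^2 \sum_{\N_n}\wh{v}(s,n_1)\wh{v}(s,n_2)\wh{v}(s,n_3)\wh{v}(s,-n)\,ds\right]\right|.\]

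Next I would set up the contraction in a Bourgain space $X^{s,b}$ (with $b$ slightly larger than $\frac12$) adapted to the dispersion $p(n)=n^5+\beta n^3-\gamma n$, together with an auxiliary time-localized norm to control the quantity above. The two estimates to prove are: (i) a non-resonant trilinear estimate, which is where the fifth-order dispersion pays off, since on the complement of the resonant set $\{(n_1+n_2)(n_2+n_3)(n_3+n_1)=0\}$ the modulation function $\Phi=p(n)-p(n_1)-p(n_2)-p(n_3)$ is bounded below by a high power of the frequencies (it factors through $(n_1+n_2)(n_2+n_3)(n_3+n_1)$), yielding enough smoothing to close the estimate for all $s\ge 0$; and (ii) the resonant smoothing estimate controlling the displayed quadrilinear form uniformly in $n$, which quantifies the gain produced by the gauge. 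Closing the fixed point in the transformed variable and transferring back via the bi-continuity (homeomorphism property) of $\mathcal{W}(t)$ on the relevant spaces then yields existence, uniqueness, and continuous dependence, with a lifespan $T=T(\norm{v_0}_{H^s})$.

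The step I expect to be the main obstacle is (ii), the resonant smoothing estimate: the weight $n^2$ in the quadrilinear form is genuinely dangerous, and it must be absorbed using only the weak non-resonance that survives after the exactly resonant piece has been cancelled by the gauge. I would handle this by a careful frequency case analysis according to the size of $(n_1+n_2)(n_2+n_3)(n_3+n_1)$, integrating by parts in time to exploit the surviving oscillation and applying an $L^4$-type (or bilinear $L^2$) Strichartz bound in the non-degenerate regimes.

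For the endpoint $s=0$, the above scheme breaks because the $L^4$ estimate used in (ii) fails for arbitrary $L^2$ data on $\T$. Here I would invoke the scaling equivalence: replacing $v$ by $v_\la=\la^{-2}v(\la^{-5}t,\la^{-1}x)$ on $\T_\la=\R/(2\pi\la\Z)$ makes $\norm{v_\la(0)}_{L^2(\T_\la)}=\la^{-3/2}\norm{v_0}_{L^2(\T)}$ arbitrarily small by \eqref{eq:scaling argument}, so that the loss in the $L^4$ estimate on the large torus is compensated by the smallness of the rescaled data and the contraction closes on a time interval of fixed (rescaled) length. Since the extra terms in the scaled equation \eqref{eq:scaled kawahara} carry the favourable factors $\beta\la^{-2}$ and $\gamma\la^{-4}$, they only diminish the dispersion and do not affect the estimates; undoing the scaling returns a solution on $\T$ and completes the proof.
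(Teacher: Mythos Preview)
Your overall strategy---absorb the second resonance via $L^2$ conservation, apply the $\mathcal{W}(t)$ gauge to reduce the first resonance to $in\big(|\wh{v}(n)|^2-|\wh{v}_0(n)|^2\big)\wh{v}(n)$, prove a quadrilinear smoothing bound on the displayed expression, and rescale to $\T_\la$ at $s=0$ to access the $L^4$ estimate---matches the paper's architecture. The non-resonant trilinear estimate and the scaling reduction are handled essentially as you describe.

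The genuine gap is your plan to close by a \emph{contraction/fixed-point} argument. This cannot work here, and the paper does not attempt it; it proceeds by a \emph{compactness/energy} method. Two reasons:

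\medskip
\noindent(1) The smoothing bound on $\sup_n|n|\big||\wh{v}(t,n)|^2-|\wh{v}_0(n)|^2\big|$ is not a multilinear estimate on arbitrary $v\in X^{s,b}$; it is an \emph{a priori} estimate on solutions, obtained by writing $\pt|\wh{v}(n)|^2$ via the equation itself (so that only the non-resonant part contributes) and then integrating by parts in $t$ (normal form). If you insert an arbitrary iterate $u$ from a ball into the resonant term of the Duhamel map, you cannot invoke this identity, and there is no substitute estimate at the required regularity. Consequently the Picard map is not known to be a contraction on any ball in $X^{s,b}$.

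\medskip
\noindent(2) More decisively, the same paper proves (Theorem~\ref{thm:weak ill-posedness}) that the flow map fails to be uniformly continuous on $H^s$ for $0\le s<\frac12$. By Remark~\ref{rem:ill}, a successful Picard iteration would force analytic (hence uniformly continuous) dependence, contradicting this. So a direct contraction scheme is ruled out in principle, not just in practice.

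\medskip
What the paper actually does (Section~\ref{sec:global}) is: take smooth approximations $v_0^{(j)}\to v_0$, use the high-regularity global solutions from Theorem~\ref{thm:local0}, establish the uniform bound $\norm{u_j}_{X_\la^{0,1/2}}\le 2C_0\la^{-3/2}K$ by combining Lemma~\ref{lem:nonres-trilinear} with the smoothing Corollary~\ref{cor:main} and a continuity argument, prove tightness via a localized $\mathcal F\ell^1$ version of the smoothing estimate, and pass to the limit by Arzel\`a--Ascoli. Uniqueness is then obtained separately from a difference estimate (Proposition~\ref{prop:main_uniqueness}), which requires an auxiliary $H^{-1/2}$ bound on $w=u_1-u_2$ (Lemma~\ref{lem:sym}) to handle the boundary terms in the normal form, since Corollary~\ref{cor:trilinear} is unavailable for differences. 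Note also that the working space is $X_\la^{s,\frac12}$ (not $b>\frac12$); the endpoint $b=\frac12$ is natural for the smoothing argument and is compatible with the compactness route.
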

Thanks to $L^2$ conservation law \eqref{eq:L2}, we extend Theorem \ref{thm:local} to the global one.
\begin{theorem}\label{thm:global}
The Cauchy problem of \eqref{eq:kawahara} is globally (in time) well-posed in $L^2(\T)$.
\end{theorem}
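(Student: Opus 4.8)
The plan is to promote the $L^2$ local theory of Theorem \ref{thm:local} (taken at $s=0$) to a global statement by the standard iteration driven by the conservation law \eqref{eq:L2}. The crucial input is that the local existence time supplied by Theorem \ref{thm:local} depends only on $\norm{v_0}_{L^2}$, which is exactly what the scaling used in the local construction provides: writing $v_\la(t,x) = \la^{-2}v(\la^{-5}t,\la^{-1}x)$ and using \eqref{eq:scaling argument} at $s=0$, one has $\norm{v_{0,\la}}_{L^2(\T_\la)} = \la^{-3/2}\norm{v_0}_{L^2(\T)}$, so a choice $\la = \la(\norm{v_0}_{L^2}) \ge 1$ makes the rescaled datum small; small-data local well-posedness on a fixed interval $[0,T_0]$ then unscales to existence on $[0,\la^{-5}T_0]$ for the original problem. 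Hence there is a positive, nonincreasing function $\rho \mapsto T_*(\rho)$ such that any datum with $\norm{v_0}_{L^2} = \rho$ generates a solution on $[0,T_*(\rho)]$.

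With this in hand I would iterate. Given $v_0 \in L^2(\T)$ with $\norm{v_0}_{L^2} = \rho$, Theorem \ref{thm:local} gives a solution on $[0,T_*(\rho)]$, and \eqref{eq:L2} yields $\norm{v(T_*(\rho))}_{L^2} = \rho$. Restarting from $v(T_*(\rho))$, whose $L^2$ norm is again $\rho$, produces a solution on $[T_*(\rho),2T_*(\rho)]$, and iterating $n$ times covers $[0,nT_*(\rho)]$ while keeping $\norm{v(t)}_{L^2} \equiv \rho$. Because the step length $T_*(\rho)$ never shrinks, $nT_*(\rho) \to \infty$, so the solution is global. Uniqueness and continuous dependence on $[0,T]$ for arbitrary fixed $T$ follow by composing the corresponding local statements over the finitely many steps needed to reach $T$; here one uses that for data $v_{0,k} \to v_0$ in $L^2$ the conserved norms $\norm{v_{0,k}}_{L^2} \to \rho$ stay uniformly close to $\rho$, so a common step length slightly below $T_*(\rho)$ works at every stage.

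The main obstacle is to justify \eqref{eq:L2} at $L^2$ regularity, since the identity $\frac{d}{dt}\int v^2\,dx = 0$ rests on an integration by parts valid only for smooth solutions. I would resolve this by approximation. Choose smooth data $v_{0,k} \to v_0$ in $L^2$; by Theorem \ref{thm:local0} together with persistence of regularity (and the already established $L^2$ existence on the common interval $[0,T_*(\rho)]$, since $\norm{v_{0,k}}_{L^2}$ is uniformly bounded), the solutions $v_k$ are smooth and satisfy $\norm{v_k(t)}_{L^2} = \norm{v_{0,k}}_{L^2}$. The continuous dependence in Theorem \ref{thm:local} gives $v_k \to v$ in $C([0,T_*(\rho)];L^2)$, and since $\norm{\cdot}_{L^2}$ is continuous on $L^2$, the conservation law passes to the limit and holds for $v$. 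Only continuous dependence, not uniform continuity, is used, so the weak ill-posedness below $H^{1/2}(\T)$ is no obstruction. A secondary point is that the local construction rescales to the family \eqref{eq:scaled kawahara} posed on the enlarged tori $\T_\la$, so $T_*(\rho)$ must be uniform across these rescaled equations; as observed in the remark after \eqref{eq:scaled kawahara}, the weakened $\px^3$ and $\px$ dispersion is harmless, and this uniformity is already part of the proof of Theorem \ref{thm:local}, requiring no new estimate here.
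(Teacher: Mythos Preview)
Your proposal is correct and follows essentially the same approach as the paper: the paper's own proof (Section \ref{sec:global}) simply invokes the $L^2$ conservation law \eqref{eq:L2} to see that $\norm{v(\la^{-5})}_{L^2}=\norm{v(0)}_{L^2}$, and then iterates the local result on successive intervals $[k\la^{-5},(k+1)\la^{-5}]$, exactly as you describe. Your write-up is in fact more careful than the paper's terse argument---you spell out why the local lifespan depends only on $\norm{v_0}_{L^2}$ and justify the conservation law at $L^2$ regularity by passing to the limit from the smooth approximations used in the local construction---but no genuinely different idea is involved.
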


In the proof of Theorem \ref{thm:local}, we are not able to attain the uniform well-posedness even in the class \eqref{eq:class} due to the resonance $in|\wh{v}(n)|^2 \wh{v}(n)$, that is, the flow map defined in the proof of Theorem \ref{thm:local} for $0 \le s < \frac12$ does not satisfy the uniform continuity in the following sense:
\begin{definition}[Uniform continuity of flow maps]\label{def:UCD}
We say that the flow map is uniformly continuous if for all $R>0$, there exist $T > 0$ and a continuous function $\zeta$ on $[0, \infty)$ satisfying $\zeta(r) \to 0$ as $r \to 0$ such that solutions $v_1, v_2$ to \eqref{eq:kawahara} with $\norm{v_1(0)}_{H^s}, \norm{v_2(0)}_{H^s} \le R$ satisfy
\[\norm{v_1 - v_2}_{C_TH^s} \le \zeta(\norm{v_1(0) - v_2(0)}_{H^s}).\]
\end{definition}
As a byproduct of Theorem \ref{thm:local}, we have
\begin{theorem}\label{thm:weak ill-posedness}
Let $0 \le s < \frac12$. Then, the Cauchy problem of \eqref{eq:kawahara} is weakly ill-posed in $H^s(\T)$ in a sense of Remark \ref{rem:ill}. In other words, the flow map does not hold the property presented in Definition \ref{def:UCD}.
\end{theorem}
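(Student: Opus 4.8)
The plan is to disprove the modulus-of-continuity estimate of Definition \ref{def:UCD} by exhibiting two sequences of solutions whose initial data converge to each other in $H^s(\T)$ while the solutions themselves stay separated at a fixed time. The mechanism is the amplitude-dependent phase rotation produced by the resonant term $i\mu n|\wh v(n)|^2\wh v(n)$, which is exactly the interaction isolated by the gauge $\mathcal{W}(t)$ and which, for $s<\frac12$, rotates arbitrarily fast.

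First I would build a two-parameter family of explicit approximate solutions concentrated at a single high frequency $N$. For $\omega\in[1,2]$ set
\[ v^{\omega}_N(t,x) = \omega N^{-s}\cos\Big(Nx - \big(p(N) + c\,\mu\, N\,\omega^2 N^{-2s}\big)t\Big), \qquad p(N) = N^5+\beta N^3 - \gamma N, \]
where $c$ is the (Fourier-convention-dependent) constant making the nonlinear phase $c\mu N|\wh{v^{\omega}_N}(0,N)|^2\,t$ coincide with the resonant gauge phase in $\mathcal{W}(t)$. By construction $\norm{v^{\omega}_N(0)}_{H^s}\simeq\omega\lesssim 1$, so the data lie in a fixed ball, and $v^{\omega}_N$ solves \eqref{eq:kawahara} up to an error $F^{\omega}_N$ coming only from the genuinely nonresonant interactions and from the harmonic generated at frequency $3N$; since $v^{\omega}_N$ is smooth and sharply frequency-localized, $F^{\omega}_N$ can be estimated with a gain tending to $0$ as $N\to\infty$, uniformly in $\omega$.

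Second comes the decorrelation count. Choosing $\omega_1,\omega_2\in[1,2]$ with $|\omega_1-\omega_2|\simeq N^{-(1-2s)}$, the data satisfy $\norm{v^{\omega_1}_N(0)-v^{\omega_2}_N(0)}_{H^s}\simeq|\omega_1-\omega_2|\to 0$, whereas at a fixed time $t\in(0,T]$ the difference of the nonlinear phases equals $c\mu(\omega_1^2-\omega_2^2)N^{1-2s}\,t\simeq(\omega_1+\omega_2)\,t$, which is of order one because $s<\frac12$ forces $N^{1-2s}\to\infty$. Fixing $T$ (within the uniform existence time of Theorem \ref{thm:local}) and tuning the constant so that this phase gap is comparable to $\pi$ gives $\norm{v^{\omega_1}_N(T)-v^{\omega_2}_N(T)}_{H^s}\gtrsim 1$; thus the approximate solutions already violate Definition \ref{def:UCD}. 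If one wishes the obstruction to be genuinely the resonance $n|\wh v(n)|^2\wh v(n)$ rather than $n\big(\sum_{n'}|\wh v(n')|^2\big)\wh v(n)$ — consistent with the fact that uniform continuity holds in the fixed-$L^2$ class \eqref{eq:class} for $s\ge\frac12$ — one splits the data across two comparable frequencies $N,M$ and moves amplitude along the sphere $|\wh v(N)|^2+|\wh v(M)|^2=\text{const}$, keeping $\norm{v}_{L^2}$ fixed; the computation is identical.

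Finally I would upgrade from approximate to genuine solutions. Let $\wt v^{\omega}_N$ be the exact solution of \eqref{eq:kawahara} with data $v^{\omega}_N(0)$ furnished by Theorem \ref{thm:local}. The delicate point is that one cannot invoke a Lipschitz or uniform dependence bound, since that is precisely what is being refuted; instead I would estimate the Duhamel remainder $\wt v^{\omega}_N - v^{\omega}_N$ directly, combining the smallness of $F^{\omega}_N$ with the a priori bounds of the local theory on this smooth, single-frequency family, to obtain $\sup_{[0,T]}\norm{\wt v^{\omega}_N - v^{\omega}_N}_{H^s}=o(1)$ as $N\to\infty$, uniformly in $\omega$. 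Coupling this with the order-one separation of the profiles yields $\norm{\wt v^{\omega_1}_N(T)-\wt v^{\omega_2}_N(T)}_{H^s}\gtrsim 1$ for data differing by $o(1)$, contradicting Definition \ref{def:UCD}. I expect this last step — controlling the exact solution by the explicit profile without recourse to uniform well-posedness, and in particular absorbing the $3N$ harmonic and the nonresonant error over the whole interval $[0,T]$ — to be the main obstacle; everything else is the standard frequency-modulation mechanism adapted to the fifth-order phase $p(n)$.
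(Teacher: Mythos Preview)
Your proposal is correct in spirit and identifies the right mechanism --- the amplitude-dependent phase rotation from the resonant term $in|\wh v(n)|^2\wh v(n)$, which for $s<\frac12$ produces $O(N^{1-2s})$ phase drift and hence decorrelation of nearby data. This is exactly the frequency-modulation argument of \cite{BGT2002, CCT2003, TT2004} adapted to the fifth-order phase, and it is the same mechanism the paper exploits.

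The execution, however, differs in two respects that are worth noting. First, you work at a \emph{fixed} time $T$ and propose to absorb the Duhamel remainder (the $3N$ harmonic and the nonresonant error) by exploiting the rapid temporal oscillation $|p_\la(3N)-3p_\la(N)|\sim N^5$; this is certainly doable but requires an ad hoc integration-by-parts or $X^{s,b}$ computation on the explicit profiles. The paper instead chooses a \emph{vanishing} time $t_K=K^{-\vartheta}$ with $0<\vartheta<1-2s$: the nonlinear phase gap is tuned to equal $\pi$ exactly at $t_K$, and the entire Duhamel contribution is made small simply by the factor $t_K^{\theta'}$ coming from the nonlinear estimates (Lemma~\ref{lem:nonres-trilinear} and Proposition~\ref{prop:main}) already proved in the paper. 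Second, rather than bounding $\wt v^{\omega}_N - v^{\omega}_N$ directly, the paper argues by contradiction: assuming uniform continuity, it expands the $H^s$ inner product of $v_K(t_K)-v_K^*(t_K)$ against the explicit two-mode profile $w_K$ and shows that the $\norm{w_K}_{H^s}^2\ge 4$ term cannot be cancelled by the vanishing Duhamel piece. This inner-product device neatly avoids the ``stability without uniform dependence'' issue you flag as the main obstacle.

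In short, both routes work; the paper's is shorter because the shrinking-time trick lets it recycle the trilinear and smoothing estimates wholesale, while yours is closer to the original Christ--Colliander--Tao template and requires a bespoke remainder estimate on $[0,T]$.
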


An interesting issue in the well-posedness theory is the \emph{unconditional uniqueness} as mentioned in Remark \ref{rem:WP}, that is to say, the uniqueness holds in some larger spaces that contain weak solutions even in higher regularity. Such an issue was first proposed by Kato \cite{Kato1995} in the context of Schr\"odinger equation. The unconditional uniqueness is referred to as the uniqueness in $L_T^{\infty}H^s$ without the restriction of any auxiliary function space (for instance $X_T^{s,\frac12}$\footnote{The function space $X_T^{s,b}$ is the time localization of the standard $X^{s,b}$ introduced by Bourgain \cite{Bourgain1993}} used in Theorem \ref{thm:local}).

\medskip

The unconditional well-posedness of some dispersive equations have been studied (for instance \cite{Kato1995, Zhou1997, FPT2003, Win2008, Tao2009, BIT2011, KO2012, GKO2013, MPV2018, MPV1, KOY2019, MY2018} and references therein). Some of these uniqueness results employed some auxiliary function spaces (for example Strichartz spaces \cite{Tao2009}, $X^{s,b}$-type \cite{Zhou1997, Win2008, MPV2018, MPV1}), which are designed to be large enough to contain $C_TH^s$ such that the uniqueness of the solution holds. On the other hand, a straightforward energy-type estimate via finite or infinite iteration scheme of the normal form reduction method is also available to prove the unconditional well-posedness in a certain class of $C_TH^s$ \cite{BIT2011, KO2012, GKO2013, KOY2019, MY2018}. Such an argument seems more natural and elementary since any other auxiliary function spaces does not be taken.

\medskip

We finally state the last result established in this work. The uniqueness in $X_T^{0,\frac12}$ established in Theorems \ref{thm:local} and \ref{thm:global} ensures 
\begin{theorem}\label{thm:unconditional}
Let $s > \frac12$. Then, the Cauchy problem of \eqref{eq:kawahara} is unconditionally globally well-posed in $H^s(\T)$.
\end{theorem}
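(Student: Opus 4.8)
The plan is to deduce Theorem~\ref{thm:unconditional} from the conditional uniqueness already available in the Bourgain class $X_T^{0,\frac12}$ (Theorems~\ref{thm:local} and~\ref{thm:global}) by showing that, when $s>\frac12$, an arbitrary solution lying only in $C([0,T];H^s)$ must in fact belong to $X_{T'}^{s,\frac12}\subset X_{T'}^{0,\frac12}$ on short time intervals; uniqueness then transfers automatically, while existence, continuous dependence and globalization are inherited. Concretely, I fix $s>\frac12$, let $u\in X_T^{s,\frac12}$ be the solution furnished by Theorem~\ref{thm:local}, take an arbitrary $v\in C([0,T];H^s)$ obeying the Duhamel formula~\eqref{Duhamel} with $v(0)=u(0)$, and aim to prove $u\equiv v$. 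Throughout I would work in the transformed framework of Section~\ref{sec:pre}: since $u$ and $v$ share their datum, the $L^2$ conservation law~\eqref{eq:L2} (rigorous for such $v$ because the nonlinearity conserves $L^2$ and $S(t)$ is $L^2$-unitary) makes $\sum_{n'}|\wh v(n')|^2$ a common constant, so the bad resonance $n\big(\sum_{n'}|\wh v(n')|^2\big)\wh v(n)$ is absorbed into a constant-coefficient linear term, and the bi-continuity of the transformation lets me pass freely between the original and the resonance-reduced equations.

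The heart of the argument is a self-improvement (bootstrap) step for $v$ alone. On a finite interval $C([0,T];H^s)\hookrightarrow X_{T'}^{s,0}$, so I start from $v\in X_{T'}^{s,0}$ and feed it into~\eqref{Duhamel}: the linear part belongs to every $X_{T'}^{s,b}$, while the Duhamel term obeys the standard estimate $\big\|\int_0^t S(t-\cdot)F\big\|_{X_{T'}^{s,b_1}}\lesssim \norm{F}_{X_{T'}^{s,b_1-1}}$. The key input is an \emph{interpolated trilinear estimate} in the $X^{s,b}$ scale, of the schematic form
\[\norm{\px(v^3)}_{X_{T'}^{s,b_1-1}} \lesssim {T'}^{\theta}\,\norm{v}_{X_{T'}^{s,b_0}}^3, \qquad b_0<b_1\le \tfrac12,\]
which gains a fixed amount $b_1-b_0=\delta>0$ in the modulation variable at the cost of spending spatial regularity, a trade affordable precisely when $s>\frac12$. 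Each application raises the modulation index of $v$ by $\delta$ while keeping the norm finite, so finitely many iterations place $v\in X_{T'}^{s,\frac12}$ on a short interval $[0,T']$ whose length depends only on $\sup_{[0,T]}\norm{v}_{H^s}$.

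Once $v\in X_{T'}^{s,\frac12}\subset X_{T'}^{0,\frac12}$, the conclusion is immediate: $u$ and $v$ both lie in the uniqueness class $X_{T'}^{0,\frac12}$ and share the same datum, so the uniqueness part of Theorems~\ref{thm:local} and~\ref{thm:global} forces $u=v$ on $[0,T']$. Since $T'$ is controlled solely by the $H^s$ size of the solutions, finitely many repetitions cover $[0,T]$, and Theorem~\ref{thm:global} allows $T$ to be taken arbitrarily large, yielding unconditional \emph{global} well-posedness in $H^s(\T)$ for every $s>\frac12$.

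The main obstacle is the interpolated trilinear estimate driving the bootstrap, and in particular the survival of the resonant interaction $n|\wh{\cdot}(n)|^2\wh{\cdot}(n)$ once one trades modulation weight for derivatives: this is the interaction that pins the threshold at $s=\frac12$ in Theorem~\ref{thm:local0}, and weakening the modulation index to initiate the bootstrap must be paid for by strictly more than $\frac12$ derivatives, which is exactly why the unconditional range is the open condition $s>\frac12$ rather than $s\ge\frac12$. A secondary technical point will be the endpoint of the iteration, namely reaching $b_1=\frac12$ rather than merely $b_1<\frac12$; I expect this to be handled by isolating the borderline contribution into the constant-coefficient and resonant parts, for which no modulation gain is needed, and by exploiting the ${T'}^{\theta}$ smallness on the genuinely non-resonant remainder. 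I note that, alternatively, a direct energy estimate on $w=u-v$ combined with a normal-form (integration-by-parts-in-time) reduction would also close the argument entirely within $C_TH^s$, but the interpolation route above is the most economical given that the $X_T^{0,\frac12}$ uniqueness is already in hand.
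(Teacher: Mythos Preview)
Your overall strategy---place an arbitrary $C_TH^s$ solution into $X_T^{0,\frac12}$ and then invoke the conditional uniqueness from Theorems~\ref{thm:local}--\ref{thm:global}---is exactly the paper's strategy. The difference lies in \emph{how} you propose to reach $X_T^{0,\frac12}$.

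You propose an iterative bootstrap in the modulation index at fixed spatial regularity $s$, driven by a trilinear estimate of the form $\norm{\partial_x(v^3)}_{X^{s,b_1-1}}\lesssim\norm{v}_{X^{s,b_0}}^3$ with $b_1>b_0$. The paper instead performs a \emph{single} interpolation step: it first proves (Lemma~\ref{lem:H-s}) that for $s>\tfrac12$ the full nonlinearity maps $L_T^\infty H^s$ into $L_T^\infty H^{-s}$; since trivially $C_TH^s\hookrightarrow X^{s,0}$, the equation then yields $v\in X^{-s,1}$, and interpolating the pair $(X^{s,0},\,X^{-s,1})$ at the midpoint gives $v\in X^{0,\frac12}$ in one stroke. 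No iteration is needed, and the only nonlinear input is the static Sobolev-space estimate of Lemma~\ref{lem:H-s}, which is where the restriction $s>\tfrac12$ enters.

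Your bootstrap has a genuine difficulty at its first step: with $b_0=0$ there is no modulation weight on the input factors, so in the configuration where the dominant modulation $L_{\max}\gtrsim|H_\lambda|$ sits on an \emph{input} (rather than on the output), you cannot extract any dispersive smoothing from the resonance function. The paper's trilinear estimate (Lemma~\ref{lem:nonres-trilinear}) needs $b\ge\tfrac12$ on the right precisely to cover this case, and it is not clear how ``spending spatial regularity above $\tfrac12$'' compensates when no modulation weight is present on that factor. The paper's interpolation route sidesteps this entirely: it never asks for a trilinear $X^{s,b}$ estimate at small $b$, only for the crude bound $\norm{\text{nonlinearity}}_{H^{-s}}\lesssim\norm{v}_{H^s}^3$, which for $s>\tfrac12$ follows by elementary Cauchy--Schwarz in frequency (this is the content of Lemma~\ref{lem:H-s}). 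If you want to salvage your scheme, the natural fix is to replace the bootstrap by exactly this interpolation: show the nonlinearity lies in $L_t^2H^{-s}$ (or $H^{s-1}$, using the algebra property), deduce $v\in X^{-s,1}$ from the equation, and interpolate once with $v\in X^{s,0}$.
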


For the proof of Theorem \ref{thm:unconditional}, we show the embedding property $C_TH^s \subset X_T^{0,\frac12}$, $s > \frac12$, where the space $X_T^{0,\frac12}$ was designed for the proof of Theorem \ref{thm:local}. Thus, the uniqueness in $X_T^{0,\frac12}$ in addition to Theorem \ref{thm:global} implies the unconditional well-posedness.

\begin{remark}
An alternative way used in \cite{KO2012} seems available to prove the unconditional well-posedness in $H^s(\T)$, $s > \frac12$ (also possible in $H^{\frac12}(\T)$). However, we do not take their argument for the proof of Theorem \ref{thm:unconditional} in order to avoid abusing the normal form mechanism. 
\end{remark}

\begin{remark}
Theorems \ref{thm:weak ill-posedness} and \ref{thm:unconditional} will be improved in the forthcoming work by developing the argument inspired by, for instance, Okamoto \cite{Okamoto2017} and Molinet, Pilod and Vento \cite{MPV2018}, respectively.
\end{remark}

\subsection{About the proof of Theorem \ref{thm:local}}

The proof is the standard compactness argument (or referred to as energy method). After changing of suitable variables ($v \mapsto v_{\la} =: u$) and collecting the non-resonance estimate (Lemma \ref{lem:nonres-trilinear}) and a smoothing effect (Corollary \ref{cor:main}), one establishes
\[\begin{aligned}
\norm{u}_{X_{\la}^{0,\frac12}} \le&~{} C_0\norm{u_0}_{L_{\la}^2} + C_1\la^{\frac12-}\norm{u}_{X_{\la}^{s,\frac12}}^3 \\
&+C_1\la^{0+} \Bigg(\norm{u_0}_{L_{\la}^2}^4 + \big(\norm{u_0}_{L_{\la}^2}^2 + \norm{u}_{X_{\la}^{0,\frac12}}^4\big)^2 +  (1+ \norm{u}_{X_{\la}^{0,\frac12}}^2)\norm{u}_{X_{\la}^{0,\frac12}}^3\Bigg) \norm{u}_{X_{\la}^{0,\frac12}}
\end{aligned}\]
This guarantees the uniform boundedness of $u$ in $X_{\la}^{s,\frac12}$ dependent only on the initial data for sufficiently large but fixed $\la \gg 1$, and thus the weak and strong convergence of $u_j$ to $u$ in $H_{\la}^s$ can be attained via Arzel\`a-Ascoli compactness theorem. $L^2$ conservation law \eqref{eq:L2} extends the local solution $u$ to the global one.  

\subsection{About the literature}

Not only the modified Kawahara equation, but also the generalized Kawahara equation (including the Kawahara, $p=2$ in \eqref{eq:gkawahara}) have been extensively studied in several directions. 

\medskip

There has been a great deal of work on solitary wave solutions of the Kawahara equation in the last fifty years. Compared to the KdV solitary waves, the Kawahara solitary wave solutions exponentially decay to zero as $x \to \infty$ analogously to KdV, while, the Kawahara solitary waves have oscillatory trails, unlike the KdV equation whose solitary waves are non-oscillating. The strong physical background of the Kawahara equation and such similarities and differences between Kawahara and the KdV equations in both the formulations, and the behavior of the solutions propound the mathematical interesting questions of this equation. We refer to, for instance, \cite{Kawahara1972, GP1977, AS1985, HS1988, PRG1988, Boyd1991, KB1991, IS1992, KO1992, Biswas2009} for more informations associated to solitary waves of \eqref{eq:gkawahara} and \cite{Levandosky1999, Levandosky2007, Natali2010, TDK2018, KM2018} for their stability results.

\medskip

The Cauchy problems for the Kawahara and modified Kawahara equations posed on $\R$ have been extensively studied. For the Kawahara equation (\eqref{eq:gkawahara} with $p=2$), we refer to \cite{CT2005, CDT2006, WCD2007, CLMW2009, JH2009, CG2011, Kato2011, Kato2013, Okamoto2017} for the well- and ill-posedness results. As the best result in the sense of the low regularity Cauchy problem, Kato \cite{Kato2011, Kato2013} proved the local well-posedness for $s \ge -2$ by modifying $X^{s,b}$ space, the global well-posedness for $s > -\frac{38}{21}$ and the ill-posedness for $s<-2$ in the sense that the flow map is discontinuous at zero. Recently, Okamoto \cite{Okamoto2017} observed the norm inflation with general initial data, which implies that the flow map of the Kawahara equation is discontinuous everywhere in $H^s(\R)$ with $s < -2$.

\medskip

For the modified Kawahara equation (\eqref{eq:gkawahara} with $p=3$), we refer to \cite{JH2009, CLMW2009, YLY2011, YL2012} for the  the local well-posedness in $H^s(\R)$, $s \ge -1/4$, the global well-posedness in $H^s(\R)$, $s>-3/22$, and the weak ill-posedness in $H^s(\R)$, $s < -\frac14$.

\medskip

Compared with above well-posedness results for the non-periodic problems, there is a few work on the Cauchy problems under the periodic boundary condition. Gorsky and Himonas \cite{GH2009} have first studied the higher-order KdV-type equation of the form 
\[u_t + u_{mx} + uu_x = 0, \quad m=3,5,7, \cdots\]
under the periodic boundary condition. They established the bilinear estimate in $X^{s,\frac12}$, $s \ge -\frac12$ to prove the local well-posedness in $H^{-\frac12}(\T)$. This result was improved by Hirayama \cite{Hirayama2012}. He improved the bilinear estimate established in \cite{GH2009} in $H^s(\T)$ level, $s \ge -\frac{m-1}{4}$ to show the local well-posedness $H^{-\frac{m-1}{4}}$, and this estimate was shown to be sharp in the standard $X^{s,b}$. The global extension of this result was done by Hong and the author \cite{HK2016} via \emph{I-method}. The optimal local well-posedness result in $H^s(\T)$, $s \ge -\frac32$, for the Kawahara equation has been established by Kato \cite{Kato2012} by constructing a modified $X^{s,b}$ space (motivated by Bejenaru and Tao \cite{BT2006}) in order to handle the strong nonlinear interactions appeared when $s < -1$. He also proved the $C^3$-ill-posedness when $s < -\frac32$. 

\subsection*{Organization of this paper} The rest of the paper is organized as follows: In Section \ref{sec:pre}, we give a fundamental observation to study \eqref{eq:kawahara}, and introduce (modified) Takaoka-Tsutsumi's idea adapted to this problem. We also introduce $X^{s,b}$ space and its properties, and provide essential lemmas for the rest of sections. In Section \ref{sec:tri}, we prove the standard trilinear estimates in $X^{s,b}$. In section \ref{sec:energy}, we prove a smoothing property to control the reduced resonance below $H^{\frac12}$, and thus we prove local and global well-posedness results in Section \ref{sec:global}. In Section \ref{sec:unconditional}, as an application of local well-posedness in $L^2$, we show the \emph{unconditional uniqueness} of weak solutions to \eqref{eq:kawahara} in $H^s(\T)$, $s > \frac12$. In Appendices, we provide the proof of $L^4$ Strichartz estimate and a short proof of Theorem \ref{thm:weak ill-posedness} for the sake of the reader's convenience.

\subsection*{Acknowledgments} 
The author would like to thank Soonsik Kwon for a helpful discussion and comments on the notion and well-known argument of \emph{unconditional uniqueness}. Part of this work was complete while the author was visiting KAIST (Daejeon, Republic of Korea) and Chung-Ang University (Seoul, Republic of Korea). The author acknowledges the warm hospitality of both institutions. 

\medskip

The author would like to express his gratitude to anonymous referees for careful reading and the valuable comments, in particular, pointing out an error in the use of a nonlinear transform in the earlier version of manuscript.

\section{Preliminaries}\label{sec:pre}

\subsection{Notations}
Let $x, y \in \R_+$. We use $\lesssim$ when $x \le Cy$ for some $C >0$. Conventionally, $x \sim y$ means $x \lesssim y$ and $y \lesssim x$. $x \ll y$, also, denotes $x \le cy$ for a very small positive constant $c > 0$.

\medskip

Let $f \in \Sch '(\R \times \T) $ be given. $\wt{f}$ or $\ft (f)$ denotes the space-time Fourier transform of $f$ defined by
\[\wt{f}(\tau,n)=\frac{1}{2\pi}\int_{\R}\int_{0}^{2\pi} e^{-ixn}e^{-it\tau}f(t,x)\; dxdt .\]
Then, it is known that the (space-time) inverse Fourier transform is naturally defined as
\[f(t,x)=\frac{1}{2\pi}\sum_{n\in\Z}\int_{\R} e^{ixn}e^{it\tau}\wt{f}(\tau,n)\;dt .\]
Moreover, we use $\ft_x$ (or $\wh{\;}$ ) and $\ft_t$ to denote the spatial and temporal Fourier transform, respectively.

\medskip

For $\lambda \ge 1$, we define rescaled periodic domain $\T_{\lambda}$ and its Fourier domain $\Z_{\lambda}$ by
\[\T_{\lambda}:= \R / (2\pi \lambda \Z) = [0,2\pi\lambda] \quad \mbox{and} \quad  \Z_{\lambda}:= \left\{\frac{m}{\lambda} : m \in \Z\right\}.\]
Then, the following Fourier transform and its properties are well-known (see, for instance, \cite[Section 7]{CKSTT2003}):
\begin{itemize}
\item For a function $f$ defined on $\T_{\lambda}$, set
\[\int_{\T_{\lambda}} f(x) \; dx := \int_0^{2\pi\lambda} f(x) \; dx\]
\item For a function $f$ defined on $\Z_{\lambda}$, set
\[\int_{\Z_{\lambda}} f(n) \; dn := \frac{1}{2\pi\lambda}\sum_{n \in \Z_{\lambda}} f(n) .\]
\item $\ell^p(\Z_{\lambda})$ norm for a function $f$ defined on $\Z_{\lambda}$
\[\norm{f}_{\ell^p(\Z_{\lambda})} :=\left( \int_{\Z_{\lambda}} |f(n)|^p \; dn\right)^{\frac1p}, \quad 1 \le p < \infty, \quad \mbox{and} \quad \norm{f}_{\ell^{\infty}(\Z_{\lambda})} := \sup_{n \in \Z_{\lambda}} |f(n)|.\]
\item Fourier transform for a function $f$ defined on $\T_{\lambda}$
\[\wh{f}(n) := \int_{\T_{\lambda}} e^{-inx} f(x) \; dx, \quad n \in \Z_{\lambda}.\]
\item Fourier inversion formula for a function $f$ defined on $\T_{\lambda}$
\[f(x) := \int_{\Z_{\lambda}} e^{inx} \wh{f}(n) \; dx, \quad x \in \T_{\lambda}.\]
\item Parseval identity
\[\int_{\T_{\lambda}} f(x) \overline{g(x)} \; dx = \int_{\Z_{\lambda}} \wh{f}(n) \overline{\wh{g}(n)} \; dn.\]
\item Plancherel theorem
\[\norm{f}_{L^2(T_{\lambda})} = \norm{\wh{f}}_{\ell^2(\Z_{\lambda})}.\]
\item Convolution
\[\wh{fg}(n) =\int_{\Z_{\lambda}}\wh{f}(n-m)\wh{g}(m)\; dm.\]
\item Derivatives
\[\partial_x^j f (x) = \int_{\Z_{\lambda}}e^{inx}(in)^j \wh{f}(n) \; dn, \quad j \in \Z_{\ge 0}.\]
\item Sobolev space
\[\norm{f}_{H^s(\T_{\lambda})} = \norm{\bra{n}^s \wh{f}}_{\ell^2(\Z_{\lambda})},\]
where $\bra{\cdot} = (1+|\cdot|^2)^{\frac12}$. 
\end{itemize}
We denote $L^p(\T_{\lambda})$, $\ell^p(\Z_{\lambda})$ and $H^s(\T_{\lambda})$ by $L_{\lambda}^p$, $\ell_{\lambda}^p$ and $H_{\lambda}^s$, respectively, unless there is a risk of confusion.
\subsection{Setting}
Taking the Fourier transform to \eqref{eq:kawahara}, one has
\begin{equation}\label{eq:kawahara0}
\pt \wh{v}(n) - ip_0(n) \wh{v}(n) = \frac{\mu i}{3(2\pi)^2}n \sum_{n=n_1+n_2+n_3} \wh{v}(n_1)\wh{v}(n_2)\wh{v}(n_3),
\end{equation}
where 
\[p_*(n) = n^5 + \beta n^3- \gamma n.\]
Note that the resonance function generated by $p_0(n)$ (in the cubic interactions) is given by
\begin{equation}\label{eq:resonance function1}
\begin{aligned}
H_* &= H_*(n_1,n_2,n_3,n) \\
&:= p_*(n) - p_*(n_1) - p_*(n_2) - p_*(n_3)\\
&= \frac52 (n_1+n_2)(n_2+n_3)(n_3+n_1)\left(n_1^2 + n_2^2 + n_3 ^2 + n^2 + \frac65\beta \right)
\end{aligned}
\end{equation}
It is known from \eqref{eq:resonance function1} that the non-trivial resonances appear when $H_* = 0$, equivalently, $(n_1+n_2)(n_2+n_3)(n_3+n_1) = 0$. 

We split the nonlinear term in \eqref{eq:kawahara0} into two parts, and hence we rewrite \eqref{eq:kawahara0} as follows:
\begin{equation}\label{eq:kawahara0-1}
\begin{aligned}
\pt \wh{v}(n) - ip_*(n) \wh{v}(n) =&~{} -\frac{\mu i}{(2\pi)^2} n|\wh{v}(n)|^2 \wh{v}(n) +\frac{\mu i}{2\pi}n \left(\frac{1}{2\pi}\sum_{n' \in \Z} |\wh{v}(n')|^2 \right)\wh{v}(n)\\
&~{}+\frac{\mu i}{3(2\pi)^2}n \sum_{\N_n} \wh{v}(n_1)\wh{v}(n_2)\wh{v}(n_3),
\end{aligned}
\end{equation}
where $\N_n$ is the set of frequencies (with respect to the fixed frequency $n$), for which the relations of frequencies never generate the resonance, given by
\[\N_n = \set{(n_1,n_2,n_3) \in \Z^3 : n_1+n_2+n_3 = n, \; (n_1+n_2)(n_2+n_3)(n_3+n_1) \neq 0}.\]
We call the first two terms in the right-hand side of \eqref{eq:kawahara0-1} (non-trivial) \emph{resonant terms} and the rest \emph{non-resonant term}. 

\begin{remark}
Compared to the non-periodic problem, such resonant terms are one of obstacles to study the "low regularity" local theory of periodic dispersive equations, while the (exact) resonant phenomena can be never seen in the non-periodic dispersive equation, since the set of frequencies, which generate the resonance (in this case $(n_1+n_2)(n_2+n_3)(n_3+n_1) =0$), is a measure zero set.
\end{remark}

The $L^2$ conservation law \eqref{eq:L2} enables us to get rid of the second term in the resonant terms, so that we reduce \eqref{eq:kawahara0-1} by 
\begin{equation}\label{eq:kawahara1}
\pt \wh{v}(n) - ip_0(n) \wh{v}(n) = -\frac{\mu i}{(2\pi)^2} n|\wh{v}(n)|^2 \wh{v}(n) +\frac{\mu i}{3(2\pi)^2}n \sum_{\N_n} \wh{v}(n_1)\wh{v}(n_2)\wh{v}(n_3),
\end{equation}
where
\begin{equation}\label{p(n)}
p_0(n) = n^5 + \beta n^3- \left(\gamma +\frac{\mu}{2 \pi}  \norm{v_{0}}_{L^2}^2\right) n.
\end{equation}

\begin{remark}\label{rem:gauge}
One can use the Gauge transform defined by
\begin{equation}\label{eq:gauge}
\mathcal{G}[v](t) := e^{i\mu t\oint |v(t,x)|^2 \; dx}v(t),
\end{equation}
where $\oint f \; dx = \frac{1}{2\pi} \int_0^{2\pi} f \; dx$, in addition to the $L^2$ conservation law, to reduce \eqref{eq:kawahara} by the renormalized (or Wick ordered) modified Kawahara equation
\begin{equation}\label{eq:RKE}
\pt v - \px^5 v + \beta \px^3 v + \gamma \px v - \mu \left(v^2 - \oint v^2 \; dx \right)\px v = 0.
\end{equation}
It is well-known that the Gauge transform \eqref{eq:gauge} is well-defined and invertible when $s \ge 0$, thanks to $L^2$ conservation law \eqref{eq:L2}. Moreover, this reduction \eqref{eq:RKE} is identical to \eqref{eq:kawahara1} in the sense that no more dispersive smoothing effect arises in the cubic interactions. 
\end{remark}

\begin{remark}
It is clear that the resonant term $i n|\wh{v}(n)|^2 \wh{v}(n)$ has an effect on the solution in the sense that the solution oscillates rapidly so that the uniform continuity of the solution map breaks in $H^s(\T)$, $s < \frac12$. In fact, the estimate of this term is valid for $s \ge \frac12$ (see Lemma \ref{lem:resonant1} below), thus the local well-posedness of \eqref{eq:kawahara} is naturally expected to hold at this regularity.
\end{remark}

We denote the resonant and the non-resonant terms in \eqref{eq:kawahara1} by $\N_R(v)$ and $\N_{NR}(v)$, respectively, and these can be generally defined by
\begin{equation}\label{eq:resonant term}
\N_R(v_1,v_2,v_3) = \ft_x^{-1}\left[-\frac{\mu i}{(2\pi)^2} n \wh{v}_1(n)\wh{v}_2(-n)\wh{v}_3(n) \right] 
\end{equation}
and
\begin{equation}\label{eq:non-resonant term}
\N_{NR} (v_1,v_2,v_3) =  \ft_x^{-1}\left[\frac{\mu i}{3(2\pi)^2}n \sum_{\N_n} \wh{v}_1(n_1)\wh{v}_2(n_2)\wh{v}_3(n_3) \right].
\end{equation}

The modified linear operator in \eqref{eq:kawahara1} (defined by $p_0(n)$ in the Fourier mode) generates another cubic resonance function given by
\begin{equation}\label{eq:resonance function2}
\begin{aligned}
H_0 &= H_0(n_1,n_2,n_3,n) \\
&:= p_0(n) - p_0(n_1) - p_0(n_2) - p_0(n_3)\\
&= \frac52 (n_1+n_2)(n_2+n_3)(n_3+n_1)\left(n_1^2 + n_2^2 + n_3 ^2 + n^2 + \frac65\beta \right) = H_*.
\end{aligned}
\end{equation}
It is noted that the resonance function \eqref{eq:resonance function2} is identical to \eqref{eq:resonance function1}, since the first-order linear operator does not produce the dispersive effect as mentioned in Remark \ref{rem:gauge}.

\medskip

The standard Fourier restriction norm method ensures the local well-posedness of \eqref{eq:kawahara} in $H^s(\T)$, $s \ge \frac12$. It follows from the resonance and non-resonance estimates at such regularities, see Lemmas \ref{lem:resonant1} and \ref{lem:nonres-trilinear}.

\medskip

On the other hand, the main purpose of this paper (as seen in Section \ref{sec:intro}) is to show the well-posedness of \eqref{eq:kawahara} below $H^{\frac12}(\T)$. In view of Lemma \ref{lem:resonant1} (compared to Lemma \ref{lem:nonres-trilinear}), one can see that the resonant term $\N_{R}(v)$ in \eqref{eq:kawahara1} prevents the regularity threshold from going down below $\frac12$. 

\medskip

Takaoka and Tsutsumi \cite{TT2004} introduced new idea to weaken the nonlinear perturbation of the form $in|\wh{u}(n)|^2\wh{u}(n)$ in the context of modified KdV equation. We briefly explain what the idea is. The evolution operator $\mathcal{V}(t)$ given by
\[\mathcal{V}(t)v := \frac{1}{2\pi} \sum_{n \in \Z} e^{i(nx + tp_1(n) - \mu n\int_0^t |\wh{v}(s,n)|^2 \; ds)}\wh{v}(n)\]
can completely remove whole non-trivial resonance $\N_{R}(v)$ in \eqref{eq:kawahara1}, while the nonlinear oscillation factor $e^{-i\mu \int_0^t |\wh{v}(s,n)|^2 \; ds}$ itself is difficult to be dealt with, in particular, in the uniqueness part (see Theorem 1.1 in \cite{NTT2010} for the existence result), since the oscillation factor contains the solution to be estimated. Instead, by choosing the first approximation of $\mathcal{V}(t)$ given by
\[\mathcal{W}(t)v := \frac{1}{2\pi} \sum_{n \in \Z} e^{i(nx + tp_1(n) +  t n|\wh{v}_0(n)|^2)}\wh{v}(n),\]
we further reduce \eqref{eq:kawahara1} to
\begin{equation}\label{eq:kawahara la=1}
\pt \wh{v}(n) - ip(n) \wh{v}(n) = -\frac{\mu i}{(2\pi)^2} n(|\wh{v}(n)|^2 - |\wh{v}_0(n)|^2) \wh{v}(n) + \frac{\mu i}{3(2\pi)^2}n \sum_{\N_n} \wh{v}(n_1)\wh{v}(n_2)\wh{v}(n_3),
\end{equation}
where 
\begin{equation}\label{eq:RF}
p(n) = n^5 + \beta n^3-\left(\gamma +\frac{\mu}{2 \pi }  \norm{v_{0}}_{L^2}^2\right) n -\frac{\mu n}{(2\pi)^2} |\wh{v}_{0}(n)|^2.
\end{equation}
Then, the authors proved (in the context of modified KdV) a kind of smoothing effect (to control the reduced resonant term $n(|\wh{v}(n)|^2 - |\wh{v}_0(n)|^2)$), and thus showed the local well-posedness below $H^{\frac12}(\T)$.

\begin{remark}\label{rem:negligible factor}
One can immediately check that the resonance function \eqref{eq:resonance function2} is roughly bounded above by $\max(|n_1|^3, |n_2|^3, |n_3|^3, |n|^3)$\footnote{This upper bound is the weakest dispersive effect arising in the \emph{high-high-high} to \emph{high} interactions.} on the set $\N_n$. On the other hand, $|n||\wh{v}_0(n)|^2$ is much less than $|n|^3$, when $v_0 \in H^s(\T)$ for $-1<s$. These observations ensure that the new resonant function generated by $p(n)$ shows the same effect as \eqref{eq:resonance function2} in the analysis, in other words, the factor $n|\wh{v}_0(n)|^2$ is negligible compared to \eqref{eq:resonance function2} for $s > -1$.
\end{remark}

The standard nonlinear estimates and a kind of smoothing effect ensure to attain the local well-posedness in $L^2$ (see Sections \ref{sec:tri} and \ref{sec:energy}), provided that $L^4$ estimate is valid for $L^2$ data. In view of the proof of $L^4$ estimate in \cite{TT2004}, one can see that the proof basically relies on the smallness of $|\wh{v}_0(n)|^2$ \footnote{Thus, $L^4$ estimate holds true for $s > 0$ depending only on $\norm{v_0}_{H^s}$. This is important, because the local well-posedness follows from the compactness argument, so that the implicit constants in everywhere do not be required to depend on $v_0$.}, thus, the scaling argument enables us to  avoid the lack of $L^4$ estimate for $L^2$ data. It becomes another obstacle to obtain the global well-posedness of \eqref{eq:kawahara} in $L^2(\T)$, in a sharp contrast to others \cite{TT2004, NTT2010, MT2018, MPV2018, OW2018, Kwak2018-1}. 

\medskip

Let $\lambda \ge 1$. The same argument for \eqref{eq:scaled kawahara} gives
\begin{equation}\label{eq:lambda-kawahara}
\begin{aligned}
\pt \wh{v}_{\lambda}(n) - ip_{\lambda}(n) \wh{v}_{\lambda}(n) =&~{} -\frac{\mu i n}{(2\pi \lambda)^2}(|\wh{v}_{\lambda}(n)|^2 - |\wh{v}_{\lambda ,0}(n)|^2) \wh{v}_{\lambda}(n) \\
&+ \frac{\mu i}{3(2\pi \lambda)^2}n \sum_{\N_n^{\lambda}} \wh{v}_{\lambda}(n_1)\wh{v}_{\lambda}(n_2)\wh{v}_{\lambda}(n_3),
\end{aligned}
\end{equation}
where 
\[p_{\lambda}(n) =  n^5 + \beta \lambda^{-2}n^3- \left(\gamma +\frac{\mu}{2 \pi \lambda}  \norm{v_{\lambda,0}}_{L_{\lambda}^2}^2\right) n -\frac{\mu n}{(2\pi \lambda)^2} |\wh{v}_{\lambda, 0}(n)|^2,\]
and
\[\N_n^{\lambda} = \set{(n_1,n_2,n_3) \in \Z_{\lambda}^3 : n_1+n_2+n_3 = n, \; (n_1+n_2)(n_2+n_3)(n_3+n_1) \neq 0}.\]
Note that $p_{\la}(n) = p(n)$ for $p(n)$ as in \eqref{eq:RF}, when $\la = 1$. The resonant function $H_{\la}$ generated by $p_{\la}(n)$ is explicit by
\begin{equation}\label{eq:resonant function la}
\begin{aligned}
H_{\la} &= H_{\la}(n_1,n_2,n_3,n) \\
&:= p_{\la}(n) - p_{\la}(n_1) - p_{\la}(n_2) - p_{\la}(n_3)\\
&= \frac52 (n_1+n_2)(n_2+n_3)(n_3+n_1)\left(n_1^2 + n_2^2 + n_3 ^2 + n^2 + \frac65\beta\la^{-2} \right) \\
&-\frac{\mu n}{(2\pi \lambda)^2} |\wh{v}_{\lambda, 0}(n)|^2 + \sum_{j=1}^3 \frac{\mu n_j}{(2\pi \lambda)^2} |\wh{v}_{\lambda, 0}(n_j)|^2\\
=:&~{} H_{\la, 0} -\frac{\mu n}{(2\pi \lambda)^2} |\wh{v}_{\lambda, 0}(n)|^2 + \sum_{j=1}^3 \frac{\mu n_j}{(2\pi \lambda)^2} |\wh{v}_{\lambda, 0}(n_j)|^2.
\end{aligned}
\end{equation}

For the sake of simplicity, we replace $v_{\lambda}$ by $u$ for a $2\pi \la$-periodic function $u$ with $u_0 = v_{\lambda, 0}$, if there is no risk of confusion. Then, $u$ solves
\begin{equation}\label{eq:lambda-kawahara_u}
\begin{aligned}
\pt \wh{u}(n) - ip_{\lambda}(n) \wh{u}(n) =&~{} -\frac{\mu i n}{(2\pi \lambda)^2}(|\wh{u}(n)|^2 - |\wh{u}_0(n)|^2) \wh{u}(n) \\
&+ \frac{\mu i}{3(2\pi \lambda)^2}n \sum_{\N_n^{\lambda}} \wh{u}(n_1)\wh{u}(n_2)\wh{u}(n_3).
\end{aligned}
\end{equation}
We denote the non-resonant term in \eqref{eq:lambda-kawahara_u} by $\N_{NR}^{\la}(u)$, and it can be generally defined by
\[\N_{NR}^{\la} (u_1,u_2,u_3) =  \ft_x^{-1}\left[\frac{\mu i}{3(2\pi\la)^2}n \sum_{\N_n^{\la}} \wh{u}_1(n_1)\wh{u}_2(n_2)\wh{u}_3(n_3) \right].\]

We end this section with three remarks.
\begin{remark}\label{rem:negligible factor la}
Remark \ref{rem:negligible factor} is available for $\la$-scaled system \eqref{eq:lambda-kawahara}. Indeed, $\frac{n}{\la^2}|\wh{v}_{\la,0}|^2$ in the resonant function $H_{\la}$ as in \eqref{eq:resonant function la} is negligible compared to $H_{\la,0}$ for $s > -1$.
\end{remark}
 
\begin{remark}
The key in the reduction of the non-trivial resonance here, is that the (reduced) resonant term, in particular, $\frac{n}{\la^2}(|\wh{u}(n)|^2 - |\wh{u}_0(n)|^2)$, has a smoothing effect (see Corollary \ref{cor:main}). Indeed, using \eqref{eq:lambda-kawahara_u}, one has
\begin{equation}\label{eq:reduction}
\frac{n}{\la^2}(|\wh{u}(n)|^2 - |\wh{u}_0(n)|^2) = \frac{C}{\la^4} \mbox{Im} \left[ \int_0^t \sum_{\N_n^{\la}} n^2 \wh{u}(s,n_1)\wh{u}(s,n_2)\wh{u}(s,n_3)\wh{u}(s,-n) \; ds\right],
\end{equation}
for some constant $C \in R$. The smoothing effect occurs due to the highly non-resonant structure, stronger than the loss of regularities in \eqref{eq:reduction}. 
\end{remark}

\begin{remark}
The $\mathcal{F}\ell^1$-smoothing estimate loses the (logarithmic) derivative compared to the $\mathcal{F}\ell^{\infty}$-smoothing effect. In other word, in \cite{TT2004, NTT2010},  the $\mathcal{F}\ell^1$-smoothing estimate to control $n(|\wh{u}(n)|^2 - |\wh{u}_0(n)|^2)$ has been shown for $\frac13 < s < \frac12$, in the context of modified KdV equation, see also \cite{MT2018, OW2018}. In contrast with this, the $\mathcal{F}\ell^{\infty}$-smoothing effect (Corollary \ref{cor:main}) holds even in the end point regularity, see also \cite{Kwak2018-1, MPV2018}. Among other works, this observation is significant and the $\mathcal{F}\ell^{\infty}$-smoothing effect is essential in this work in the sense that we obtain the local well-posedness in $L^2$, and so global well-posedness in $L^2$. This observation may recover the lack of the well-posedness at the end point regularity in \cite{TT2004, NTT2010, MT2018, OW2018}.
\end{remark}

\begin{remark}
The scaling argument is not necessary for the local well-posedness in $H^s$, $s >0$ (see Remark \ref{rem:s>0}). However, we use it to prove the local well-posedness in $H^s$, $s < \frac12$, since the $L^2$ local well-posedness (thus, global well-posedness in $L^2$) is the principal aim in this work.
\end{remark}

\subsection{Function spaces}
We, in this section, introduce the $X^{s,b}$ space, which was first proposed by Bourgain \cite{Bourgain1993} to solve the periodic NLS and generalized KdV. Later, for three decades, many mathematicians, in particular,  Kenig, Ponce and Vega \cite{KPV1996} and Tao \cite{Tao2001}, have further developed.

\medskip 

For $\lambda \ge 1$, define the $X^{s,b}(\R \times \T_{\lambda})$ (shortly denoted by $X_{\lambda}^{s,b}$) space as the closure of Schwartz functions $\Sch_{t,x}(\R \times \T_{\lambda})$ under the norm 
\begin{equation}\label{eq:Xsb}
\norm{f}_{X_{\lambda}^{s,b}}^2 = \int_{\Z_{\lambda}}\int_{\R} \bra{n}^{2s} \bra{\tau-p_{\lambda}(n)}^{2b}|\wt{f}(\tau,n)|^2  \; d\tau\; dn,
\end{equation}
which is equivalent to the expression $\norm{e^{it p_{\lambda}(-i\px)}f(t,x)}_{H_t^bH_{\lambda}^s}$.

\medskip

For the high regularity ($s \ge \frac12$) local well-posedness, we define the $Y^{s,b}$ space (corresponding to \eqref{eq:kawahara1}) as the closure of Schwartz functions $\Sch_{t,x}(\R \times \T)$ under the norm 
\[\norm{f}_{Y^{s,b}}^2 = \frac{1}{2\pi}\sum_{n \in \Z} \int_{\R} \bra{n}^{2s} \bra{\tau-p_0(n)}^{2b}|\wt{f}(\tau,n)|^2 \; d\tau,\]
which is equivalent to the expression $\norm{e^{itp_0(-i\px)}f(t,x)}_{H_t^bH_x^s}$.

\medskip

Let $0 < T \le 1$. The time localization of $Y^{s,b}$ denoted by $Y_T^{s,b}$ is given by
\[Y_T^{s,b} = \set{f \in \mathcal{D}'((-T,T) \times \T) : \norm{f}_{Y_T^{s,b}} < \infty},\]
equipped with the norm
\[\norm{f}_{Y_T^{s,b}} = \inf \set{\norm{g}_{Y^{s,b}} : g \in Y^{s,b}, \; g \equiv f \; \mbox{on} \; (-T, T)}.\]

For a cut-off function $\psi$ given by
\[\psi \in C_0^{\infty}(\mathbb{R}) \quad \mbox{such that} \quad 0 \le \psi \le1, \quad  \psi \equiv 1 \; \mbox{ on } \; [-1,1], \quad \psi \equiv 0, \; |t| \ge 2,\]
we fix the time localized function 
\[\psi_T(t) = \psi (t/T), \qquad 0 < T < 1.\]

The following lemma provides the selective properties of $X_{\la}^{s,b}$ and $Y^{s,b}$ spaces.
\begin{lemma}[Properties of $X^{s,b}$ and $Y^{s,b}$, \cite{GTV1997, CKSTT2003, Tao2006}]\label{lem:Xsb}
Let $\lambda \ge 1$ $0 < T \le 1$, $s \in \R$ and $b > \frac12$. We have
\begin{enumerate}
\item (Embedding) For any $u \in X^{s,b}(\R \times \T_{\lambda})$,
we have
\[\norm{u}_{C_t^0H^s(\R \times \T_{\lambda})} \lesssim_b \norm{u}_{X_{\lambda}^{s,b}}.\]
Similarly, for any $u \in Y^{s,b}(\R \times \T)$\footnote{One can extend the domain $\R \times \T$ to $\R \times \mathcal{Z}$, where $\mathcal{Z} = \R^d$ or $\T^d$, $d \ge 1$.}, we have
\[\norm{u}_{C_t^0H^s(\R \times \T)} \lesssim_b \norm{u}_{Y^{s,b}}.\]
\item ($X^{s,b}$-type energy estimate)  For any functions $u \in \mathcal{S}_{t,x}(\R \times \T_{\lambda})$ satisfying 
\[\pt \wh{u}(n) - ip_{\lambda}(n)\wh{u}(n) = \wh{F}(u)(n),\]
we have
\[\norm{\psi(t)u}_{X_{\lambda}^{s,b}} \lesssim_{\psi, b} \norm{u_{0}}_{H_{\lambda}^s} + \norm{F}_{X_{\lambda}^{s,b-1}}.\]
Moreover, for $-\frac12 < b' < 0 < b < b'+1$ and any functions $u \in \mathcal{S}_{t,x}(\R \times \T)$ satisfying 
\[\pt \wh{u}(n) - ip_0(n)\wh{u}(n) = \wh{F}(u)(n),\]
\eqref{eq:kawahara}, we have
\[\norm{\psi_Tu}_{Y^{s,b}} \lesssim_{\psi, b} T^{\frac12-b}\norm{u_{0}}_{H^s} + T^{b'-b+1}\norm{F(u)}_{Y^{s,b'}}.\]
\end{enumerate}
\end{lemma}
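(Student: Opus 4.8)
The plan is to exploit the norm equivalence $\norm{u}_{X_\la^{s,b}}\sim\norm{e^{itp_\la(-i\px)}u}_{H_t^bH_\la^s}$ recorded just after \eqref{eq:Xsb} (and its analogue for $Y^{s,b}$ with $p_0$ in place of $p_\la$), which reduces every assertion to a one-dimensional-in-time statement in which $n$ enters only as a Fourier parameter carrying the weight $\bra{n}^s$; the constants then come out uniform in $\la\ge1$ because $\la$ never touches the time variable. I treat $X_\la^{s,b}$ and $Y^{s,b}$ in parallel throughout.

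For the embedding in (1), I fix $n$ and estimate $\sup_t|\wh u(t,n)|$. Writing $\wh u(t,n)=\int_\R e^{it\tau}\wt u(\tau,n)\,d\tau$ gives $\sup_t|\wh u(t,n)|\le\int_\R|\wt u(\tau,n)|\,d\tau$, and Cauchy--Schwarz against the weight $\bra{\tau-p_\la(n)}^{-b}$ yields
\[
\sup_t|\wh u(t,n)| \le \Big(\int_\R \bra{\tau-p_\la(n)}^{-2b}\,d\tau\Big)^{1/2}\Big(\int_\R \bra{\tau-p_\la(n)}^{2b}|\wt u(\tau,n)|^2\,d\tau\Big)^{1/2}.
\]
Since $b>\tfrac12$ the first factor is finite and $\lesssim_b1$. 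Multiplying by $\bra{n}^s$, squaring, and integrating in $n$ gives $\norm{u}_{C_t^0H_\la^s}\lesssim_b\norm{u}_{X_\la^{s,b}}$, while continuity in $t$ follows from dominated convergence applied to the same integral representation; the $Y^{s,b}$ statement is identical with $p_0$.

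For the first estimate in (2) I use Duhamel's formula and pass to the interaction representation $w:=e^{-itp_\la(-i\px)}u$, which converts $\pt\wh u(n)-ip_\la(n)\wh u(n)=\wh F(n)$ into $\pt\wh w(t,n)=\wh G(t,n)$ with $G:=e^{-itp_\la(-i\px)}F$. By the norm equivalence, $\norm{\psi u}_{X_\la^{s,b}}\sim\norm{\psi w}_{H_t^bH_\la^s}$ and $\norm{F}_{X_\la^{s,b-1}}\sim\norm{G}_{H_t^{b-1}H_\la^s}$, so the claim reduces, frequency by frequency, to the two one-dimensional facts $\norm{\psi(t)c}_{H_t^b}\lesssim_{\psi,b}|c|$ for the constant piece coming from $u_0$, and the Duhamel-in-time bound $\big\|\psi(t)\int_0^t h(s)\,ds\big\|_{H_t^b}\lesssim_{\psi,b}\norm{h}_{H_t^{b-1}}$. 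The latter is proved by splitting $\wt F$ (hence $h$) into its large-modulation part $\bra{\tau-p_\la(n)}\gtrsim1$ and its small-modulation part $\bra{\tau-p_\la(n)}\lesssim1$: the former is controlled directly, and for the latter one Taylor-expands $e^{it\tau}$ and sums term by term, which is the regime where $b<1$ is used.

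The time-localized estimate follows the same decomposition with $\psi_T=\psi(\cdot/T)$, the powers of $T$ being extracted from the scaling of the cutoff. The homogeneous contribution reduces to $\norm{\psi_T}_{H_t^b}\lesssim_{\psi,b}T^{\frac12-b}$ after factoring out the time-independent $u_0$, giving the stated $T^{\frac12-b}\norm{u_0}_{H^s}$; the Duhamel contribution reduces to $\big\|\psi_T\int_0^t h\,ds\big\|_{H_t^b}\lesssim T^{b'-b+1}\norm{h}_{H_t^{b'}}$, obtained by interpolating the $b'$- and $b$-level bounds and tracking the gain $b'-b+1>0$ produced by $\int_0^t$ under the rescaling $t\mapsto t/T$, which is precisely where $-\tfrac12<b'<0<b<b'+1$ enters. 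I expect the main obstacle to be this Duhamel-in-time estimate with sharp $T$-powers: the boundary term from $\int_0^t$ forces the splitting about the resonant hyperplane $\tau=p_\la(n)$, and producing the genuine gain $T^{b'-b+1}$ (rather than a merely $T$-independent bound) requires the careful interpolation between modulation regimes together with the constraint $b'>-\tfrac12$, which keeps the relevant $\tau$-integrals convergent.
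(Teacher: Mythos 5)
The paper does not prove this lemma at all: it is quoted as a standard result with citations to Ginibre--Tsutsumi--Velo, Colliander--Keel--Staffilani--Takaoka--Tao, and Tao's book, so there is no in-paper argument to compare against. Your proposal correctly reconstructs exactly the standard proof found in those references: the $b>\frac12$ Cauchy--Schwarz argument in $\tau$ for the embedding (with uniformity in $\lambda$ coming from translation invariance of $\int_\R \bra{\tau-p_{\lambda}(n)}^{-2b}\,d\tau$), and the reduction via the interaction representation to the one-dimensional facts $\norm{\psi(t)c}_{H^b_t}\lesssim|c|$ and $\normo{\psi(t)\int_0^t h}_{H^b_t}\lesssim\norm{h}_{H^{b-1}_t}$, proved by the large/small modulation splitting and Taylor expansion, with the $T$-powers extracted from the scaling of the cutoff in the localized version. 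One minor correction: the untruncated estimate $\norm{\psi u}_{X_{\lambda}^{s,b}}\lesssim\norm{u_0}_{H_{\lambda}^s}+\norm{F}_{X_{\lambda}^{s,b-1}}$ requires no upper bound on $b$ (the Taylor-expansion part converges for every $b$ since $\psi$ is compactly supported and smooth); the constraint $b<b'+1$ is genuinely needed only in the $T$-localized estimate, where it produces the positive power $T^{b'-b+1}$.
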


\subsection{Basic estimates}
This section devotes to the introduction of some lemmas, which will be essentially used for our analysis. 
\begin{lemma}[$L^4$-Strichartz estimate]\label{lem:L4}
Let $\lambda \ge 1$. Assume that $\norm{v_{\lambda,0}}_{L_{\la}^2} \le \rho$, for a sufficient small $0 < \rho \ll 1$, and $b > \frac{3}{10}$. Let the norm $X_{\lambda}^{0,b}$ be defined as in \eqref{eq:Xsb} for $v_{\lambda,0}$. Then,
\[\norm{f}_{L_{t,x}^4(\R \times \T_{\lambda})} \lesssim \norm{f}_{X_{\lambda}^{0,b}}.\]
for any Schwartz function $f$ on $\R \times \T_{\lambda}$. The implicit function does not depend on $\lambda$.
\end{lemma}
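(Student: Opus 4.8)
The plan is to deduce the $L^4$ bound from a bilinear $L^2$ estimate, combined with a dyadic decomposition in the modulation variable and a $\la$-uniform lattice-counting estimate for the fifth-order resonance function.

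First I would reduce to a bilinear estimate. Since $\norm{f}_{L^4_{t,x}}^2 = \norm{f^2}_{L^2_{t,x}}$ for complex $f$, it suffices to prove $\norm{fg}_{L^2} \lesssim \norm{f}_{X_\la^{0,b}}\norm{g}_{X_\la^{0,b}}$ and set $g=f$. Passing to the space-time Fourier side, Plancherel and convolution give $\wt{fg}(\tau,n) = \int_{\Z_\la}\int_{\R}\wt f(\tau_1,n_1)\wt g(\tau-\tau_1,n-n_1)\,d\tau_1\,dn_1$; writing $F(\tau_1,n_1)=\bra{\tau_1-p_\la(n_1)}^{b}|\wt f(\tau_1,n_1)|$ so that $\norm{F}_{L^2}=\norm{f}_{X_\la^{0,b}}$, and likewise $G$ for $g$, the problem becomes a weighted convolution bound. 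Since $b<\tfrac12$, the naive Cauchy--Schwarz in the full variable $(\tau_1,n_1)$ with the whole kernel diverges in $\tau_1$, so the refined dyadic argument below is genuinely needed.

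Second, I would decompose $f=\sum_{L_1}f_{L_1}$ and $g=\sum_{L_2}g_{L_2}$ dyadically in the modulation $\bra{\tau-p_\la(n)}\sim L_i$, so $\norm{f_{L_1}}_{X_\la^{0,b}}\sim L_1^{b}\norm{f_{L_1}}_{L^2}$, and assume by symmetry $L_1\le L_2$. For fixed output $(\tau,n)$ the $(\tau_1,n_1)$-integration is restricted to the set $E_{\tau,n}$ where $\bra{\tau_1-p_\la(n_1)}\sim L_1$ and $\bra{\tau-\tau_1-p_\la(n-n_1)}\sim L_2$. Cauchy--Schwarz on $E_{\tau,n}$ followed by Fubini gives $\norm{f_{L_1}g_{L_2}}_{L^2}\lesssim (\sup_{\tau,n}|E_{\tau,n}|)^{1/2}\norm{f_{L_1}}_{L^2}\norm{g_{L_2}}_{L^2}$, reducing everything to the measure $|E_{\tau,n}|$ (with $dn_1=\tfrac{1}{2\pi\la}\sum$). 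The heart of the matter is this counting step. The two modulation constraints confine $\tau_1$ to an interval of length $\lesssim \min(L_1,L_2)=L_1$, and force the resonance $\Phi(n_1,n-n_1):=p_\la(n_1)+p_\la(n-n_1)-p_\la(n)$ into an interval of length $\lesssim L_2$. A direct computation shows the leading part of $\Phi$ is the degree-five polynomial $-5\,n_1 n_2\, n\,(n_1^2+n_1 n_2+n_2^2)$ with $n_2=n-n_1$, whose $n_1$-derivative $p_\la'(n_1)-p_\la'(n-n_1)$ has size $\sim |n|(n_1^2+n_2^2)|n_1-n_2|$ away from the diagonal. Counting points of $\Z_\la$ (spacing $\la^{-1}$) in a sublevel set of this polynomial, the factor $\la$ from the lattice density cancels the $\la^{-1}$ in $dn_1$, which is exactly the mechanism producing the $\la$-uniform constant. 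The degenerate region $n_1\approx n_2$ (where $\Phi'$ is small) is handled by a polynomial sublevel-set bound, and the cases $n_1=0$, $n-n_1=0$, or $n=0$ are trivial. Crucially, the non-polynomial piece $-\tfrac{\mu n}{(2\pi\la)^2}|\wh v_{\la,0}(n)|^2$ of $p_\la$ is, by the smallness hypothesis $\norm{v_{\la,0}}_{L^2_\la}\le\rho$, a lower-order perturbation that does not destroy the monotonicity of $\Phi$ (cf. Remark \ref{rem:negligible factor la}); this is precisely where $\rho\ll1$ enters, and why the main argument first rescales to small data.

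Finally, inserting the resulting bound on $|E_{\tau,n}|$ and summing the geometric series over the dyadic modulations $L_1\le L_2$ converges exactly when $b>\tfrac{3}{10}$, the stronger fifth-order dispersion yielding a threshold below the KdV value $\tfrac13$. I expect the counting step to be the main obstacle: securing the sharp, $\la$-independent lattice-point bound for the degree-five resonance, controlling the near-diagonal degeneracy, and absorbing the data-dependent perturbation term simultaneously is the delicate part, while the bilinear reduction and the dyadic summation are routine.
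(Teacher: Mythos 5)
Your overall scheme (bilinear reduction, dyadic decomposition in modulation, Cauchy--Schwarz plus a lattice count of resonance sublevel sets) is a legitimate alternative to the paper's argument, which instead integrates out $\tau_1$ against the kernel $\bra{\tau_1-p_{\la}(n_1)}^{-2b}\bra{\tau-\tau_1-p_{\la}(n-n_1)}^{-2b}$ and then bounds a single weighted sum over $n_1$ using the convexity/factorization of the quartic $h$. But there is a genuine gap exactly at the step you wave off as ``trivial'': the interaction of positive with negative input frequencies, i.e.\ output frequency $n$ small compared to the inputs. Since $p_{\la}$ is odd, for output $n=0$ (unavoidable once you set $g=f$, through the pairs $(n_1,-n_1)$) the resonance $\Phi(n_1,n-n_1)=p_{\la}(n_1)+p_{\la}(-n_1)$ vanishes \emph{identically}, so your set $E_{\tau,0}$ contains every $n_1$ and has infinite measure: the counting step yields nothing, and no ``polynomial sublevel-set bound'' can help because the polynomial is constant in $n_1$. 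For $0<|n|\ll 1$ (recall $n\in\Z_{\la}$ can equal $\la^{-1}$) the leading coefficient of $\Phi$ is $5n$, the sublevel measure is $\sim (L_2/|n|)^{1/4}$, and the count $\sim\la(\la L_2)^{1/4}$ is not $\la$-uniform; even at $|n|\sim 1$ the best uniform count is $\la L_2^{1/4}$, and feeding that into your dyadic sum gives $\sum_{L_2} L_2^{5/8-2b}$, i.e.\ convergence only for $b>5/16$, not $b>3/10$. So your closing assertion that the series ``converges exactly when $b>3/10$'' is unsubstantiated and, as the argument stands, false.

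What is missing is precisely the structural reduction the paper performs \emph{before} any counting. It splits $f=f_1+f_2$ with $\wh f_1$ supported in $|n|\le 1$ and handles $f_1^2$, and likewise the bounded-output-frequency part of $f_2^2$, by Cauchy--Schwarz in $\tau_1$ alone: there the lack of resonance gain is compensated by the fact that the relevant frequency set has measure $O(1)$ in the normalized counting measure, and only $b>1/4$ is needed. It then splits $f_2=f_{2,1}+f_{2,2}$ into negative- and positive-frequency halves and uses the triangle inequality $\norm{f_2}_{L_{t,x}^4}\le\norm{f_{2,1}}_{L_{t,x}^4}+\norm{f_{2,2}}_{L_{t,x}^4}$, so that in $f_{2,2}^2$ both inputs satisfy $n_1,\,n-n_1>1$, hence $n>2$ and $|y|<n/2$ where $y=n_1-n/2$. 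On that region the resonance behaves like $5n\,(y^4+n^2y^2)\sim n^3y^2$, the count is $\lesssim \la\min\bigl((L_2/n^3)^{1/2},\,n\bigr)+O(1)\lesssim \la L_2^{1/5}+O(1)$, and your dyadic summation then closes exactly for $b>3/10$ (the $O(1)$ term needs only $b>1/4$). In short, your counting and summation machinery is sound, but the same-sign/low-frequency splitting is not a routine preliminary: it is what makes the resonance non-degenerate, what produces the $\la$-uniform constant, and what produces the exponent $3/10$; without it your argument collapses at small output frequencies and delivers at best $b>5/16$.
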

\begin{proof}
The $L^4$-type estimate was first introduced by Bourgain \cite{Bourgain1993}, for which the local well-posedness of periodic NLS and gKdV equations have been proved. The $L^4$ estimate plays an important role to compensate for the lack of smoothing effect under the periodic setting. The proof is analogous to one in \cite{TT2004}. We leave the proof in Appendix \ref{sec:L4}.
\end{proof}

\begin{remark}\label{rem:s>0}
Lemma \ref{lem:L4} is valid, when $\la = 1$ and $v_{0} \in H^s$, $s > 0$, which facilitates that nonlinear estimate and the smoothing effect established in Sections \ref{sec:tri} and \ref{sec:energy} holds true for $s >0$ without smallness condition of initial data. This fact guarantees the local well-posedness in $H^s$, $s > 0$, without the scaling argument.
\end{remark}

\begin{remark}
The scaling argument ensures the smallness of $\norm{v_{\la,0}}_{L^2}$, for a sufficiently large $\la \gg 1$, whenever $v_{\la,0} \in H_{\la}^s$, $s \ge 0$, see \eqref{eq:scaling argument}. 
\end{remark}

\begin{remark}
Lemma \ref{lem:L4} is valid for $Y^{s,b}$ functions, i.e., we have 
\[\norm{f}_{L_{t,x}^4(\R \times \T)} \lesssim \norm{f}_{Y^{0,b}},\]
for any Schwartz function $f$ on $\R \times \T$ and $b \ge \frac{3}{10}$.
\end{remark}

\begin{lemma}[Sobolev embedding]\label{lem:sobolev}
Let $\la \ge 1$. Let $2 \le p < \infty$ and $f$ be a smooth function on $\R \times \T_{\la}$. Then for $b \ge \frac12 - \frac1p$, we have
\[\norm{f}_{L_t^p(H_{\la}^s)} \lesssim \norm{f}_{X_{\la}^{s,b}}.\]
When $p = \infty$, the usual Sobolev embedding ($b > \frac12$) holds.
\end{lemma}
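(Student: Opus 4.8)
The plan is to reduce the inequality to the classical one-dimensional Sobolev embedding in the time variable, applied fiber by fiber in the spatial frequency $n$. First I would dispose of the spatial regularity by setting $g := \bra{D_x}^s f$, the Fourier multiplier acting by $\wh{g}(n) = \bra{n}^s \wh{f}(n)$. Since the weight $\bra{n}^s$ factorizes away from the modulation weight $\bra{\tau - p_{\la}(n)}^b$ in \eqref{eq:Xsb}, one has $\norm{f}_{L_t^p(H_{\la}^s)} = \norm{g}_{L_t^p L_{\la}^2}$ and $\norm{f}_{X_{\la}^{s,b}} = \norm{g}_{X_{\la}^{0,b}}$, so it suffices to treat the case $s = 0$.

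Next I would interchange the $L_t^p$ and $\ell_{\la}^2$ norms. By Plancherel in $x$ one has $\norm{g(t,\cdot)}_{L_{\la}^2} = \norm{\wh{g}(t,\cdot)}_{\ell_{\la}^2}$, and since $p \ge 2$, Minkowski's inequality (i.e. the triangle inequality in $L_t^{p/2}$ applied to $\sum_n |\wh{g}(t,n)|^2$) gives
\[
\norm{g}_{L_t^p L_{\la}^2} = \normo{\norm{\wh{g}(t,\cdot)}_{\ell_{\la}^2}}_{L_t^p} \le \left(\int_{\Z_{\la}} \norm{\wh{g}(\cdot, n)}_{L_t^p}^2 \; dn\right)^{\frac12}.
\]
Then I would treat each fiber $n$ separately. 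Writing $\phi(t) := \wh{g}(t,n)$ and factoring out the unimodular oscillation via $\psi(t) := e^{-itp_{\la}(n)}\phi(t)$, one checks $\norm{\phi}_{L_t^p} = \norm{\psi}_{L_t^p}$, while a shift of the temporal Fourier variable gives $\norm{\psi}_{H_t^b} = \norm{\bra{\tau - p_{\la}(n)}^b \wt{g}(\cdot, n)}_{L_{\tau}^2}$, which is exactly the contribution of the frequency $n$ to $\norm{g}_{X_{\la}^{0,b}}$. The one-dimensional Sobolev embedding $H^b(\R) \hookrightarrow L^p(\R)$, valid precisely for $2 \le p < \infty$ and $b \ge \frac12 - \frac1p$, then yields $\norm{\phi}_{L_t^p} \lesssim \norm{\psi}_{H_t^b}$. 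Substituting into the display and using \eqref{eq:Xsb} with $s = 0$ produces $\norm{g}_{L_t^p L_{\la}^2} \lesssim \norm{g}_{X_{\la}^{0,b}}$; undoing the reduction $g = \bra{D_x}^s f$ finishes the proof. The implicit constant is independent of $\la$, since neither Minkowski's inequality nor the time Sobolev embedding sees the spatial period.

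The one genuinely delicate point is the endpoint $b = \frac12 - \frac1p$. A crude alternative via Hausdorff--Young in $t$ followed by Hölder in the modulation variable $\tau$ produces a weight integral $\int_{\R}\bra{\tau}^{-\theta}\,d\tau$ whose exponent $\theta$ degenerates to $1$ exactly at $b = \frac12 - \frac1p$, so that route diverges logarithmically and only yields the strict inequality $b > \frac12 - \frac1p$. To reach the endpoint one must invoke the sharp form of the one-dimensional Sobolev embedding, equivalently the $L^2 \to L^p$ boundedness of the Riesz potential of order $\frac12 - \frac1p$ furnished by Hardy--Littlewood--Sobolev, which holds for all $2 \le p < \infty$ and fails only at $p = \infty$. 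The excluded case $p = \infty$ is instead covered directly by the embedding $X_{\la}^{s,b} \hookrightarrow C_t^0 H_{\la}^s$ of Lemma \ref{lem:Xsb}(1) for $b > \frac12$, as the statement records.
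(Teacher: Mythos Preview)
Your proof is correct and follows essentially the same route as the paper: both reduce the estimate to the one-dimensional Sobolev embedding $H_t^b(\R)\hookrightarrow L_t^p(\R)$ after pulling back by the linear propagator. The paper compresses your fiber-by-fiber argument into a single line, writing $\norm{f}_{L_t^p H_{\la}^s}=\norm{S_{\la}(-t)f}_{L_t^p H_{\la}^s}\lesssim\norm{S_{\la}(-t)f}_{H_t^b H_{\la}^s}=\norm{f}_{X_{\la}^{s,b}}$; your explicit Minkowski swap and per-$n$ application of Sobolev is exactly what unpacks that middle inequality. Your remark on the endpoint $b=\tfrac12-\tfrac1p$ via Hardy--Littlewood--Sobolev is a useful clarification the paper omits.
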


\begin{proof}
The proof directly follows from the Sobolev embedding with respect to the temporal variable $t$. For $S_{\la}(t)f(t,x) = \ft_x^{-1}[e^{itp_{\la}(n)}\wh{f}(t,n)]$, we know $\norm{S_{\la}(-t)f}_{H_{\la}^s} = \norm{f}_{H_{\la}^s}$. Thus,
\[\norm{f}_{L_t^p(H_{\la}^s)} = \norm{\norm{S_{\la}(-t)f}_{H_{\la}^s}}_{L_t^p} \lesssim \norm{\norm{S_{\la}(-t)f}_{H_{\la}^s}}_{H_t^b}= \norm{f}_{X_{\la}^{s,b}},\]
which completes the proof.
\end{proof}

\begin{remark}\label{rem:sobolev Y}
Lemma \ref{lem:sobolev} is valid for $Y^{s,b}$ functions, i.e., we have
\[\norm{f}_{L_t^p(H^s)} \lesssim \norm{f}_{Y^{s,b}}.\]
\end{remark}

\section{Trilinear estimates}\label{sec:tri}
In this section, we establish the trilinear estimates, which is the main task in the Fourier restriction norm method. We split the nonlinear estimate into two: Resonance and Non-resonance estimates.
\begin{lemma}[Resonance estimate]\label{lem:resonant1}
Let $s \ge \frac12$, $\frac13 \le b < 1$ and $0 < \delta \le 1-b$. For $N_{R}$ as in \eqref{eq:resonant term}, we have
\begin{equation}\label{eq:resonant1}
\norm{\N_R(u_1,u_2,u_3)}_{Y_T^{s,b-1+\delta}} \lesssim \prod_{j=1}^{3} \norm{u_j}_{Y_T^{s,b}}.
\end{equation}
\end{lemma}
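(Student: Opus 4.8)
The plan is to estimate the resonant term
\[
\N_R(u_1,u_2,u_3) = \ft_x^{-1}\Big[-\tfrac{\mu i}{(2\pi)^2}\,n\,\wh{u}_1(n)\,\wh{u}_2(-n)\,\wh{u}_3(n)\Big]
\]
directly on the Fourier side, exploiting that the resonant interaction is \emph{diagonal} in the spatial frequency: the output mode $n$ is built only from the $n$-th (and $(-n)$-th) modes of the inputs. First I would pass to the $Y^{s,b-1+\delta}$ norm via its definition and use the $X^{s,b}$-type energy estimate from Lemma \ref{lem:Xsb}(2). Since $b-1+\delta \le 0$ (because $\delta \le 1-b$), it suffices to bound the $Y^{s,b-1}$-norm of the time-restricted resonant term, after inserting the cutoff $\psi_T$; the time-localization factor $T^{b'-b+1}$ only helps and can be absorbed. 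The key reduction is that because the spatial convolution has collapsed to a single frequency $n$, the $\ell^2_n$ sum can be handled by placing the weight $\bra{n}^s$ and the derivative $n$ on \emph{one} factor and measuring the other two in $\ell^\infty_n L^2_t$ or $L^\infty_{t,x}$.

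The core of the argument is a duality/Cauchy--Schwarz computation. I would write the $Y^{s,b-1+\delta}$ norm as a pairing against a test function $w$ with $\norm{w}_{Y^{-s,1-b-\delta}} \le 1$, giving a space-time integral of the form
\[
\int_{\R}\sum_{n}\bra{n}^{s}\,n\,\wh{u}_1(n)\,\wh{u}_2(-n)\,\wh{u}_3(n)\,\ol{\wh{w}(n)}\;dt.
\]
The single derivative $n$ is the only loss, and it is exactly compensated by the regularity $s \ge \tfrac12$: crudely, $\bra{n}^s n \lesssim \bra{n}^{s}\bra{n}^{1/2}\bra{n}^{1/2}$, so I can distribute one half-derivative onto $u_2$ and one onto $u_3$ while keeping the $\bra{n}^s$ on $u_1$. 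Then I would apply Hölder in $n$ (putting $u_1,w$ in $\ell^2_n$ and the remaining two factors in $\ell^\infty_n$) followed by Hölder in $t$, and convert the $\ell^\infty_n$ control of $\bra{n}^{1/2}\wh{u}_j(\pm n)$ into $Y^{s,b}$-norms through the embedding $Y^{0,b}\hookrightarrow C_t^0 \ell^\infty_n$ (a consequence of Lemma \ref{lem:Xsb}(1) together with the trivial $\ell^2_n \hookrightarrow \ell^\infty_n$, or directly via Lemma \ref{lem:sobolev}/Remark \ref{rem:sobolev Y}). The choice $b \ge \tfrac13$ enters when distributing the temporal integrability among the four factors via the Sobolev-in-time embedding $Y^{0,b}\hookrightarrow L_t^{p}$ of Remark \ref{rem:sobolev Y}, ensuring the product of $L_t^{p_j}$ norms closes with $\sum 1/p_j \le 1$.

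The step I expect to be the main obstacle is the precise bookkeeping of derivatives and temporal integrability that forces the threshold $s \ge \tfrac12$ and the constraint $b \ge \tfrac13$ to be exactly sharp, rather than merely sufficient. Concretely, the half-derivative gain needed on $u_2$ and $u_3$ cannot be extracted from the dispersive resonance function, since on the resonant set $H_* = 0$ and there is \emph{no} smoothing from modulation; thus the full weight $n$ must be absorbed purely by the Sobolev regularity of the inputs, which is possible only down to $s = \tfrac12$ (where $\bra{n}^{1/2}\wh{u}_j$ is controlled in $\ell^2_n$, hence in $\ell^\infty_n$). I would therefore take care that no factor is over-counted in regularity and that the endpoint $s = \tfrac12$ survives the $\ell^\infty_n$ estimate; this is precisely the mechanism flagged in the introduction as the reason the resonant term obstructs well-posedness below $H^{1/2}$. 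The remaining verifications — interpolating the temporal weights, absorbing the $T$-powers, and checking that all constants are independent of $u_0$ — are routine once the frequency distribution above is fixed.
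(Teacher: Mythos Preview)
Your proposal is correct and follows the same strategy as the paper, though the paper's argument is more direct: instead of dualizing against a test function, it simply uses $b-1+\delta\le 0$ to write $\norm{\N_R}_{Y_T^{s,b-1+\delta}}\lesssim\norm{\N_R}_{Y_T^{s,0}}=\norm{\N_R}_{L_t^2H^s}$, proves the fixed-time bound $\norm{\bra{n}^s n\,\wh{u}_1(n)\wh{u}_2(-n)\wh{u}_3(n)}_{\ell^2_n}\lesssim\prod_j\norm{u_j(t)}_{H^s}$ for $s\ge\tfrac12$ (via $|n|\bra{n}^s\lesssim\bra{n}^{3s}$ and $\ell^2_n\hookrightarrow\ell^\infty_n$), and then concludes by H\"older in $t$ with three $L_t^6$ factors together with $\norm{u_j}_{L_t^6H^s}\lesssim\norm{u_j}_{Y_T^{s,1/3}}$ from Remark~\ref{rem:sobolev Y}; this is precisely where $b\ge\tfrac13$ enters. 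Your invocation of the energy estimate (Lemma~\ref{lem:Xsb}(2)) and the duality pairing are superfluous here, and note that your remark ``it suffices to bound the $Y^{s,b-1}$-norm'' points in the wrong direction (since $b-1<b-1+\delta$, that is a \emph{smaller} norm than $Y^{s,b-1+\delta}$); nonetheless, once you work directly in $Y_T^{s,0}$ the substance of your argument is sound and coincides with the paper's.
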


\begin{proof}
For the term $n\wh{u_1}(n)\wh{u_2}(-n)\wh{u_3}(n)$, since 
\[\norm{\bra{n}^{s}n\wh{u_1}(n)\wh{u_2}(-n)\wh{u_3}(n)}_{\ell_n^2} \lesssim \prod_{j=1}^3\norm{u_j}_{H^s},\]
for $s \ge \frac12$, we have from the Sobolev embedding (see Remark \ref{rem:sobolev Y}) that
\[\begin{aligned}
\norm{\ft_x^{-1}[in\wh{u_1}(n)\wh{u_2}(-n)\wh{u_3}(n)]}_{Y_T^{s,b-1+\delta}} &\lesssim \norm{\ft_x^{-1}[\bra{n}^{s}\wh{u_1}(n)\wh{u_2}(-n)\wh{u_3}(n)]}_{Y_T^{0,0}}\\
&\lesssim \prod_{j=1}^3 \norm{u_j}_{L_t^6(H^s)} \lesssim \prod_{j=1}^3\norm{u}_{Y_T^{s,\frac13}},
\end{aligned}\]
which implies \eqref{eq:resonant1}, we, thus, complete the proof.
\end{proof}

\medskip

In contrast to the resonance estimate, one can use the dispersive smoothing effect arising from the cubic non-resonant interactions. From the symmetry
\[n_1 + n_2 + n_3 = n \quad \mbox{and} \quad \tau_1 + \tau_2 + \tau_3 = \tau,\]
we know
\[(\tau_1 - p_{\la}(n_1)) + (\tau_2 - p_{\la}(n_2)) + (\tau_3 - p_{\la}(n_3)) = \tau - p_{\la}(n) + H_{\la}.\]
Thus, we alway assume that
\[\max(|\tau_j - p(n_j)|, |\tau - p(n)| ; j= 1,2,3) \gtrsim |H_{\la}| \sim |H_{\la, 0}|,\]
whenever $s > -1$.

\begin{lemma}[Non-resonance estimate]\label{lem:nonres-trilinear}
Let $\lambda \ge 1$. Assume that $\norm{v_{\lambda,0}}_{L^2} \le \rho$, for a sufficiently small $0 < \rho \ll 1$. Then, for $s > -\frac14$ there exist $0 < \delta = \delta(s) \ll 1$ and $\frac12 \le b = b(s)$ such that the following estimate holds:
\begin{equation}\label{eq:NRE}
\norm{\N_{NR}^{\la}(u_1,u_2,u_3)}_{X_{\la}^{s,b-1+\delta}} \lesssim \la^{\varsigma}\prod_{j=1}^3 \norm{u_j}_{X_{\la}^{s,b}},
\end{equation}
for $\varsigma = \varsigma(s,b,\delta) = 3-5b-10\delta +2s > 0$.
\end{lemma}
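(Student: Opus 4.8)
The plan is to run the standard Fourier restriction norm argument: pass to the dual formulation, use the resonance identity to relocate the derivative loss onto a high-modulation factor, and close with the $L^4$-Strichartz estimate (Lemma~\ref{lem:L4}) together with Sobolev embedding (Lemma~\ref{lem:sobolev}). By duality the bound \eqref{eq:NRE} is equivalent to controlling, for every $g$ with $\norm{g}_{L^2(\R\times\Z_\la)}\le 1$, the multilinear form
\[
\la^{-2}\int_* \frac{|n|\,\bra{n}^s}{\prod_{j=1}^3\bra{n_j}^s}\,
\frac{\prod_{j=1}^3 f_j(\tau_j,n_j)\, g(\tau,n)}{\bra{\tau-p_\la(n)}^{1-b-\delta}\prod_{j=1}^3\bra{\tau_j-p_\la(n_j)}^{b}},
\]
where $*$ denotes the constraints $(n_1,n_2,n_3)\in\N_n^\la$, $\sum n_j=n$, $\sum\tau_j=\tau$, and $f_j=\bra{n_j}^s\bra{\tau_j-p_\la(n_j)}^b|\wt{u_j}|$ so that $\norm{f_j}_{L^2}=\norm{u_j}_{X_\la^{s,b}}$. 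The goal is to bound this by $\la^{\varsigma}\prod_j\norm{f_j}_{L^2}$ uniformly in $\la$.

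The arithmetic heart is the resonance function. As recorded just before the statement, on $\N_n^\la$ one has that the largest of the four modulations $\bra{\tau-p_\la(n)}$, $\bra{\tau_j-p_\la(n_j)}$ is $\gtrsim |H_\la|\sim|H_{\la,0}|$ for $s>-1$, where (using $n-n_j=\sum_{k\ne j}n_k$)
\[
|H_{\la,0}|=\tfrac52\,|(n-n_1)(n-n_2)(n-n_3)|\,\Big(n_1^2+n_2^2+n_3^2+n^2+\tfrac65\beta\la^{-2}\Big).
\]
I would split the region according to which modulation is largest and according to the dyadic sizes $N_j\sim\bra{n_j}$, $N\sim\bra{n}$; by the symmetry of the $(n_1,n_2,n_3)$ slot it suffices to treat the cases in which the output modulation is largest and in which one input modulation is largest. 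In each case I extract a fractional power $|H_{\la,0}|^{\theta}$ from the largest of the four modulation weights, taking $\theta$ as large as the budget allows while retaining strictly more than $\tfrac3{10}$ of modulation on each of the four profiles. After extraction the reduced multiplier $|n|\,\bra{n}^s\,|H_{\la,0}|^{-\theta}\prod_j\bra{n_j}^{-s}$ is estimated pointwise, and the four surviving profiles are paired off by H\"older's inequality $L^4\times L^4\times L^4\times L^4\hookrightarrow L^1$, each factor handled by Lemma~\ref{lem:L4}. This is exactly where the smallness hypothesis $\norm{v_{\la,0}}_{L^2}\le\rho$ and the condition $b>\tfrac3{10}$ enter, and where the implicit constant stays uniform in $\la$.

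The main obstacle is the genuinely \emph{degenerate} near-resonant interactions, namely the high-high-high to high configurations with a double near-cancellation (two of the factors $|n-n_j|$ of minimal size $\sim\la^{-1}$), for which $|H_{\la,0}|$ is weakest, $\sim\la^{-2}N_{\max}^3$, while the regularity weight $\bra{n}^s\prod_j\bra{n_j}^{-s}\sim N_{\max}^{-2s}$ is adverse for $s<0$. There the four--$L^4$ splitting is too lossy; instead I would retain more modulation (through a bilinear refinement, or an $L^2\times L^\infty$ pairing via Lemma~\ref{lem:sobolev} on the pinned pair of frequencies while keeping the two non-pinned profiles in $L^4$) so as to absorb the full derivative loss $|n|\lesssim N_{\max}$ by $|H_{\la,0}|^{\theta}$ with $\theta$ up to $\sim\tfrac{1-2s}{3}$. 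Carrying the $\la$-weights through the prefactor $\la^{-2}$, through the $\la^{-2}$ sitting inside $|H_{\la,0}|$, and through the uniform-in-$\la$ $L^4$ constants produces the net loss $\la^{\varsigma}$; requiring this worst-case exponent to remain controllable is precisely what forces $\varsigma=3-5b-10\delta+2s>0$, which is attainable for $s>-\tfrac14$ by letting $b\downarrow\tfrac12$ and $\delta\downarrow0$. The remaining non-degenerate and low-frequency cases carry a far larger resonance gain and close by the generic four--$L^4$ scheme with room to spare.
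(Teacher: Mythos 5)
Your skeleton---duality, the observation that the largest of the four modulations dominates $|H_{\la}|\sim|H_{\la,0}|$, extraction of a power of the resonance function, and closing with Lemma \ref{lem:L4} and Lemma \ref{lem:sobolev}---is the same as the paper's, and your arithmetic in the pinned case (needed extraction $\theta=(1-2s)/3$, available $\approx\tfrac12$, hence $s>-\tfrac14$) does locate the correct threshold. The genuine gap is your case division: the ``generic four-$L^4$'' scheme cannot cover what you claim it covers. Placing all four profiles in $L^4$ forces each to retain modulation exponent strictly above $\tfrac3{10}$ for Lemma \ref{lem:L4}, so you may extract at most $\theta\approx b-\tfrac3{10}\approx\tfrac15$ from the single largest modulation; a pointwise bound of the multiplier by $|H_{\la,0}|^{1/5}$ then fails in \emph{any} configuration containing one small pairwise sum, not only in the doubly degenerate one. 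The cleanest counterexample is the high$\times$low$\times$low$\Rightarrow$high interaction (the paper's Case IV): take $n_1,n_2=O(1)$ with $n_1+n_2=O(1)\neq 0$ and $|n_3|\sim|n|\sim n_{\ast}$; the multiplier carries a full derivative $\sim n_{\ast}$, while $|H_{\la,0}|\sim|n_1+n_2|\,n_{\ast}^4$, so $|H_{\la,0}|^{1/5}\sim n_{\ast}^{4/5}$ and you are short by $n_{\ast}^{1/5}$, for \emph{every} $s\ge 0$ (no $\la$-loss can repair this, since no $\la^{-1}$ scale is involved). Similarly, the high$^3\Rightarrow$high case with a single factor $|n_1+n_2|\sim 1$ requires $\theta\ge\tfrac{1-2s}{4}>\tfrac15$ already at $s=0$, which is exactly the regularity the paper needs. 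So the claim that ``the remaining non-degenerate and low-frequency cases close by the generic four-$L^4$ scheme with room to spare'' is false.

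The repair is to make your fallback the default in all cases, which is what the paper does: keep the convolution structure rather than bounding the multiplier pointwise and applying H\"older. Concretely, after extracting $|H_{\la,0}|^{-(1-b-2\delta)}$ (extraction $\approx\tfrac12$ is admissible because the maximal-modulation factor is destined for $L_{t,\la}^2$ and needs no Strichartz exponent at all), change variables to $n'=n_1+n_2$ and estimate the $n'$-sum by Cauchy--Schwarz against the surviving weight $|n'|^{-(3-4b-8\delta+2s)}$, pairing $\wh{f_1f_2}$ in $\ell^2_{n'}$ (i.e.\ $L^4\times L^4$ via Plancherel and Lemma \ref{lem:L4}) and $\wh{f_3f_4}$ in $\ell^{\infty}_{n'}$, bounded by $\norm{f_3f_4}_{L_{\la}^1}\le\norm{f_3}_{L_t^{\infty}L_{\la}^2}\norm{f_4}_{L_{t,\la}^2}$ with Lemma \ref{lem:sobolev}. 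The summability requirement $3-4b-8\delta+2s>\tfrac12$ is where $s>-\tfrac14$ actually enters, and tracking the $\la$-powers through this scheme across the five frequency cases is what yields \eqref{eq:NRE} with $\varsigma=3-5b-10\delta+2s$. Your proposal contains all the right tools, but as written the step carrying most of the proof would fail.
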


\begin{remark}\label{rem:non-resonance estimate for s>1/2}
Lemma \ref{lem:nonres-trilinear} is valid in $Y_T^{s,b}$ without the constraint on the size of $L^2$ norm of initial data, whenever $s >-\frac14$. Precisely, for $-\frac14 \le s $ and $0 \le T \le 1 $, there exist $0 < \delta = \delta(s) \ll 1$ and $\frac12 \le b = b(s)$ such that the following estimate holds:
\begin{equation}\label{eq:NRE 12}
\norm{\N_{NR}(u_1,u_2,u_3)}_{Y_T^{s,b-1+\delta}} \lesssim \prod_{j=1}^3 \norm{u_j}_{Y_T^{s,b}}.
\end{equation}
Together with \eqref{eq:resonant1} and \eqref{eq:NRE 12}, we prove the local well-posedness in $H^s$, $s \ge \frac12$. See Section \ref{sec:WP 12} for more details.
\end{remark}

\begin{proof}[Proof of Lemma \ref{lem:nonres-trilinear}]
From the duality argument in addition to the Plancherel theorem, the left-hand side of \eqref{eq:NRE} is equivalent to
\begin{equation}\label{eq:Planchere}
C\int_{\Z_{\la}}\int_{\R} \frac{1}{\la^2} \sum_{\N_n^{\la}} \iint_{\R^2}\wt{u}_1(\tau_1,n_1)\wt{u}_2(\tau_2,n_2)\wt{u}_3(\tau_3,n_3)\wt{g}(-\tau, -n)\; d\Gamma_{\tau} \; d\tau \; dn,
\end{equation}
for some constant $C$, where $\int_{\R^2} \cdot \; d\Gamma_{\tau}$ means two dimensional integral over the hyper-surface $\{(\tau_1,\tau_2,\tau_3) \in \R^3 : \tau_1 + \tau_2 +\tau_3 =\tau\}$ and $g \in X_{\la}^{-s,1-b-\delta}$ with $\norm{g}_{X_{\la}^{-s,1-b-\delta}} \le 1$. Then, it suffices to show
\begin{equation}\label{eq:MLE0}
\int_\R \frac{1}{\la^3}\sum_{n \in \Z_{\la}, \N_n^{\la}} \M_0 \wh{f}_1(n_1)\wh{f}_2(n_2)\wh{f}_3(n_3)\wh{h}(-n) \; dt  \lesssim \la^{0+}\prod_{j=1}^3\norm{u_j}_{X_{\la}^{s,b}}\norm{h}_{X_{\la}^{0,1-b-\delta}},
\end{equation}
where $\M_0 = \M_0(n_1,n_2,n_3,n)$ is a Fourier multiplier defined as
\[\M_0(n_1,n_2,n_3,n) = |n|\bra{n}^s\bra{n_1}^{-s}\bra{n_2}^{-s}\bra{n_3}^{-s},\]
$n_{\ast} = \max(|n_1|, |n_2|, |n_3|, |n|)$, $f_j = \mathcal F^{-1}(\bra{n}^s|\wt{u}_j(\tau, n)|)$ and $h = \mathcal F^{-1}(\bra{n}^{-s}|\wt{g}(\tau, n)|)$.

\medskip

\begin{remark}\label{rem:generality}
Let $L_j = \bra{\tau_j - p_{\la}(n_j)}$ and $L = \bra{\tau - p_{\la}(n)}$.  When $L \ge \max(L_1,L_2,L_3)$, thanks to the Sobolev embedding (Lemma \ref{lem:sobolev}) and $L^4$ Strichartz estimate (Lemma \ref{lem:L4}), it suffices for \eqref{eq:MLE0} to show 
\begin{equation}\label{eq:MLE}
\int \frac{1}{\la^3}\sum_{n \in \Z_{\la}, \N_n^{\la}} \M \wh{f}_1(n_1)\wh{f}_2(n_2)\wh{f}_3(n_3)\wh{f}_4(-n) \; dt  \lesssim \la^{0+}\prod_{j=1}^2\norm{f_j}_{L_{t,\la}^4}\norm{f_3}_{L_t^{\infty}L_{\la}^2}\norm{f_4}_{L_{t,{\la}}^2},
\end{equation}
where the multiplier $\M$ is given by
\begin{equation}\label{eq:FM}
\M(n_1,n_2,n_3,n) = \frac{|n|\bra{n}^s\bra{n_1}^{-s}\bra{n_2}^{-s}\bra{n_3}^{-s}}{(|n_1+n_2||n_2+n_3||n_3+n_1| (n_{\ast})^2)^{1-b-2\delta}}
\end{equation}
and $f_4 = \mathcal F^{-1}(\bra{n}^{-s}\bra{\tau - p_{\la}(n)}^{1-b-\delta}|\wt{g}(\tau, n)|)$. Here, $f_3$ instead denotes $f_3= \mathcal F^{-1}(\bra{n}^s\bra{\tau - p_{\la}(n)}^{-\delta}|\wt{u}_3(\tau, n)|)$ to make sure that the Sobolev embedding is available for $u_3$ in $X_{\la}^{s,\frac12}$ (i.e., $\norm{f_3}_{L_t^{\infty}L_{\la}^2} \lesssim \norm{u}_{X_{\la}^{s,\frac12}}$). One can change the role of $f_3$ into either $f_1$ or $f_2$ without loss of the \emph{dispersive smoothing effect} ($\bra{\tau_j - p_{\la}(n_j)}$). On the other hand, when $\max(L_1, L_2, L_3) \gg L$,  \eqref{eq:MLE0} follows from
\begin{equation}\label{eq:MLE1}
\int \frac{1}{\la^3}\sum_{n \in \Z_{\la}, \N_n^{\la}} \M \wh{f}_1(n_1)\wh{f}_2(n_2)\wh{f}_3(n_3)\wh{f}_4(-n) \; dt \lesssim \la^{0+}\prod_{j=1}^2\norm{f_j}_{L_{t,\la}^4}\norm{f_3}_{L_{t,\la}^2}\norm{f_4}_{L_t^{\infty}L_{\la}^2},
\end{equation}
if $L_3 = \max(L_1,L_2,L_3)$, where, in this case, $f_3 = \mathcal F^{-1}(\bra{n}^s\bra{\tau - p_{\la}(n)}^{b}|\wt{u}_3|$, $f_4 = \mathcal F^{-1}(\bra{\tau - p_{\la}(n)}^{1-2b-\delta}|\wt{g}(\tau,n)|)$ and $\M$ is defined as in \eqref{eq:FM}. The multiplier $\M$ is still valid due to
\[L_3^{-b}L^{-1+2b+\delta} \ll L_3^{-1+b+\delta} \lesssim |H_{\la,0}|^{-1+b+\delta}.\]
Otherwise, the exact same computation gives
\begin{equation}\label{eq:MLE2}
 \int  \frac{1}{\la^3}\sum_{n \in \Z_{\la}, \N_n^{\la}} \M \wh{f}_1(n_1)\wh{f}_2(n_2)\wh{f}_3(n_3)\wh{f}_4(-n) \; dt \lesssim \la^{0+} \norm{f_1}_{L_t^{\infty}L_{\la}^2}\norm{f_2}_{L_{t,\la}^2}\prod_{j=3}^4\norm{f_j}_{L_{t,\la}^4},
\end{equation}
where, in this case ($L_2 = \max(L_1,L_2,L_3)$), $f_1 = \mathcal F^{-1}(\bra{n}^s\bra{\tau - p_{\la}(n)}^{-\delta}|\wt{u}_1|$ and $f_2 = \mathcal F^{-1}(\bra{n}^s\bra{\tau - p_{\la}(n)}^{b-\delta}|\wt{u}_2|)$, since
\[L_2^{-b} \lesssim  L_1^{-\delta}|H_{\la,0}|^{-b+\delta} \lesssim  L_1^{-\delta}|H_{\la,0}|^{-1+b+2\delta}.\] 
One can switch the roles between $f_1$ and $f_2$, if $L_1 = \max(L_1,L_2,L_3)$. In view of the proof below, no more assumption is needed for \eqref{eq:MLE}, \eqref{eq:MLE1} and \eqref{eq:MLE2}, and thus it suffices to show \eqref{eq:MLE} with $\M$ as in \eqref{eq:FM}.

\medskip

Finally we notify that the multiplier $\M$ as in \eqref{eq:FM} will be replaced by appropriate bound for each case in the proof below. It could be achieved when reducing \eqref{eq:MLE} from \eqref{eq:Planchere}.
\end{remark}

\medskip

\begin{remark}
When $n_* \le 1$, we know $\M_0 \le 1$ in the left-hand side of \eqref{eq:MLE0}. Thus, the change of variable ($n' = n_1 + n_2$) yields
\[\begin{aligned}
\frac{1}{\la^3}\sum_{n \in \Z_{\la}, \N_n^{\la}} =&~{} \frac{1}{\la}\sum_{|n'| \ge \la^{-1}} \wh{f_1f_2}(n')\wh{f_3f_4}(-n')\\
\lesssim&~{}\norm{f_1f_2}_{L_{\la}^2}\norm{f_3f_4}_{L_{\la}^2}\\
\lesssim&~{}\prod_{j=1}^4\norm{f_j}_{L_{\la}^4}, 
\end{aligned}\]
which implies 
\[\mbox{LHS of } \eqref{eq:MLE0} \lesssim \prod_{j=1}^3\norm{u_j}_{X_{\la}^{s,b}}\norm{h}_{X_{\la}^{0,1-b-\delta}},\]
whenever $\frac{3}{10} < 1-b-\delta$. Thus, we may assume that $n_* > 1$ without loss of generality.
\end{remark}

\medskip

\textbf{Case I} (high $\times$ high $\times$ high $\Rightarrow$ high, $|n_1|\sim|n_2|\sim|n_3|\sim|n|\sim n_{\ast}$). We may assume that $\la^{-1} < |n_1+n_2| \ll n_{\ast}$ without loss of generality, since otherwise, \eqref{eq:MLE}, \eqref{eq:MLE1} and \eqref{eq:MLE2} can be obtained with different fairs of functions in $L_{t,\la}^4$ and $L_t^{\infty}L_{\la}^2$-$L_{t,\la}^2$ norms, i.e. $(f_2,f_3)$ in $L_{t,\la}^4$ and $(f_1,f_4)$ in $L_t^{\infty}L_{\la}^2$-$L_{t,\la}^2$ or vice versa. In this case, the multiplier $\M$ is bounded by
\[\frac{\la^{1-b-2\delta}}{|n_1+n_2|^{1-b-2\delta}n_{\ast}^{2-3b-6\delta + 2s}}.\] 
For $s > -\frac14$, we choose $\delta = \frac{4s+1}{24}$. Then, for all $\frac12 < b < \frac{2+2s-6\delta}{3}$, we have
\begin{equation}\label{eq:multiplier1}
\M \le \frac{\la^{1-b-2\delta}}{|n_1+n_2|^{3-4b-8\delta + 2s}}.
\end{equation}
We replace the original $\M$ by the right-hand side of \eqref{eq:multiplier1}. The change of variable ($n' = n_1 + n_2$) and the summation over $n_2, n_3$ yield
\[\begin{aligned}
\mbox{LHS of } \eqref{eq:MLE} &= \int \frac{1}{\la^3}\sum_{\substack{n',n_2,n_3 \in \Z_{\la} \\ |n'| \ge \la^{-1}}} \frac{\la^{1-b-2\delta}}{|n'|^{3-4b-8\delta + 2s}} \wh{f}_1(n'-n_2)\wh{f}_2(n_2)\wh{f}_3(n_3)\wh{f}_4(-n'-n_3)\\
&= \int \frac{1}{\la}\sum_{|n'| \ge \la^{-1}} \frac{\la^{1-b-2\delta}}{|n'|^{3-4b-8\delta + 2s}}\wh{f_1f_2}(n')\wh{f_3f_4}(-n').
\end{aligned}\]
For $s > -\frac14$, choose $\delta = \min(\frac{4s+1}{20}, \frac{1}{20})$. Then, taking the $\ell^{\infty}$-norm at $\wh{f_3f_4}(-n')$ and the Cauchy-Schwarz inequality for the rest, one has
\[\begin{aligned}
\mbox{LHS of } \eqref{eq:MLE} &\lesssim \la^{3-5b-10\delta+2s} \int \norm{f_1f_2}_{L_{\la}^2}\norm{f_3f_4}_{L_{\la}^1}\\
&\lesssim \la^{3-5b-10\delta+2s}\prod_{j=1}^2\norm{f_j}_{L_{t,\la}^4}\norm{f_3}_{L_t^{\infty}L_{\la}^2}\norm{f_4}_{L_{t,\la}^2},
\end{aligned}\]
due to 
\[\frac{1}{\la}\sum_{|n'| \ge \la^{-1}} \frac{1}{|n'|^{2(3-4b-8\delta + 2s)}} < \la^{-2 + 2(3-4b-8\delta + 2s)}\]
for all $\frac12 \le b  < \frac58 + - 2\delta + \min(\frac{s}{2},0)$.

\medskip

The argument used in \textbf{Case I} could be applicable to the other cases. In what follows, we only point the bound of the multiplier $\M$ out, but omit the computation.

\medskip

\textbf{Case II} (high $\times$ high $\times$ low $\Rightarrow$ high, $|n_1| \ll |n_2| \sim |n_3| \sim |n| \sim n_{\ast}$). The choice of the minimum frequency $n_1$ does not lose of the generality in the proof below. In this case, $\M$ is bounded by
\[\frac{1}{|n_1+n_2|^{5(1-b-2\delta)-1+s}},\]
for $s \ge 0$, and 
\[\frac{1}{|n_1+n_2|^{5(1-b-2\delta)-1+2s}},\]
for $s < 0$, due to $|n_1+n_2| \sim n_{\ast}$. 

\medskip

The same computation used in \textbf{Case I} is available, when $5(1-b-2\delta)-1+s$ for $ s \ge 0$ and $5(1-b-2\delta)-1+2s > \frac12$ for $s < 0$. Thus, for $s > -\frac12$, we choose $\delta = \min(\frac{2s+1}{20}, \frac1{20})$ such that \eqref{eq:MLE} holds true for $\frac12 \le b < \frac{7}{10} -2\delta + \min(\frac{2s}{5}, 0)$.

\medskip

\textbf{Case III} (high $\times$ high $\times$high $\Rightarrow$ low, $|n| \ll |n_1|\sim |n_2| \sim |n_3| \sim n_{\ast}$). In this case, $\M$ is bounded by
\[\frac{1}{|n_1+n_2|^{5(1-b-2\delta)-1+2s}},\]
for $s \ge 0$, and 
\[\frac{1}{|n_1+n_2|^{5(1-b-2\delta)-1+3s}},\]
for $s < 0$, due to $|n_1+n_2| \sim n_{\ast}$. For $s > -\frac13$, choosing $\delta = \min(\frac{3s+1}{20}, \frac1{20})$, one shows \eqref{eq:MLE} for $\frac12 \le b < \frac{7}{10} -2\delta + \min(\frac{3s}{5}, 0)$ via the same computation used in \textbf{Case I}.

\medskip

\textbf{Case IV} (high $\times$ low $\times$ low $\Rightarrow$ high, $|n_1|, |n_2| \ll |n_3|\sim |n| \sim n_{\ast}$). The choice of the maximum frequency $|n_3|$ is to ensure $0 < |n_1+n_2| \ll n_{\ast}$, and it does not lose the generality, thanks to the same reason in \textbf{Case I}, where $0 < |n_1+n_2| \ll n_{\ast}$ is to be supposed. In this case, $\M$ is bounded by
\[\frac{1}{|n_1+n_2|^{1-b-2\delta}n_{\ast}^{4(1-b-2\delta)-1}},\]
for $s \ge 0$, and 
\[
\frac{1}{|n_1+n_2|^{1-b-2\delta}n_{\ast}^{4(1-b-2\delta)-1+2s}},
\]
for $s < 0$. For $s > -\frac12$, by taking $\delta = \min(\frac{1+2s}{20}, \frac{1}{20})$, we have \eqref{eq:MLE} for all $\frac12 \le b < \frac{7}{10} - 2\delta + \min(\frac{2s}{5},0)$.

\medskip

\textbf{Case V} (high $\times$ high $\times$ low $\Rightarrow$ low, $|n|, |n_3| \ll |n_1|\sim |n_2| \sim n_{\ast}$). Similarly as in \textbf{Case IV}, the choice of the minimum frequency $|n_3|$ is to ensure $0 < |n_1+n_2| \ll n_{\ast}$, and hence it does not lose the generality. In this case, $\M$ is bounded by
\[\frac{1}{|n_1+n_2|^{1-b-2\delta}n_{\ast}^{4(1-b-2\delta)-1+s}},\]
for $s \ge 0$, and 
\[\frac{1}{|n_1+n_2|^{1-b-2\delta}n_{\ast}^{4(1-b-2\delta)-1+3s}},\]
for $s < 0$. For $s > -\frac13$, by taking $\delta = \min(\frac{3s+1}{20}, \frac{1}{20})$, we have \eqref{eq:MLE} for all $\frac12 \le b < \frac{7}{10} - 2\delta + \min(\frac{3s}{5},0)$.

\medskip

Gathering all, for $s > -\frac14$, we can choose $0 < \delta = \min(\frac{1+4s}{20},\frac{1}{20})$ such that \eqref{eq:MLE0} holds true for all $\frac12 \le b  < \frac58 + - 2\delta + \min(\frac{s}{2},0)$.

\medskip

Note that the worst $\la$-bound in all cases is $\la^{3-5b-10\delta +2s}$.
\end{proof}

Using \eqref{eq:lambda-kawahara_u}, we have
\begin{equation}\label{eq:energy}
\pt |\wh{u}(t,n)|^2 = \frac{2\mu }{3(2\pi \lambda)^2} n\mbox{Im}\left[ \sum_{\N_n^{\lambda}} \wh{u}(n_1)\wh{u}(n_2)\wh{u}(n_3)\wh{u}(-n)\right].
\end{equation}
An immediate corollary of Lemma \ref{lem:nonres-trilinear}, in addition to \eqref{eq:energy}, is the following:

\begin{corollary}\label{cor:trilinear}
Let $\lambda \ge 1$ and $-1/4 <s$. Assume that $\norm{v_{\lambda,0}}_{L^2} \le \rho$, for a sufficiently small $0 < \rho \ll 1$. Suppose that $u$ is a real-valued smooth solution to \eqref{eq:lambda-kawahara_u} and $u \in X_{\la}^{s,b}$. Then, there exists $\frac12 \le b=b(s)$ such that the following is holds true.
\begin{equation}\label{eq:trilinear1}
\norm{u}_{H_{\la}^s}^2 \sim \frac{1}{\la}\sum_{n \in\Z_{\la}} \bra{n}^{2s}|\wh{u}(t,n)|^2\lesssim \norm{u_0}_{H_{\la}^{s}}^2 + \la^{\varsigma}\norm{u}_{X_{\la}^{s,b}}^4,
\end{equation}
for $\varsigma = \varsigma(s,b) = 3-5b+2s > 0$.
\end{corollary}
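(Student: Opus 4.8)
The plan is to read off \eqref{eq:trilinear1} directly from the energy identity \eqref{eq:energy} together with the multilinear estimate \eqref{eq:MLE0} already established inside the proof of Lemma \ref{lem:nonres-trilinear}; essentially no new analytic input is required. First I would multiply \eqref{eq:energy} by $\bra{n}^{2s}$, sum over $n \in \Z_\la$, divide by $\la$, and integrate in time over $[0,t]$. Since $\frac{1}{\la}\sum_{n}\bra{n}^{2s}|\wh u(t,n)|^2 \sim \norm{u(t)}_{H_\la^s}^2$, this produces the exact identity
\[
\norm{u(t)}_{H_\la^s}^2 - \norm{u_0}_{H_\la^s}^2 = \frac{C}{\la^3} \int_0^t \sum_{n \in \Z_\la,\, \N_n^\la} \bra{n}^{2s}\, n\, \mbox{Im}\Big[\wh u(n_1)\wh u(n_2)\wh u(n_3)\wh u(-n)\Big]\; ds,
\]
so that the whole claim reduces to bounding the space--time quadrilinear form on the right by $\la^{\varsigma}\norm{u}_{X_\la^{s,b}}^4$.

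Next I would dispose of the sharp time cutoff. Bounding the imaginary part by the modulus and moving absolute values inside the frequency sum gives
\[
\Big| \frac{1}{\la^3}\int_0^t \sum_{n \in \Z_\la,\, \N_n^\la} \bra{n}^{2s}\, n\, \mbox{Im}[\cdots]\; ds \Big| \le \frac{1}{\la^3}\int_\R \sum_{n \in \Z_\la,\, \N_n^\la} \bra{n}^{2s}|n|\, |\wh u(n_1)||\wh u(n_2)||\wh u(n_3)||\wh u(-n)|\; ds,
\]
where the restriction $\mathbbm{1}_{[0,t]}(s)$ is simply discarded because the integrand is now nonnegative. This is exactly the type of expression controlled in \eqref{eq:MLE0}, whose proof (through \eqref{eq:MLE}, \eqref{eq:MLE1}, \eqref{eq:MLE2}) proceeds entirely by H\"older's inequality, the $L^4$-Strichartz estimate (Lemma \ref{lem:L4}) and Sobolev embedding (Lemma \ref{lem:sobolev}), and therefore bounds the absolute-valued integrand equally well.

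The only bookkeeping point is the distribution of the weight $\bra{n}^{2s}|n|$. I would factor it as $\M_0 \cdot \bra{n_1}^s\bra{n_2}^s\bra{n_3}^s\bra{n}^s$, with $\M_0$ the multiplier of \eqref{eq:MLE0}, assigning $\bra{n_j}^s|\wh u(n_j)|$ to the slots $f_1,f_2,f_3$ and $\bra{n}^s|\wh u(-n)|$ to the dual slot $h$. Then $\norm{f_j}_{X_\la^{s,b}} = \norm{u}_{X_\la^{s,b}}$, while $\norm{h}_{X_\la^{0,1-b-\delta}} = \norm{u}_{X_\la^{s,1-b-\delta}}$; and since $b \ge \frac12$ forces $1-b-\delta \le b$, we have $\norm{u}_{X_\la^{s,1-b-\delta}} \lesssim \norm{u}_{X_\la^{s,b}}$. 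Applying \eqref{eq:MLE0} then bounds the form by $\la^{\varsigma}\norm{u}_{X_\la^{s,b}}^4$ with $\varsigma = 3-5b+2s$, which is the sharpest $\la$-power recorded at the end of the proof of Lemma \ref{lem:nonres-trilinear} (up to the harmless $-10\delta$); this is precisely \eqref{eq:trilinear1}.

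I expect no genuine obstacle here: the entire difficulty was already absorbed into Lemma \ref{lem:nonres-trilinear}, and the reality of $u$ -- which underlies \eqref{eq:energy} through $\wh u(-n) = \ol{\wh u(n)}$ -- is the only structural hypothesis used. The one place to be slightly careful is the dual slot, where one must check that inserting $\bra{n}^s|\wh u(-n)|$ (rather than a normalized test function) into the $X_\la^{0,1-b-\delta}$ factor is legitimate, i.e. that the temporal exponent $1-b-\delta$ appearing there does not exceed $b$, which is exactly what $b \ge \frac12$ guarantees.
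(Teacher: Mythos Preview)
Your overall strategy---integrate \eqref{eq:energy}, recognize the result as the quadrilinear form dual to $\N_{NR}^\la$, and invoke Lemma \ref{lem:nonres-trilinear}---is exactly what the paper has in mind, and your bookkeeping on the weights and on the condition $1-b-\delta \le b$ is correct. The gap is in how you dispose of the sharp cutoff.

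When you pass from $\wh u(s,n_j)$ to $|\wh u(s,n_j)|$ you destroy the temporal oscillation that the proof of \eqref{eq:MLE0} actually uses. The reduction from $\M_0$ to $\M$ in Remark \ref{rem:generality} pulls a factor $|H_{\la,0}|^{-(1-b-2\delta)}$ out of the largest modulation $\bra{\tau_j-p_\la(n_j)}$; this is a space--time Fourier statement and it is \emph{not} available for the functions $U$ with $\wh U(t,n)=|\wh u(t,n)|$, whose $X^{s,b}_\la$ norms are in no way controlled by $\norm{u}_{X^{s,b}_\la}$. Concretely, take $u$ a time-localized free solution with $\wh u_0$ supported at $|n|\sim N$ and $\norm{u_0}_{H_\la^s}\sim 1$; then your right-hand side (with $|\wh u|$'s) is of order $N^{2-2s}$, which blows up for $s<1$, while $\norm{u}_{X^{s,b}_\la}^4\sim 1$. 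So the inequality you want after taking absolute values is simply false.

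The fix is minimal and stays entirely within your framework: do \emph{not} take absolute values of the spatial coefficients. Write
\[
\int_0^t \cdots\, ds \;=\; \int_\R \frac{1}{\la^3}\sum_{n,\,\N_n^\la}\bra{n}^{2s}\, n\,\wh u(n_1)\wh u(n_2)\wh u(n_3)\,\wh{(\mathbbm 1_{[0,t]}u)}(-n)\; ds,
\]
apply Lemma \ref{lem:nonres-trilinear} by duality (this is precisely \eqref{eq:MLE0} with $u_1=u_2=u_3=u$ and $g=\bra{\px}^{2s}\mathbbm 1_{[0,t]}u$), and then use the standard fact that multiplication by $\mathbbm 1_{[0,t]}$ is bounded on $X_\la^{s,b'}$ uniformly in $t$ for $|b'|<\tfrac12$; since the dual exponent is $1-b-\delta<\tfrac12$, this gives $\norm{\mathbbm 1_{[0,t]}u}_{X_\la^{s,1-b-\delta}}\lesssim \norm{u}_{X_\la^{s,b}}$ and finishes the proof. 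This is the content of the paper's ``immediate corollary'' and is also how the cutoff is handled later in Proposition \ref{prop:main} (the extension by zero referenced there).
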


\section{Smoothing effect}\label{sec:energy}

\subsection{A priori bound}

\begin{proposition}\label{prop:main}
Let $\lambda \ge 1$. Assume that $\norm{v_{\lambda,0}}_{L_{\la}^2} \le \rho$, for a sufficient small $0 < \rho \ll 1$. Let $0 \le s < \frac12$, $t \in [0,1]$ and $u_0 \in C^{\infty}(\T_{\la})$. Suppose that $u$ is a real-valued smooth solution to \eqref{eq:lambda-kawahara_u} and $u \in X_{\la}^{s,\frac12}$. Then the following estimate holds:
\begin{equation}\label{eq:main}
\begin{aligned}
\sup_{n \in \Z_{\lambda}}& \Bigg|\frac{1}{\lambda^4}\mbox{Im}\Bigg[ \int_0^t  n^2 \sum_{\N_n^{\la}}\wh{u}(s,n_1)\wh{u}(s,n_2)\wh{u}(s,n_3)\wh{u}(s,-n) \; ds \Bigg]\Bigg|\\
&\lesssim \la^{-1}\log \la\left(\norm{u_0}_{H_{\la}^s}^4 + \left(\norm{u_0}_{H_{\la}^{s}}^2 + \la^{\frac12+2s} \norm{u}_{X_{\la}^{s,\frac12}}^4\right)^2\right) +  \la^{-\frac12}\norm{u}_{X_{\la}^{s,\frac12}}^4 + \la^{-\frac32}\norm{u}_{X_{\la}^{s,\frac12}}^6.
\end{aligned}
\end{equation}
Note that $\log \la$ is replaced by $1$ when $\la =1$.
\end{proposition}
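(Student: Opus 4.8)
The plan is to exploit the oscillation in time carried by the non-resonant phase and integrate by parts in $s$ (a first-step normal form reduction), thereby trading the derivative loss $n^2$ for the smoothing gain $|H_\la|^{-1}$. Introducing the interaction representation $c_j(s) := e^{-isp_\la(n_j)}\wh{u}(s,n_j)$ and $c_4(s) := e^{-isp_\la(-n)}\wh{u}(s,-n)$, the relation $H_\la = p_\la(n) - p_\la(n_1) - p_\la(n_2) - p_\la(n_3)$ together with the oddness of $p_\la$ shows that the integrand in \eqref{eq:main} equals, up to a constant,
\[
\frac{1}{\la^4}\,\mathrm{Im}\int_0^t n^2 \sum_{\N_n^\la} e^{-isH_\la}\, c_1(s)c_2(s)c_3(s)c_4(s)\; ds .
\]
On $\N_n^\la$ the resonance function never vanishes and, by Remark \ref{rem:negligible factor la}, $|H_\la| \sim |H_{\la,0}| \sim |n_1+n_2|\,|n_2+n_3|\,|n_3+n_1|\,n_*^2$ for $s>-1$; hence the crucial multiplier bound
\[
\frac{n^2}{|H_\la|} \lesssim \frac{1}{|n_1+n_2|\,|n_2+n_3|\,|n_3+n_1|}
\]
is what will supply the smoothing throughout.

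Integrating by parts in $s$ via $\frac{d}{ds}e^{-isH_\la} = -iH_\la e^{-isH_\la}$ produces a boundary contribution
\[
\frac{i}{H_\la}\Big(\wh{u}(t,n_1)\wh{u}(t,n_2)\wh{u}(t,n_3)\wh{u}(t,-n) - \wh{u}_0(n_1)\wh{u}_0(n_2)\wh{u}_0(n_3)\wh{u}_0(-n)\Big)
\]
together with an interior contribution $\frac{i}{H_\la}\int_0^t e^{-isH_\la}\,\partial_s(c_1c_2c_3c_4)\,ds$. By the Leibniz rule and the equation \eqref{eq:lambda-kawahara_u}, each $\partial_s c_j$ equals $e^{-isp_\la(n_j)}$ times the cubic nonlinearity at $n_j$, so substituting turns the interior term into a sum of six-linear expressions (plus mixed contributions from the reduced resonant part of the nonlinearity), all still carrying the factor $|H_\la|^{-1}$.

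For the boundary terms I would, after inserting the multiplier bound, change variables to the smallest frequency sum, say $n'=n_1+n_2$, so that the sum factorizes into $\wh{u_1u_2}(n')\,\wh{u_3u_4}(-n')$; applying the $L^4$-Strichartz estimate (Lemma \ref{lem:L4}) to each pair yields the purely quartic bound $\la^{-\frac12}\norm{u}_{X_\la^{s,\frac12}}^4$. In the genuinely near-resonant regime, where one pairing must instead be controlled pointwise in frequency at time $s=t$, I would bound that factor by $|\wh{u}(t,\cdot)|^2$ and invoke Corollary \ref{cor:trilinear} to estimate it by $\norm{u_0}_{H_\la^s}^2 + \la^{\frac12+2s}\norm{u}_{X_\la^{s,\frac12}}^4$; squaring (two such factors arise) and summing $\frac1\la\sum_{n'}|n'|^{-1}\sim \log\la$ over the admissible frequencies produces the term $\la^{-1}\log\la\big(\norm{u_0}_{H_\la^s}^4 + (\norm{u_0}_{H_\la^s}^2 + \la^{\frac12+2s}\norm{u}_{X_\la^{s,\frac12}}^4)^2\big)$, the pure $\norm{u_0}^4$ piece coming from the $s=0$ endpoint.

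For the interior six-linear term, after substituting the non-resonant cubic nonlinearity I would combine the smoothing multiplier with Cauchy--Schwarz, Lemma \ref{lem:L4} and the Sobolev embedding (Lemma \ref{lem:sobolev}), distributing modulation weights across the factors as in the case analysis of Lemma \ref{lem:nonres-trilinear}, to reach $\la^{-\frac32}\norm{u}_{X_\la^{s,\frac12}}^6$. I expect this interior term to be the main obstacle: after differentiation one frequency splits into three, so the six-linear expression carries a more intricate frequency geometry in which the new cancellations among the factors must be tracked carefully to ensure the gain $|H_\la|^{-1}$ from the original interaction is not wasted and the $\la$-powers close as stated. A secondary delicate point is the near-resonant boundary analysis, where extracting the squared endpoint structure through Corollary \ref{cor:trilinear} together with the logarithmic frequency summation, rather than accepting a naive power loss, is precisely what allows the argument to reach the full range $0 \le s < \frac12$, in particular the $L^2$ level $s=0$.
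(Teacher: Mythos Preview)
Your skeleton---integration by parts in $s$, the bound $n^2/|H_\la|\lesssim(|n_1+n_2||n_2+n_3||n_3+n_1|)^{-1}$, splitting into boundary and interior pieces, and invoking Corollary~\ref{cor:trilinear} to control $\norm{u(t)}_{H^s_\la}$---is exactly the paper's approach. Two of your mechanisms, however, need correction. For the boundary terms you cannot apply the spacetime $L^4$ estimate, since these live at a \emph{fixed} time $s=0$ or $s=t$; the paper bounds them purely in $H^s$ by estimating $\norm{\wh{g_1g_2}}_{\ell^\infty}\norm{\wh{g_3g_4}}_{\ell^\infty}\le\prod_j\norm{f_j}_{H_\la^s}$ and summing $\la^{-1}\sum_{|n'|\ge\la^{-1}}|n'|^{-1}\lesssim\log\la$, which is the origin of the $\la^{-1}\log\la$ factor. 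Only afterward is Corollary~\ref{cor:trilinear} applied to $\norm{u(t)}_{H_\la^s}^2$; no $\la^{-1/2}\norm{u}^4$ arises from the boundary.

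The more serious gap is in the interior six-linear term. Your plan to ``distribute modulation weights as in Lemma~\ref{lem:nonres-trilinear}'' fails in the worst subcase (all six frequencies $\sim N$): once $\partial_s c_1$ is replaced by the non-resonant cubic at $n_1$, the effective phase becomes $\wt{H}_\la = H_\la(n_1,n_2,n_3,n)+H_\la(n_{11},n_{12},n_{13},n_1)$, and the paper exhibits explicit frequency configurations for which $\wt{H}_\la=0$. Hence no additional smoothing $L_{\max}\gtrsim|\wt{H}_\la|$ is available and the trilinear-estimate mechanism cannot be iterated. The paper instead runs a direct counting estimate using only the original $|H_\la|^{-1}$, crucially keeping $\sup_{n\in I_N}$ rather than $\sum_n$, two changes of variable, and Cauchy--Schwarz, landing on $\norm{f}_{L^4}^2\norm{f}_{L^2}^4$; this is precisely what pins the range at $s\ge 0$. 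A further subtlety you did not flag: in the high$\times$low$\times$low$\Rightarrow$high component the direct estimate diverges logarithmically at $s=0$, and the paper performs a \emph{second} normal form reduction there to close.
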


Before proving Proposition \ref{prop:main}, we introduce the projection operator $P_N$ as follows: For $N = 2^k$, $k \in \Z_{\ge 0}$, let 
\[I_1 = [-1,1] \quad \mbox{and} \quad I_N = [-2N,-N/2] \cup [N/2, 2N], \quad N \ge 2.\]
Note that $|I_N| \sim \la N$. For a characteristic function $\chi_E$ on a set $E$, define $P_N$ by
\[\ft_x[P_Nf](n) = \chi_{I_N}(n)\wh{f}(n).\]
We use the convention
\[P_{\le N} = \sum_{M \le N} P_M, \qquad P_{>N} = \sum_{M > N} P_M.\]
Let $N \ge 1$ be given. We decompose a function $f$ into the following three pieces:
\[f = f_{low} + f_{med} + f_{high},\]
where $f_{med} =  P_{N}f$, $f_{low} = P_{\le N}f -  f_{med}$ and $f_{high} =  P_{\ge N}f - f_{med}$.

\begin{proof}[Proof of Proposition \ref{prop:main}]
The left-hand side of \eqref{eq:main} bounded by
\begin{equation}\label{eq:main1}
\sup_{N\ge1}\sup_{n \in I_N}\frac{1}{\la^4} \Bigg|\mbox{Im}\Bigg[ \int_0^t  n^2\sum_{\N_n^{\la}}\wh{u}(s,n_1)\wh{u}(s,n_2)\wh{u}(s,n_3)\wh{u}(s,-n) \; ds \Bigg]\Bigg|.
\end{equation}
We deal with \eqref{eq:main1} by dividing into several cases.

\medskip

\begin{remark}
When $N = 1$, by the Cauchy-Schwarz inequality, $L^4$  estimate and the Sobolev embedding,  we have 
\[\eqref{eq:main1} \lesssim \la^{-\frac32} \int_0^t \norm{u(s)}_{L_{\la}^2}^2\norm{u(s)}_{L_{\la}^4}^2 \; ds \lesssim \la^{-\frac32} \norm{u}_{X_{\la}^{0,b}}^4,\]
for $b > \frac{3}{10}$. Thus, we may assume that $N > 1$.
\end{remark}

\medskip

\textbf{Case I.} (\emph{high $\times$ high $\times$ high $\Rightarrow$ high}) We consider the following term:
\begin{equation}\label{eq:hhhh0}
\frac{1}{\la^4}\sup_{N>1}\sup_{n \in I_N} \Bigg|\mbox{Im}\Bigg[ \int_0^t  n^2 \sum_{\N_n^{\la}} \wh{u}_{med}(s,n_1)\wh{u}_{med}(s,n_2)\wh{u}_{med}(s,n_3)\wh{u}(s,-n) \; ds \Bigg]\Bigg|.
\end{equation}
In order to control \eqref{eq:hhhh0} in $L^2$ regularity level, it is required to use the \emph{Normal form reduction method}.

\medskip

Observe that\footnote{It is immediately known from \eqref{p(n)} that $p(n)$ is the odd function, i.e. $p(-n) = -p(n)$.}
\begin{equation}\label{eq:time derivative}
\begin{aligned}
\pt\wh{\varphi} :=&~{} \pt \left( e^{-itp_{\la}(n)}\wh{u}(n) \right) \\
=&~{} e^{-itp_{\la}(n)} \left( \pt \wh{u}(n) - ip_{\la}(n) \wh{u}(n) \right)\\
=&~{} e^{-itp_{\la}(n)} \Bigg( -\frac{\mu i n}{(2\pi \lambda)^2} (|\wh{u}(n)|^2-|\wh{u}_0(n)|^2)\wh{u}(n) +\frac{\mu i}{3(2\pi \lambda)^2}n \sum_{\N_n^{\lambda}}\wh{u}(n_1)\wh{u}(n_2)\wh{u}(n_3) \Bigg).
\end{aligned}
\end{equation}

Taking the integration by parts with respect to the time variable $s$, one has
\[\begin{aligned}
&\int_0^t  n^2\sum_{\N_n^{\la}} \wh{u}(s,n_1)\wh{u}(s,n_2)\wh{u}(s,n_3)\wh{u}(s,-n) \; ds \\
=&~{}	\int_0^t n^2 \sum_{\N_n^{\la}} e^{-is(H_{\la})}\wh{\varphi}(s,n_1)\wh{\varphi}(s,n_2)\wh{\varphi}(s,n_3)\wh{\varphi}(s,-n) \; ds\\
=&~{}-\sum_{\N_n^{\la}} \frac{n^2}{i(H_{\la})} \wh{u}(t,n_1)\wh{u}(t,n_2)\wh{u}(t,n_3)\wh{u}(t,-n)\\
&~{}+\sum_{\N_n^{\la}} \frac{n^2}{i(H_{\la})}\wh{u}_0(n_1)\wh{u}_0(n_2)\wh{u}_0(n_3)\wh{u}_0(-n) \\
&~{}+\int_0^t \sum_{\N_n^{\la}} \frac{n^2e^{-is(H_{\la})}}{i(H_{\la})} \cdot \frac{d}{ds}\left(\wh{\varphi}(s,n_1)\wh{\varphi}(s,n_2)\wh{\varphi}(s,n_3)\wh{\varphi}(s,-n) \right)\; ds,
\end{aligned}\]
where $H_{\la}$ is defined as in \eqref{eq:resonant function la}. Then, \eqref{eq:hhhh0} is reduced as follows:
\begin{equation}\label{eq:normal form}
\begin{aligned}
\eqref{eq:hhhh0} \le&~{} \frac{1}{\la^4}\sup_{N>1}\sup_{n \in I_N}  \sum_{\N_n^{\la}} \frac{n^2}{|H_{\la}|}\left| \wh{u}_{med}(t,n_1)\wh{u}_{med}(t,n_2)\wh{u}_{med}(t,n_3)\wh{u}(t,-n) \right|\\
&~{} + \frac{1}{\la^4}\sup_{N>1}\sup_{n \in I_N}  \sum_{\N_n^{\la}}\frac{n^2}{|H_{\la}|} \left|\wh{u}_{0,med}(n_1)\wh{u}_{0,med}(n_2)\wh{u}_{0,med}(n_3)\wh{u}_{0}(-n)\right|\\
&~{}+\frac{1}{\la^4}\sup_{N>1}\sup_{n \in I_N} \Bigg|\int_0^t \sum_{\N_n^{\la}} \frac{n^2e^{-is(H_{\la})}}{i(H_{\la})}\\
& \hspace{9em} \times \frac{d}{ds}\left(\wh{\varphi}_{med}(s,n_1)\wh{\varphi}_{med}(s,n_2)\wh{\varphi}_{med}(s,n_3)\wh{\varphi}(s,-n) \right) ds \Bigg|\\
=:&~{} \Sigma_1 + \Sigma_2 + \Sigma_3.
\end{aligned}
\end{equation}
From Remark \ref{rem:negligible factor la}, we know that $|H_{\la}| \sim |H_{\la,0}|$.

\medskip

For $\Sigma_1$ and $\Sigma_2$, it suffices to show
\begin{equation}\label{eq:hhhh1}
\frac{1}{\la^4}\sup_{N>1}\sup_{n \in I_N} \left| \sum_{\N_{n_4}^{\la}} \frac{n^2}{|H_{\la,0}|} \wh{f}_{1,med}(n_1)\wh{f}_{2,med}(n_2)\wh{f}_{3,med}(n_2)\wh{f}_{4,med}(-n_4) \right| \lesssim \prod_{j=1}^{4}\norm{f_j}_{H^s},
\end{equation}
for a certain regularity $s \in \R$. In this case, one can replace $\sup_{n \in I_N}$ by $\sum_{n \in I_N}$.

\medskip

Put $\wh{g}_i(n) = \bra{n}^s|\wh{f}_{i,med}(n)|$, $i =1,2,3,4$. We know from \eqref{eq:resonant function la} with Remark \ref{rem:negligible factor la} that $|H_{\la}| \gtrsim |(n_1+n_2)(n_2+n_3)(n_3+n_1)|n_4^2 $. We may assume $|n_3+n_1| \sim N$. Since $|H_{\la}|^{-1} \lesssim \la N^{-3}|n_1+n_2|^{-1}$, a straightforward calculation (after the change of variable $n'=n_1+n_2 \neq 0$) yields
\begin{equation}\label{eq:nonresonant estimate}
\begin{aligned}
\eqref{eq:hhhh1} \lesssim&~{} \frac{1}{\la^43}\sup_{N>1}N^{-(1+4s)}\sum_{\substack{n_1,n_4\\ |n'| \ge \la^{-1}}} \frac{1}{|n'|}\wh{g}_1(n_1)\wh{g}_2(n'-n_1)\wh{g}_3(n_4-n')\wh{g}_4(-n_4)\\
\lesssim&~{}\frac{1}{\la}\sup_{N>1}N^{-(1+4s)}\sum_{\la^{-1} \le |n'| \le N} \frac{1}{|n'|}\wh{g_1g_2}(n')\wh{g_3g_4}(-n')\\
\lesssim&~{} \frac{1}{\la}\sup_{N>1}N^{-(1+4s)}(\log \la + \log N)\norm{\wh{g_1g_2}}_{\ell^{\infty}}\norm{\wh{g_3g_4}}_{\ell^{\infty}}\\
\lesssim&~{} \la^{-1}\log \la \prod_{j=1}^{4}\norm{f_j}_{H_{\la}^s},
\end{aligned}
\end{equation}
whenever $1 + 4s > 0  \Rightarrow -1/4 < s < 1/2$, which, in addition to \eqref{eq:trilinear1}, implies
\begin{equation}\label{eq:boundary estimate}
\begin{aligned}
\Sigma_1 + \Sigma_2  \lesssim&~{} \la^{-1}\log \la \left(\norm{u_0}_{H_{\la}^s}^4 + \norm{u(t)}_{H_{\la}^s}^4\right)\\
\lesssim&~{} \la^{-1}\log \la\norm{u_0}_{H_{\la}^s}^4 + \la^{-1}\log \la\left(\norm{u_0}_{H_{\la}^{s}}^2 + \la^{\frac12+2s} \norm{u}_{X_{\la}^{s,\frac12}}^4\right)^2,
\end{aligned}
\end{equation}
whenever $-1/4 < s < 1/2$.

\medskip

\begin{remark}\label{rem:choice of maximum frequency}
In view of the computation \eqref{eq:nonresonant estimate}, one can know that another choice of the assumption, $|n_1+n_2| \sim N$ or $|n_2+n_3| \sim N$, does not make a difference in the result. Hence our assumption does not lose the generality.
\end{remark}

\medskip

\begin{remark}\label{rem:boundary}
An analogous argument for the estimates of the boundary terms cannot be available in the uniqueness part, since Corollary \ref{cor:trilinear} does not hold for the difference of two solutions (due to the lack of the symmetry), that is to say, the estimate
\begin{equation}\label{eq:energy_diff}
\frac{1}{\la}\sum_{n \in\Z_{\la}} \bra{n}^{2s}|\wh{w}(t,n)|^2\le C(\norm{u_1}_{X_{\la}^{s,\frac12}}, \norm{u_2}_{X_{\la}^{s,\frac12}}) \norm{w}_{X_{\la}^{s,\frac12}}^2
\end{equation}
fails to hold for any $s \in \R$, when $u_1, u_2 \in X_{\la}^{s,\frac12}$ are solutions to \eqref{eq:lambda-kawahara_u} with $u_1(0) = u_2(0) = u_0$ and $w = u_1 -u_2$. However, the loss of regularity in \eqref{eq:energy_diff} (see Lemma \ref{lem:sym} below) is allowed in the estimate \eqref{eq:nonresonant estimate}, hence we can completely circumvent "the lack of the symmetry" issue. Such an argument was used in the author's previous work \cite{Kwak2018-1}. See Proposition \ref{prop:main_uniqueness} below for more details. 
\end{remark}

\medskip

For $\Sigma_3$, we take the time derivative in $ \wh{\varphi}_{med}(s,n_1)$. 

\medskip

Remark that the estimates below are analogous for the case when we choose another frequency mode in which the time derivative is taken. Thus, we omit the other cases. Furthermore, we extend $u(s)$ from $[0,t]$ to $\R$ with $u(s) = 0$ $(s > t$ or $ s < 0)$ for fixed $0 \le t \le 1$ (we refer to \cite{MN2005} for the precise definition of extension operator).

\medskip

Using \eqref{eq:time derivative}, one has
\[\begin{aligned}
\Sigma_3 =&~{} \frac{C_1}{\la^6}\sup_{N >1}\sup_{n \in I_N} \Bigg|\int_\R \sum_{\N_n^{\la}} \frac{n^2n_1}{H_{\la,0}}(|\wh{u}_{med}(n_1)|^2 - |\wh{u}_{0,med}(n_1)|^2)\wh{u}_{med}(n_1)\wh{u}_{med}(n_2)\wh{u}_{med}(n_3)\wh{u}(-n)  ds\Bigg| \\
&~{}+\frac{C_2}{\la^6}\sup_{N > 1}\sup_{n \in I_N}\Bigg| \int_\R \sum_{\N_n^{\la}}\frac{n^2}{H_{\la,0}}P_{N}\left( n_1\sum_{\N_{n_1}^{\la}} \wh{u}(n_{11})\wh{u}(n_{12})\wh{u}(n_{13}) \right)\wh{u}_{med}(n_2)\wh{u}_{med}(n_3)\wh{u}(-n)  ds\Bigg| \\
=:&~{} \Sigma_{3,1} + \Sigma_{3,2},
\end{aligned}\]
for some constants $C_1, C_2 \in \C$. 

\medskip

For $\Sigma_{3,1}$, we may assume that 
\begin{equation}\label{eq:maximum modulation}
|\tau - p_{\la}(n)| \gtrsim |H_{\la,0}|.
\end{equation}
We replace $\sup_{n \in I_N}$ by $\sum_{n \in I_N}$. Define $g_i$, similarly as before ( $\wh{g}(n) = |\mathcal F_{t}^{-1}(\bra{n}^s|\wt{u}_{med}(n)|)|$ ), and
\begin{equation}\label{h}
\wh{h}(n) = |\mathcal F_{t}^{-1}(\bra{\tau - p_{\la}(n)}^{\frac12}\bra{n}^s|\wt{u}(\tau,n)|)|.
\end{equation}
Since 
\[\frac{1}{\la}\left| |\wh{u}_{med}(n_1)|^2- |\wh{u}_{0,med}(n_1)|^2 \right| \le N^{-2s} (\norm{u_0}_{H_{\la}^s}^2 + \norm{u}_{H_{\la}^s}^2),\]
similarly as in \eqref{eq:nonresonant estimate}, one has from the Sobolev embedding (Lemma \ref{lem:sobolev}) that
\begin{equation}\label{eq:endpoint2}
\begin{aligned}
\Sigma_{3,1} \lesssim&~{} \la^{-\frac32}\int_{\R} (\norm{u_0}_{H_{\la}^s}^2 + \norm{u}_{H_{\la}^s}^2) \sup_{N > 1}N^{-(\frac32 + 6s)}\sum_{|n'| \ge \la^{-1}} \frac{1}{|n'|^{3/2}}\wh{g\overline{g}}(n')\wh{g\overline{h}}(-n') \; ds\\
\lesssim&~{} \la^{-1}\int_{\R}(\norm{u_0}_{H_{\la}^s}^2 + \norm{u}_{H_{\la}^s}^2)\norm{u}_{H_{\la}^s}^3 \norm{\ft^{-1}[\bra{\tau-\mu(n)}^{1/2}\bra{n}^s\wh{u}(n)]}_{L_{\la}^2} \; ds\\
\lesssim&~{} \la^{-1}\left(\norm{u}_{L_t^{10}H_{\la}^s}^5 + \norm{u_0}_{H_{\la}^s}^2\norm{u}_{L_t^{6}H_{\la}^s}^3\right) \norm{\ft^{-1}[\bra{\tau-p(n)}^{1/2}\bra{n}^s\wh{u}(n)]}_{L_{t,\la}^2} \\
\lesssim&~{} \la^{-1}(\norm{u_0}_{H^s}^2 + \norm{u}_{X_{\la}^{s,\frac12}}^2)\norm{u}_{X_{\la}^{s,\frac12}}^4,
\end{aligned}
\end{equation}
whenever $3/2+6s\ge0 \Rightarrow -1/4 \le s < 1/2$.

\medskip

\begin{remark}\label{rem:6-modu}
One has an identity in $\Sigma_{3,1}$ (in particular $|\wh{u}(n_1)|^2\wh{u}(n_1)$ part) that
\[\tau_{1,1} - p_{\la}(n_1) + \tau_{1,2} - p_{\la}(-n_1) + \tau_{1,3} - p_{\la}(n_1)+ \tau_{2} - p_{\la}(n_2) + \tau_{3} - p_{\la}(n_3) = \tau - p_{\la}(n) + H_{\la},\]
which ensures 
\[\max(|\tau_{1,j}-p_{\la}((-1)^{j+1}n_1)|, |\tau_k - p_{\la}(n_k)|, |\tau - p_{\la}(n)| ; j=1,2,3, k=2,3) \gtrsim |H_{\la,0}|.\]
\end{remark}

\begin{remark}\label{rem:choice of maximum modulation}
The estimate \eqref{eq:endpoint2} above does not be affected by the choice of the maximum modulation \eqref{eq:maximum modulation}, thus our choice, in addition to Remarks \ref{rem:6-modu} and \ref{rem:choice of maximum frequency}, does not lose the generality.
\end{remark}

\medskip

For $\Sigma_{3,2}$, we further decompose distributed functions $\wh{u}(n_{1,i})$ into $\wh{u}_{low}(n_{1,i}), \wh{u}_{med}(n_{1,i})$ and $\wh{u}_{high}(n_{1,i})$, $i=1,2,3$. Then, the followings are possible cases (up to the symmetry of frequencies):\footnote{The cases A, B and C are referred to as \emph{high-high-high-high}, \emph{high-high-low-low} and \emph{high-high-high-low}, respectively.}
\[\wh{u}_{med}(n_{11})\wh{u}_{med}(n_{12})\wh{u}_{med}(n_{13}), \tag{Case A}\]
\[\wh{u}_{med}(n_{11})\wh{u}_{low}(n_{12})\wh{u}_{low}(n_{13}), \tag{Case B-1}\]
\[\wh{u}_{med}(n_{11})\wh{u}_{high}(n_{12})\wh{u}_{high}(n_{13}), \tag{Case B-2}\]
\[\wh{u}_{low}(n_{11})\wh{u}_{high}(n_{12})\wh{u}_{high}(n_{13}), \tag{Case B-3}\]
\[\wh{u}_{med}(n_{11})\wh{u}_{med}(n_{12})\wh{u}_{low}(n_{13}), \tag{Case C-1}\]
\[\wh{u}_{high}(n_{11})\wh{u}_{high}(n_{12})\wh{u}_{high}(n_{13}).\tag{Case C-2}\]
Note that the \textbf{Case A} is the worst case, thus we only estimate $\Sigma_{3,2}$ under the restriction of \textbf{Case A}, see Remark \ref{positive}.
\medskip

\textbf{Case A} All comparable frequencies produce the new resonance in the quintic nonlinear interactions. The new resonant function defined by
\begin{equation}\label{eq:resonant relation1} 
\begin{aligned}
\wt{H}_{\la}:=&~{} p_{\la}(n) - p_{\la}(n_2) - p_{\la}(n_3) - \left(p_{\la}(n_{11}) + p_{\la}(n_{12}) + p_{\la}(n_{13})\right)\\
=&~{}\frac52 (n_1+n_2)(n_2+n_3)(n_3+n_1)\left(n_1^2 + n_2^2 + n_3 ^2 + n^2 + \frac65\beta\la^{-2} \right) \\
&~{}+ \frac52 (n_{11}+n_{12})(n_{12}+n_{13})(n_{13}+n_{11})\left(n_{11}^2 + n_{12}^2 + n_{13} ^2 + n_1^2 + \frac65\beta\la^{-2} \right)\\
&~{}-\frac{\mu n}{(2\pi \lambda)^2} |\wh{v}_{\lambda, 0}(n)|^2 + \sum_{j=1}^3 \frac{\mu n_j}{(2\pi \lambda)^2} |\wh{v}_{\lambda, 0}(n_j)|^2\\
&~{}-\frac{\mu n_1}{(2\pi \lambda)^2} |\wh{v}_{\lambda, 0}(n_1)|^2 + \sum_{j=1}^3 \frac{\mu n_{1,j}}{(2\pi \lambda)^2} |\wh{v}_{\lambda, 0}(n_{1,j})|^2
\end{aligned}
\end{equation}
does not guarantee the smoothing effect when the frequencies satisfy, for instance,
\begin{equation}\label{sextic resonance}
\begin{array}{lll}
n_1= N+a,& \quad n_2 = -N-a-b,& \quad n_3 = N+b,\\
n_{11} = N +a+b,& \quad n_{12}=-N-b,& \quad n_{13} = N,
\end{array}
\end{equation}
where $a,b \in \Z_{\la}$ with $a \neq 0$, $b \neq 0$ and $a+b \neq 0$. Hence, we do not have any advantage from the \emph{dispersive smoothing effect}
\[L_{max} :=\max(|\tau_{1k} - p_{\la}(n_{1k})|,|\tau_j - p_{\la}(n_j)|, |\tau - p_{\la}(n)| ; k=1,2,3, j= 2,3) \gtrsim |\wt{H}_{\la}|.\]

\medskip

We assume $|n_1+n_2| \sim N$. Let us define 
\begin{equation}\label{f}
\wt{f}(m) = \bra{m}^s|\wt{u}_{med}(m)|.
\end{equation} 
Then it suffices to consider
\begin{equation}\label{Sigma33}
\la^{-6}N^{-6s}\sup_{n \in I_N} \sum_{\N_n^{\la}} \sum_{\N_{n_1}^{\la}} \frac{1}{|n_2+n_3||n_3+n_1|}\wh{f}(n_{11})\wh{f}(n_{12})\wh{f}(n_{13})\wh{f}(n_2)\wh{f}(n_3)\wh{f}(-n),
\end{equation}
since $|H_{\la,0}| \gtrsim |n_1+n_2||n_2+n_3||n_3+n_1|N^2$. We change the variables as follows:
\begin{equation}\label{CoV}
\begin{array}{l}
n_2 + n_3 = n'\\
n_{12} + n_{13} = n''
\end{array}
\Longrightarrow
\begin{array}{lll}
n_2 = n_2,& \quad n_3 = n'-n_2,& \\
n_{11} = n-n'-n'',& \quad n_{12} = n_{12},& \quad n_{13} = n'' - n_{12}.
\end{array}
\end{equation}
A direct computation yield
\begin{equation}\label{eq:estimation-1}
\begin{aligned}
\eqref{Sigma33} =& \la^{-6}N^{-6s}\sup_{n \in I_N}\sum_{\substack{n',n'',n_2,n_{12} \\ |n'|, |n-n_2| \ge \la^{-1} }} \frac{1}{|n'||n-n_2|} \\
& \hspace{5em} \times \wh{f}(n-n'-n'')\wh{f}(n_{12})\wh{f}(n''-n_{12})\wh{f}(n_2)\wh{f}(n'-n_2)\wh{f}(n)\\
=& \la^{-5}N^{-6s}\sup_{n \in I_N}\sum_{\substack{n',n'',n_2 \\ |n'|, |n-n_2| \ge \la^{-1} }} \frac{1}{|n'||n-n_2|} \\
& \hspace{5em} \times \wh{f}(n-n'-n'')\wh{f^2}(n'')\wh{f}(n_2)\wh{f}(n'-n_2)\wh{f}(-n)\\
\lesssim& \la^{-2}N^{-6s} \norm{f}_{L_{\la}^1}\norm{f^2}_{L_{\la}^2}\norm{f}_{L_{\la}^2}\left(\frac{1}{\la}\sum_{|n'|\ge\la^{-1}} |n'|^{-2}\right)^{\frac12}\\
&\hspace{5em}\times \left(\frac{1}{\la^2}\sum_{n_2,n'}|\wh{f}(n_2)\wh{f}(n'-n_2)|^2\right)^{\frac12}\sup_{n \in I_N}\left(\frac{1}{\la}\sum_{|n_2-n|\ge\la^{-1}} |n_2-n|^{-2}\right)^{\frac12}\\
\lesssim& \la^{-\frac32}N^{-6s} \norm{f}_{L_{\la}^4}^2\norm{f}_{L_{\la}^2}^4.
\end{aligned}
\end{equation}
Thus, $L^4$ Strichartz estimate (Lemma \ref{lem:L4}) and Sobolev embedding (Lemma \ref{lem:sobolev}) yield
\begin{equation}\label{eq:estimation-3}
\Sigma_{3,2} \lesssim \la^{-\frac32}\sup_{N \ge 1} N^{-6s}\norm{f}_{L_{t,\la}^4}^2\norm{f}_{L_t^8(L_{\la}^2)}^4 \lesssim \la^{-\frac32}\norm{u}_{X_{\la}^{s,\frac12}}^6,
\end{equation}
whenever $0 \le s < \frac12$.

\begin{remark}\label{WLOG}
An analogous computation ensures that the assumption $|n_1+n_2| \sim N$ does not lose the generality. Indeed, using the following change of variables
\[\begin{array}{l}
n_1 + n_2 = n'\\
n_{12} + n_{13} = n''
\end{array}
\Longrightarrow
\begin{array}{lll}
n_1 = n'-n_2,& \quad n_2 = n_2,& \quad n_3 = n-n', \\
n_{11} = n'-n_2-n'',& \quad n_{12} = n_{12},& \quad n_{13} = n'' - n_{12}.
\end{array}\]
for the case when $|n_2+n_3| \sim N$, or performing \eqref{CoV} for the case when $|n_3 + n_1| \sim N$, we can follow \eqref{eq:estimation-1}, and hence we obtain \eqref{eq:estimation-3}.
\end{remark}

\begin{remark}\label{rem:threshold}
The $\mathcal{F}\ell^{\infty}$-smoothing estimate enables to achieve the local well-posedness at the endpoint regularity $s=0$, while the $\mathcal{F}\ell^1$-smoothing estimate holds on only sub-critical regularity regime $s > 0$. See \cite{Kwak2018-1, MPV2018} and \cite{TT2004, NTT2010, MT2018, OW2018} for the comparison. This approach is more important in this paper than others, since the local well-posedness in $L^2$ immediately ensures the global one, thanks to the conservation law \eqref{eq:L2}.

\medskip

On the other hand, our proof fails to control $\Sigma_{3,2}$ below $L^2$, due to \eqref{sextic resonance}. See also Remark 3.2 in \cite{NTT2010} for the similar phenomenon in the context of mKdV.
\end{remark}

\begin{remark}\label{rem:loss of regularity}
In \eqref{Sigma33}, once replacing $\sup_{n \in I_N}$ by $\sum_{n \in I_{N}}$, we will complete \eqref{eq:estimation-1} but with $0+$-derivative loss. More precisely, for $\underline{f} = \mathcal F_x^{-1}(|\wh{f}(n)|)$ ($\norm{f}_{L_{\la}^2} = \norm{\underline{f}}_{L_{\la}^2}$), the change the variables ($n_1+n_3 = n'$ and $n_{12} + n_{13} = n''$) yields
\[\begin{aligned}
&\frac{1}{\la^6}N^{-6s}\sum_{n \in I_N} \sum_{\N_n^{\la}} \sum_{\N_{n_1}^{\la}} \frac{1}{|n_2+n_3||n_3+n_1|}\wh{f}(n_{11})\wh{f}(n_{12})\wh{f}(n_{13})\wh{f}(n_2)\wh{f}(n_3)\wh{f}(-n)\\
\lesssim& \la^{-2}N^{-6s}(\log \la + \log N)\norm{f}_{L_{\la}^2}^2\norm{f^2}_{L_{\la}^2}\norm{\underline{f}^2}_{L_{\la}^1},
\end{aligned}\]
which implies
\[\frac{1}{\la^2}\sum_{n \in I_N}|n|\left||\wh{u}(n)|^2 - |\wh{u}_0(n)|^2\right| \lesssim \la^{-2+}N^{-6s}N^{0+}\norm{u}_{X_{\la}^{2,\frac12}}^6.\]
Analogously we have the $0+$ regularity loss below, if $\sup_{n \in I_N}$ is changed by $\sum_{n \in I_N}$. Consequently, even localized $\mathcal{F}\ell^1$ estimate seems not be enough for \emph{a priori} bound in $L^2$ level, but is available to show \emph{tightness}, i.e.,
\[\lim_{N \to \infty}\sum_{|n| \ge N}|\wh{u}(n)|^2 = 0.\]
See \eqref{eq:a priori 4}. 
\end{remark}

\begin{remark}\label{positive}
One can see that $\Sigma_{3,2} \lesssim \la^{-\frac12}\norm{u}_{X_{\la}^{s,\frac12}}^6$ holds for $s \ge 0$ not only under \textbf{Case A}, but also \textbf{Cases B--C}, since $\bra{n}^{-s} \lesssim N^{-s}$ holds for all cases, and the computation \eqref{eq:estimation-1} does not depend on the frequency supports.  
\end{remark}

\begin{remark}
One can control $\Sigma_{3,2}$ under the restrictions of \textbf{Case B--C}, for $s > -\frac13$. However, the smoothing effect for the negative regularity is no longer available in the proof of Local well-posedness, since $L^4$ estimate is not valid for $H_{\la}^s$ data, $s < 0$, thus we do not pursue it here.
\end{remark}
\bigskip

\textbf{Case II.} We deal with the \emph{high-low} interactions. Thanks to the symmetry, we may assume that $|n_1| \ge |n_2| \ge |n_3|$. Moreover, it is enough to consider the \emph{high $\times$ low $\times$ low $\Rightarrow$ high} interaction case, due to $n^2$ in \eqref{eq:hl0} below (see also Remark \ref{rem:low resulting frequency}). 

\medskip

We first address the regularity $s > 0$. We know from \eqref{eq:resonant function la} that
\[|H_{\la}| \gtrsim |n_2+n_3|N^4.\]
Without loss of generality\footnote{The estimate \eqref{eq:low result} below does not depend on the choice of the maximum modulation.}, we assume $|\tau - p_{\la}(n)| \gtrsim |H_{\la}|$. For
\[L_{max}^{-1} \lesssim |n_2+n_3|^{-\frac12}N^{-2},\]
we use the notations $f$ and $h$ defined as in \eqref{f} and \eqref{h}, respectively. We may assume that $|n_j| > 1$, otherwise we can cover the endpoint regularity $s =0$ as well. Thus, it suffices to estimate
\begin{equation}\label{eq:hl0}
\frac{1}{\la^4}\sup_{N>1}N^{-4s}\sup_{n \in I_N}\int_{\R}  \sum_{\N_n^{\la}} \frac{1}{|n_2+n_3|^{\frac12}}\wh{f}(n_1)\wh{f}(n_2)\wh{f}(n_3)\wh{h}(-n).
\end{equation}
For $s > 0$, a straightforward computation, in addition to the change of variables ($n_2+n_3 = n'$) and the Sobolev embedding (Lemma \ref{lem:sobolev}), gives
\begin{equation}\label{eq:low result}
\begin{aligned}
\eqref{eq:hl0}=&~{} \frac{1}{\la^4}\sup_{N>1}N^{-4s}\sup_{n \in I_N}\int_{\R}\sum_{\substack{n_2,n' \\ \la^{-1} \le |n'| \le N}} \frac{1}{|n'|^{\frac12}}\wh{f}(n-n')\wh{f}(n_2)\wh{f}(n'-n_2)\wh{h}(-n)\\
=&~{} \frac{1}{\la^3}\sup_{N>1}N^{-4s}\sup_{n \in I_N}\int_{\R}\sum_{\la^{-1} \le |n'| \le N} \frac{1}{|n'|^{\frac12}}\wh{f}(n-n')\wh{f^2}(n')\wh{h}(-n)\\
\lesssim&~{} \frac{1}{\la^2}\int_{\R} \sup_{N > 1} N^{-4s}(\log \la + \log N)\norm{f}_{L_{\la}^2}^3\norm{h}_{L_{\la}^2}\\
\lesssim&~{} \frac{1}{\la^2}\log \la \norm{f}_{L_t^6(L_{\la}^2)}^3\norm{h}_{L_{t,\la}^2}\\
\lesssim&~{} \frac{1}{\la^2}\log \la \norm{u}_{X_{\la}^{s,\frac12}}^4,
\end{aligned}
\end{equation}
whenever $s > 0$. 
\begin{remark}\label{rem:low resulting frequency}
The \emph{high $\times$ high $\times$ low $\Rightarrow$ low} interaction case can be dealt with by \eqref{eq:low result}, thanks to
\[\frac{n^2}{\bra{n_1}^s\bra{n_2}^s\bra{n_3}^s\bra{n}^sL_{\max}^{\frac12}} \lesssim \frac{1}{N^{2s}|n_1+n_2|^{\frac12}},\]
for $s > 0$.
\end{remark}

Now we address the end point regularity $s = 0$. In this case, we cannot obtain
\begin{equation}\label{eq:hhll0}
\frac{1}{\la^4}\sup_{N>1}\sup_{n \in I_N}\left| \int_{\R} n^2\sum_{\N_n^{\la}}  \wh{u}_{med}(n_1)\wh{u}_{low}(n_2)\wh{u}_{low}(n_3)\wh{u}(-n)  \; ds\right| \lesssim \norm{u}_{X_{\la}^{0,\frac12}},
\end{equation}
due to the logarithmic divergence appearing in \eqref{eq:low result}. To overcome it we again use the normal form reduction method. For the sake of simplicity, we do not push the regularity $s$ below $0$. Similarly as \eqref{eq:normal form}, we have
\begin{equation}\label{eq:normal form_hhll}
\begin{aligned}
\mbox{LHS of }\eqref{eq:hhll0} \le&~{} \frac{1}{\la^4}\sup_{N>1}\sup_{n \in I_N}\sum_{\N_n} \frac{n^2}{|H_{\la}|}\left| \wh{u}_{med}(t,n_1)\wh{u}_{low}(t,n_2)\wh{u}_{low}(t,n_3)\wh{u}(t,-n) \right|\\
&~{} + \frac{1}{\la^4}\sup_{N>1}\sup_{n \in I_N} \sum_{\N_n}\frac{n^2}{|H_{\la}|} \left|\wh{u}_{0,med}(n_1)\wh{u}_{0,low}(n_2)\wh{u}_{0,low}(n_3)\wh{u}_{0}(-n)\right|\\
&~{}+\frac{1}{\la^4}\sup_{N>1}\sup_{n \in I_N} \Bigg|\int_0^t \sum_{\N_n} \frac{n^2e^{-is(H_{\la})}}{iH_{\la}}\\
& \hspace{7em} \times \frac{d}{ds}\left(\wh{\varphi}_{med}(s,n_1)\wh{\varphi}_{low}(s,n_2)\wh{\varphi}_{low}(s,n_3)\wh{\varphi}(s,-n) \right) ds \Bigg|\\
=:&~{} \Xi_1 + \Xi_2 + \Xi_3.
\end{aligned}
\end{equation}
Remark that a direct computation gives
\[|H_{\la}| \gtrsim |n_2+n_3|N^4,\]
which is stronger than one in \textbf{Case I}. Thus, $\Xi_1$ and $\Xi_2$ are controlled, similarly as the estimates of $\Sigma_1$ and $\Sigma_2$, by
\[\Xi_1 + \Xi_2  \lesssim \la^{-2}\log \la\norm{u_0}_{L_{\la}^2}^4 + \la^{-2}\log \la\left(\norm{u_0}_{L_{\la}^2}^2 + \la^{\frac12} \norm{u}_{X_{\la}^{0,\frac12}}^4\right)^2.\]
Remark \ref{rem:boundary} is available to this estimate for the difference of two solutions. 

\medskip

Taking the time derivative to $n_1$ mode, one has 
\[\begin{aligned}
\Xi_3 =&~{} \frac{C_1}{\la^6}\sup_{N >1}\sup_{n \in I_N} \Bigg|\int_0^t \sum_{\N_n^{\la}} \frac{n^2n_1}{H_{\la,0}}(|\wh{u}_{med}(n_1)|^2 - |\wh{u}_{0,med}(n_1)|^2)\wh{u}_{med}(n_1)\wh{u}_{low}(n_2)\wh{u}_{low}(n_3)\wh{u}(-n)  ds\Bigg| \\
&~{}+\frac{C_2}{\la^6}\sup_{N > 1}\sup_{n \in I_N}\Bigg| \int_0^t \sum_{\N_n^{\la}}\frac{n^2}{H_{\la,0}}P_{N}\left( n_1\sum_{\N_{n_1}^{\la}} \wh{u}(n_{11})\wh{u}(n_{12})\wh{u}(n_{13}) \right)\wh{u}_{low}(n_2)\wh{u}_{low}(n_3)\wh{u}(-n)  ds\Bigg| \\
=:&~{} \Xi_{3,1} + \Xi_{3,2},
\end{aligned}\]
The replacement $\sup_{n \in I_N}$ by $\sum_{n \in I_N}$ is available for $\Xi_{3,1}$. The same computation as in \eqref{eq:low result} (but, here $\wh{f}(n) = |\wh{u}(n)|$) yields
\[\begin{aligned}
\Xi_{3,1} \lesssim&~{} \la^{-6}\int_0^t \left(\norm{u}_{L^2}^2+\norm{u_0}_{L^2}^2\right)\sup_{N>1}N^{-1}\sum_{\substack{n_1, n_2, n' \\ \la^{-1} \le |n'| \le N}} \frac{1}{|n'|}\wh{f}(n_1)\wh{f}(n_2)\wh{f}(n'-n_2)\wh{f}(-n_1 - n') \; ds\\
\lesssim&~{} \la^{-4}\log \la\int_0^t \left(\norm{u}_{L_{\la}^2}^2+\norm{u_0}_{L_{\la}^2}^2\right) \norm{f}_{L_{\la}^2}^4,
\end{aligned}\]
which, in addition to Lemma \ref{lem:sobolev}, implies
\[\Xi_{3,1} \lesssim \la^4 \log \la \left(\norm{u}_{X_{\la}^{0,\frac12}}^2+\norm{u_0}_{L^2}^2\right)\norm{u}_{X_{\la}^{0,\frac12}}^4.\]

\medskip

On the other hand, one can split the frequency relation among $n_{11}$, $n_{12}$ and $n_{13}$ into \textbf{Case A}--\textbf{Case C}. Under the relation presented in \textbf{Case B-1}, one cannot have an additional smoothing effect ($L_{max} \gtrsim |\wt{H}|$) in $\Xi_{3,2}$, where $\wt{H}_{\la}$ is defined as in \eqref{eq:resonant relation1}, due to the same reason in $\Sigma_{3,2}$ under \textbf{Case A}. However, \eqref{eq:estimation-1} is still available for $\Xi_{3,2}$ under \textbf{Case B-1}, thus we handle this case. In the other cases, the stronger $|H_{\la}|$ and additional dispersive smoothing effects enables us to estimate $\Xi_{3,2}$ more easily or similarly as the estimate of $\Sigma_{3,2}$. Thus we have 
\[\Xi_{3,2} \lesssim  \la^{-\frac32}\norm{u}_{X_{\la}^{0,\frac12}}^6.\]

\medskip

Contributions from the time derivative taken in the other modes in $\Xi_3$ could be dealt with similarly or easily, due to $|n_2|, |n_3| \ll |n_1| \sim |n|$. We skip the details. We remark that all computations established as in \textbf{Case I} are available for \textbf{Case II}, when $s=0$.

\medskip

The argument used in \textbf{Case II} always holds\footnote{A direct calculation for $s>0$ and the normal form method for $s =0$ are needed.} under the \emph{high $\times$ high $\times$ low $\Rightarrow$ high}\footnote{Remark \ref{rem:low resulting frequency} allows to deal with the \emph{high $\times$ high $\times$ high $\Rightarrow$ low} case by the same argument.} interaction case, we hence obtain the same result as in \textbf{Case II}. 

\medskip

Gathering all results in \textbf{Cases I, II} and \textbf{III}, we complete the proof of \eqref{eq:main}.  

\end{proof}

As an immediate corollary, we have 
\begin{corollary}\label{cor:main}
Let $\lambda \ge 1$. Assume that $\norm{v_{\lambda,0}}_{L_{\la}^2} \le \rho$, for a sufficient small $0 < \rho \ll 1$. Let $0 \le s < \frac12$, $t \in [0,1]$ and $u_0 \in C^{\infty}(\T_{\la})$. Suppose that $u$ is a real-valued smooth solution to \eqref{eq:lambda-kawahara_u} and $u \in X_{\la}^{s,\frac12}$. Then the following estimate holds:
\[\begin{aligned}
&\frac{1}{\la^2}\sup_{n \in \Z_{\la}} |n|\left||\wh{u}(t,n)|^2 - |\wh{u}_0(n)|^2 \right|\\
\lesssim&~{} \la^{-1}\log \la\left(\norm{u_0}_{H_{\la}^s}^4 + \left(\norm{u_0}_{H_{\la}^{s}}^2 + \la^{\frac12+2s} \norm{u}_{X_{\la}^{s,\frac12}}^4\right)^2\right) + \la^{-\frac32}\norm{u}_{X_{\la}^{s,\frac12}}^4 + \la^{-\frac32}\norm{u}_{X_{\la}^{s,\frac12}}^6.
\end{aligned}\]
Note that $\log \la$ is replaced by $1$ when $\la =1$.
\end{corollary}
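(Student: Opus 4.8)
The plan is to reduce the Corollary directly to Proposition \ref{prop:main} via the pointwise-in-$n$ identity \eqref{eq:reduction}, since all the genuine analytic content has already been established in the Proposition. First I would recall the evolution law \eqref{eq:energy} for the modulus $|\wh{u}(t,n)|^2$. This comes from differentiating $|\wh{u}(t,n)|^2 = \wh{u}(t,n)\overline{\wh{u}(t,n)}$ and substituting \eqref{eq:lambda-kawahara_u}: the linear term $ip_{\la}(n)\wh{u}(n)$ and the reduced resonant term $-\frac{\mu i n}{(2\pi\la)^2}(|\wh{u}(n)|^2-|\wh{u}_0(n)|^2)\wh{u}(n)$ each contribute a purely imaginary multiple of $|\wh{u}(n)|^2$ and therefore drop out upon taking twice the real part, leaving only the non-resonant sum:
\[
\pt |\wh{u}(t,n)|^2 = \frac{2\mu}{3(2\pi\la)^2}\, n\,\mbox{Im}\left[\sum_{\N_n^{\la}}\wh{u}(n_1)\wh{u}(n_2)\wh{u}(n_3)\wh{u}(-n)\right].
\]

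Next I would integrate this in time over $[0,t]$ and multiply both sides by $\frac{n}{\la^2}$, which produces exactly \eqref{eq:reduction} with $C = \frac{2\mu}{3(2\pi)^2}$:
\[
\frac{n}{\la^2}\big(|\wh{u}(t,n)|^2 - |\wh{u}_0(n)|^2\big) = \frac{C}{\la^4}\,\mbox{Im}\left[\int_0^t n^2 \sum_{\N_n^{\la}}\wh{u}(s,n_1)\wh{u}(s,n_2)\wh{u}(s,n_3)\wh{u}(s,-n)\;ds\right].
\]
Taking absolute values and then the supremum over $n\in\Z_{\la}$ on both sides gives
\[
\frac{1}{\la^2}\sup_{n\in\Z_{\la}}|n|\big||\wh{u}(t,n)|^2 - |\wh{u}_0(n)|^2\big| = |C|\,\sup_{n\in\Z_{\la}}\left|\frac{1}{\la^4}\mbox{Im}\left[\int_0^t n^2 \sum_{\N_n^{\la}}\wh{u}(s,n_1)\wh{u}(s,n_2)\wh{u}(s,n_3)\wh{u}(s,-n)\;ds\right]\right|.
\]

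Finally I would simply invoke Proposition \ref{prop:main} to bound the right-hand side, which reproduces the claimed estimate verbatim (with $\log\la$ replaced by $1$ when $\la=1$), up to the harmless constant $|C|$ absorbed into $\lesssim$. Because the passage from the Proposition to the Corollary is purely algebraic, there is no substantive obstacle here; the only point meriting care is the claim that the two unwanted contributions to $\pt|\wh{u}(n)|^2$ are purely imaginary. This follows from the reality of $u$, so that $\overline{\wh{u}(n)}=\wh{u}(-n)$, together with the facts that $p_{\la}(n)$ is real and the resonant coefficient $\frac{\mu n}{(2\pi\la)^2}(|\wh{u}(n)|^2-|\wh{u}_0(n)|^2)$ is real, so that after multiplication by $\overline{\wh{u}(n)}$ and by $i$ the real part vanishes.
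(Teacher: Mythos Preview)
Your proposal is correct and follows exactly the same route as the paper: rewrite $\frac{1}{\la^2}|n|\big||\wh{u}(t,n)|^2-|\wh{u}_0(n)|^2\big|$ via the identity \eqref{eq:reduction} (obtained by integrating \eqref{eq:energy}) and then apply Proposition~\ref{prop:main}. Your added justification that the linear and reduced-resonant contributions to $\pt|\wh{u}(n)|^2$ vanish is accurate and slightly more detailed than the paper's own one-line proof.
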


\begin{proof}
The proof follows from
\[\begin{aligned}
&\frac{1}{\la^2}\sup_{n \in \Z_{\la}} |n|\left||\wh{u}(t,n)|^2 - |\wh{u}_0(n)|^2 \right|\\
=&~{}C\sup_{n \in \Z_{\lambda}}\Bigg|\frac{1}{\lambda^4}\mbox{Im}\Bigg[ \int_0^t  n^2 \sum_{\N_n^{\la}}\wh{u}(s,n_1)\wh{u}(s,n_2)\wh{u}(s,n_3)\wh{u}(s,-n) \; ds \Bigg]\Bigg|
\end{aligned}\]
and Proposition \ref{prop:main}.
\end{proof}

\subsection{Difference of two solutions}
Let $\lambda \ge 1$. Assume that $\norm{v_{\lambda,0}}_{L_{\la}^2} \le \rho$, for a sufficient small $0 < \rho \ll 1$. Let $u_1, u_2$ be solutions to \eqref{eq:lambda-kawahara_u} with the same initial data $u_1(0) = u_2(0) = u_0$. Let $w = u_1 - u_2$. Then $w$ satisfies
\begin{equation}\label{eq:kawahara3}
\begin{aligned}
\pt \wh{w}(n) - ip_{\la}(n) \wh{w}(n) =&~{} -\frac{\mu i n}{(2\pi \lambda)^2}(|\wh{u}_1(n)|^2-|\wh{u}_0(n)^2|)\wh{w}(n)\\
&~{}-\frac{\mu i n}{(2\pi \lambda)^2}(|\wh{u}_1(n)|^2 - |\wh{u}_2(n)|^2)\wh{u}_2(n)\\
&~{}+  \frac{\mu i n}{3(2\pi \lambda)^2} \sum_{\N_n^{\la}}\wh{F}(u_1,u_2),
\end{aligned}
\end{equation}
where
\begin{equation}\label{F}
\wh{F}(u_1,u_2) = \wh{w}(n_1)\wh{u}_1(n_2)\wh{u}_1(n_3)+\wh{u}_2(n_1)\wh{w}(n_2)\wh{u}_1(n_3)+\wh{u}_2(n_1)\wh{u}_2(n_2)\wh{w}(n_3).
\end{equation}

\medskip

Corollary \ref{cor:main} and Lemma \ref{lem:nonres-trilinear} enable us to handle the first and third terms in the right-hand side of \eqref{eq:kawahara3}. Thus, it remains to control $\frac{n}{\la^2} \left( |\wh{u}_1(n)|^2 - |\wh{u}_2(n)|^2 \right)$ in the resonant terms. Using \eqref{eq:energy}, one reduces to dealing with
\begin{equation}\label{eq:res_tri}
\begin{aligned}
\frac{1}{\la^4}\int_0^t n^2\sum_{\N_n^{\la}}\bigg[&\wh{w}(n_1)\wh{u}_1(n_2)\wh{u}_1(n_3)\wh{u}_1(-n)+\wh{u}_2(n_1)\wh{w}(n_2)\wh{u}_1(n_3)\wh{u}_1(-n) \\
&+\wh{u}_2(n_1)\wh{u}_2(n_2)\wh{w}(n_3)\wh{u}_1(-n) + \wh{u}_2(n_1)\wh{u}_2(n_2)\wh{u}_2(n_3)\wh{w}(-n)\bigg] \; ds.
\end{aligned}
\end{equation}

We, without loss of generality, choose the second term in \eqref{eq:res_tri} in order to state and prove the main proposition in this section.\begin{proposition}\label{prop:main_uniqueness}
Let $\lambda \ge 1$. Assume that $\norm{v_{\lambda,0}}_{L_{\la}^2} \le \rho$, for a sufficient small $0 < \rho \ll 1$. Let $0 \le s < \frac12$, $t \in [0,1]$ and $u_0 \in C^{\infty}(\T_{\la})$. Suppose that $u_1$ and $u_2$ are a real-valued smooth solution to \eqref{eq:lambda-kawahara_u} with $u_1(0) = u_2(0) = u_0$ and $u_1, u_2 \in X_{\la}^{s,\frac12}$. Let $w = u_1 - u_2$. Then the following estimate holds:
\[\begin{aligned}
\frac{1}{\la^4}\sup_{n \in \Z} \Bigg| \int_0^t  n^2 \sum_{\N_n^{\la}}\wh{u}_2(s,n_1)\wh{w}(s,n_2)&\wh{u}_1(s,n_3)\wh{u}_1(s,-n) \; ds \Bigg|\\
\lesssim&~{} C(\norm{u_0}_{H_{\la}^s}, \norm{u_1}_{X_{\la}^{s,\frac12}}, \norm{u_2}_{X_{\la}^{s,\frac12}}) \norm{w}_{X_{\la}^{s,\frac12}}.
\end{aligned}\]
\end{proposition}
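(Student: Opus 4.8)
The plan is to run the normal-form reduction from the proof of Proposition \ref{prop:main} on the quadrilinear quantity in the statement, viewing it as the ``$w$ in one slot'' analogue of the symmetric expression treated there. As there, I would first localize the output frequency by $n\in I_N$ and split into interaction types. The genuine high-low configurations are disposed of exactly as in \textbf{Case II} of Proposition \ref{prop:main} (a direct estimate for $s>0$, an auxiliary normal form at $s=0$); since $w$ occupies a single frequency slot, every one of those multilinear bounds is linear in $\norm{w}_{X_\la^{s,\frac12}}$, with the remaining factors absorbed into $\norm{u_1}_{X_\la^{s,\frac12}}$ and $\norm{u_2}_{X_\la^{s,\frac12}}$. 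Hence the crux is the \emph{high $\times$ high $\times$ high $\Rightarrow$ high} regime. A simplification unavailable in Proposition \ref{prop:main} is that $w(0)=u_1(0)-u_2(0)=0$, so the boundary contribution at $s=0$ vanishes identically.

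In that regime I would integrate by parts in time as in \eqref{eq:normal form}, using $|H_\la|\sim|H_{\la,0}|\gtrsim|(n_1+n_2)(n_2+n_3)(n_3+n_1)|\,n^2$ on $\N_n^\la$. This leaves two pieces: a single boundary term at $s=t$ carrying the fixed-time factor $\wh{w}(t,n_2)$ alongside $\wh{u}_2(t,n_1),\wh{u}_1(t,n_3),\wh{u}_1(t,-n)$; and an integral term where $\frac{d}{ds}$ lands on one of the four modes. For the integral term I would substitute the evolution equations, using \eqref{eq:kawahara3} when the derivative hits the $w$-mode and \eqref{eq:lambda-kawahara_u} when it hits a $u_i$-mode. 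Inspecting \eqref{eq:kawahara3} and \eqref{F} shows every resulting sextic expression remains linear in $w$. These are then estimated verbatim as $\Sigma_{3,1}$ and $\Sigma_{3,2}$: after the change of variables \eqref{eq:estimation-1} one applies the $L^4$-Strichartz estimate (Lemma \ref{lem:L4}) and Sobolev embedding (Lemma \ref{lem:sobolev}). The worst configuration \eqref{sextic resonance} now enters with exactly one factor replaced by $w$, so it produces a term of size $\la^{-\frac32}\norm{w}_{X_\la^{s,\frac12}}$ times norms of $u_1,u_2$, which is of the claimed form.

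The main obstacle is the boundary term, precisely the place the symmetric proof breaks. There the fixed-time factors were controlled through Corollary \ref{cor:trilinear}, but by Remark \ref{rem:boundary} the analogous energy identity \eqref{eq:energy_diff} for $w$ fails for every $s$. The remedy is to trade the surplus decay in the non-resonant estimate: the computation \eqref{eq:nonresonant estimate} retains a factor $N^{-(1+4s)}$, which leaves room to \emph{spend} regularity on the $w$-factor. Concretely, I would estimate $\wh{w}(t,n_2)$ in a weaker norm $H_\la^{s'}$ with $s'<s$, for which a lossy energy bound $\norm{w(t)}_{H_\la^{s'}}^2\lesssim C(\norm{u_1}_{X_\la^{s,\frac12}},\norm{u_2}_{X_\la^{s,\frac12}})\norm{w}_{X_\la^{s,\frac12}}^2$ (Lemma \ref{lem:sym}) does hold, while the three $u_i(t)$ factors are bounded in $H_\la^s$ by Corollary \ref{cor:trilinear}. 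Since all frequencies are comparable to $N$, the loss $\bra{n_2}^{s-s'}\lesssim N^{s-s'}$ is swallowed by $N^{-(1+4s)}$ once $s'$ is close enough to $s$; taking the square root of the Lemma \ref{lem:sym} bound then yields the required linear dependence on $\norm{w}_{X_\la^{s,\frac12}}$. Verifying that this trade can be balanced across all subcases — i.e.\ that the admissible loss $s-s'$ never exceeds the available gain — is the delicate bookkeeping at the heart of the argument.
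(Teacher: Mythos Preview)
Your proposal is correct and follows essentially the same route as the paper: normal-form reduction with the $s=0$ boundary vanishing because $w(0)=0$, the integral term $\wt\Sigma_3$ handled exactly as $\Sigma_3$ via the evolution equations for $u_1,u_2,w$, and the surviving boundary term $\wt\Sigma_1$ controlled by placing $w(t)$ in a weaker Sobolev norm through Lemma~\ref{lem:sym} while the $u_i(t)$ factors are bounded via Corollary~\ref{cor:trilinear}. The paper makes the explicit choice $s'=-\tfrac12$ (this is precisely what Lemma~\ref{lem:sym} supplies), and the resulting loss $N^{s+\frac12}$ against the gain $N^{-(1+4s)}$ leaves $N^{-\frac12-3s}$, which is summable for all $s\ge 0$; so your phrase ``$s'$ close enough to $s$'' is unnecessarily cautious, but the mechanism you describe is exactly the one used.
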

\begin{remark}
The proof of Proposition \ref{prop:main_uniqueness} basically follows from the proof of Proposition \ref{prop:main}. The only difference is to estimate the boundary terms generated in the normal form process, as mentioned in Remark \ref{rem:boundary}. We only point this difference out in the proof of Proposition \ref{prop:main_uniqueness} below. 
\end{remark}

In order to handle the difficulty arising from the lack of the symmetry, we need the following lemma:
\begin{lemma}\label{lem:sym}
Let $\la \ge 1$. Assume that $\norm{v_{\lambda,0}}_{L_{\la}^2} \le \rho$, for a sufficient small $0 < \rho \ll 1$. Let $0 \le s < \frac12$, $t \in [0,1]$ and $u_0 \in C^{\infty}(\T_{\la})$. Suppose that $u_1$ and $u_2$ are solutions to \eqref{eq:lambda-kawahara_u} on $[-1,1]$ with $u_{1,0} = u_{2,0}$, and $u, v \in X_T^{s,\frac12}$. Let $w = u_1-u_2$. Then the following estimate holds:
\begin{equation}\label{eq:sym-1}
\norm{w(t)}_{H_{\la}^{-\frac12}}^2 \lesssim \la^{\frac12}(\norm{u_1}_{X_{\la}^{s,\frac12}}^2 + 2\norm{u_1}_{X_{\la}^{s,\frac12}}\norm{u_2}_{X_{\la}^{s,\frac12}} + \norm{u_2}_{X_{\la}^{s,\frac12}}^2)\norm{w}_{X_{\la}^{s,\frac12}}^2.
\end{equation}
\end{lemma}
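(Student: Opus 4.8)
The plan is to run the energy method directly on the equation \eqref{eq:kawahara3} for $w = u_1 - u_2$, but measured in the weaker space $H_\la^{-\frac12}$. The point of dropping to $H^{-\frac12}$ is that the weight $\bra{n}^{-1}$ exactly absorbs the single derivative $n$ carried by every term of the nonlinearity, since $\bra{n}^{-1}|n| \le 1$; this is the minimal loss of regularity for which the energy identity becomes a derivative-free multilinear expression. First I would compute $\pt|\wh{w}(n)|^2 = 2\,\mathrm{Re}[\overline{\wh{w}(n)}\,\pt\wh{w}(n)]$ using \eqref{eq:kawahara3}. Both the linear contribution $ip_{\la}(n)\wh{w}(n)$ and the first resonant contribution $-\frac{\mu i n}{(2\pi\la)^2}(|\wh{u}_1(n)|^2-|\wh{u}_0(n)|^2)\wh{w}(n)$ pair against $\overline{\wh{w}(n)}$ to give a purely imaginary multiple of $|\wh{w}(n)|^2$, hence drop out after taking real parts. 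Since $\wh{w}(0,n) = 0$, integrating in $s$ and summing with the weight $\frac{1}{\la}\bra{n}^{-1}$ gives
\[
\norm{w(t)}_{H_\la^{-\frac12}}^2 \sim \frac{1}{\la}\sum_{n}\bra{n}^{-1}\int_0^t 2\,\mathrm{Re}\Big[\big(\textup{2nd resonant} + \textup{non-resonant}\big)\,\overline{\wh{w}(n)}\Big]\,ds,
\]
so that only the second resonant term and the $\N_{NR}^\la$ term survive.

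For the non-resonant contribution I would bound, using $\bra{n}^{-1}|n|\le1$,
\[
\frac{1}{\la}\sum_n\bra{n}^{-1}\Big|\int_0^t\tfrac{\mu i n}{3(2\pi\la)^2}\sum_{\N_n^\la}\wh{F}(u_1,u_2)\,\overline{\wh{w}(n)}\,ds\Big| \lesssim \frac{1}{\la^3}\int_\R\sum_n\sum_{\N_n^\la}|\wh{F}(u_1,u_2)|\,|\wh{w}(n)|\,ds,
\]
where $\wh{F}$ is as in \eqref{F}. Passing to the majorants $\underline{f} := \ft^{-1}(|\wt{f}|)$ and undoing the convolutions turns each of the three pieces of $\wh{F}$ into a space-time integral of the form $\int_\R\int_{\T_\la}\underline{w}^2\,\underline{u}_i\,\underline{u}_j\,dx\,ds$; here the powers of $\la$ produced by the convolutions and the final Parseval pairing cancel the prefactor $\la^{-3}$ (coming from the $H^{-\frac12}$ normalization together with the nonlinearity). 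H\"older in $x$ and then in $s$, followed by the $L^4$-Strichartz estimate (Lemma \ref{lem:L4}, which is $\la$-independent) and $\norm{\cdot}_{X_\la^{0,b}}\le\norm{\cdot}_{X_\la^{s,\frac12}}$ for $s\ge0$ and $\tfrac{3}{10}<b\le\tfrac12$, controls this by $(\norm{u_1}_{X_\la^{s,\frac12}}+\norm{u_2}_{X_\la^{s,\frac12}})^2\norm{w}_{X_\la^{s,\frac12}}^2$; note this reproduces exactly the symmetric factor $(\norm{u_1}+\norm{u_2})^2$ appearing on the right of \eqref{eq:sym-1}, and carries no positive power of $\la$.

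For the resonant contribution I would first expand $|\wh{u}_1(n)|^2-|\wh{u}_2(n)|^2 = 2\,\mathrm{Re}(\wh{w}(n)\overline{\wh{u}_2(n)}) + |\wh{w}(n)|^2$, so that after using $\bra{n}^{-1}|n|\le1$ the term reduces to $\frac{1}{\la^3}\int_\R\sum_n\big(|\wh{w}(n)|^2|\wh{u}_2(n)|^2 + |\wh{w}(n)|^3|\wh{u}_2(n)|\big)\,ds$. Each of these is a diagonal-in-frequency sum; by Cauchy--Schwarz in $n$ together with Plancherel (and, for the cubic piece, $\norm{w}_{X_\la^{s,\frac12}}\le\norm{u_1}_{X_\la^{s,\frac12}}+\norm{u_2}_{X_\la^{s,\frac12}}$ to restore the quadratic-in-$w$ form), these are controlled by $(\norm{u_1}_{X_\la^{s,\frac12}}+\norm{u_2}_{X_\la^{s,\frac12}})^2\norm{w}_{X_\la^{s,\frac12}}^2$ with a power of $\la$ no larger than $\la^{\frac12}$. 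I expect this diagonal resonant sum to be the main technical obstacle: unlike the non-resonant piece it does not factor through clean convolutions, so one must estimate the pointwise-in-$n$ product directly (via $\norm{\cdot}_{\ell^\infty}\le\norm{\cdot}_{\ell^2}$ or an $L^4$-type bound) and track the $\la$-powers so that the total stays below $\la^{\frac12}$. Collecting the two surviving contributions and absorbing all powers of $\la$ into the stated factor $\la^{\frac12}$ (permissible since $\la\ge1$) yields \eqref{eq:sym-1}.
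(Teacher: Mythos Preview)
Your approach is essentially the paper's: compute $\pt|\wh w(n)|^2$ from \eqref{eq:kawahara3}, observe that the linear part and the first resonant term contribute purely imaginary multiples of $|\wh w(n)|^2$ and hence vanish, then integrate with the weight $\la^{-1}\bra{n}^{-1}$ and estimate the two surviving pieces separately. The handling of the diagonal resonant term is the same in both arguments (crude $\ell^\infty$--$\ell^2$ estimate on the quadruple product, then H\"older and Sobolev embedding in time), and both yield a factor $\la^{-1}$ there, so your worry that this is the main obstacle is misplaced.

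The one genuine difference is in the non-resonant piece. The paper simply discards the weight $\bra{n}^{-1}\le 1$ and invokes Lemma~\ref{lem:nonres-trilinear} (via the duality form underlying Corollary~\ref{cor:trilinear}); this produces the factor $\la^\varsigma=\la^{1/2}$ appearing in the statement. You instead majorize the non-resonant convolution by the full convolution, undo it to a physical-space quartic integral, and apply the $\la$-independent $L^4$-Strichartz estimate, obtaining no positive power of $\la$ at all. Both routes are correct; yours is more elementary and in fact shows that the stated $\la^{1/2}$ is not sharp, while the paper's route reuses already-established machinery.
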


\begin{proof}
Using \eqref{eq:kawahara3}, a direct calculation gives
\[\begin{aligned}
\pt|\wh{w}(n)|^2 =&~{} -\frac{2\mu i n}{(2\pi \lambda)^2}\mbox{Im}\left[\wh{u}_1(n)\wh{w}(-n)\wh{u}_2(n)\wh{w}(-n) \right] \\
&+\frac{2\mu i n}{3(2\pi \lambda)^2}\mbox{Im}\left[\sum_{\N_n^{\la}}\wh{F}(u_1,u_2)\wh{w}(-n) \right]\\
&=:A(t,n)+B(t,n),
\end{aligned}\]
for $\wh{F}(u_1,u_2)$ as in \eqref{F}. One immediately obtains
\[\sum_{n \in \Z_{\la}} \wh{u}_1(n)\wh{w}(-n)\wh{u}_2(n)\wh{w}(-n) \lesssim \la^2\norm{u_1(t)}_{L_{\la}^2}\norm{u_2(t)}_{L_{\la}^2}\norm{w(t)}_{L_{\la}^2}^2.\]
Hence, the H\"older inequality and Lemma \ref{lem:sobolev} yield
\[\begin{aligned}
\int_0^t \frac{1}{\la} \sum_{n\in \Z_{\la}} \bra{n}^{-1} A(s,n) \; ds &\lesssim \la^{-1} \norm{u_1}_{L_t^4L_{\la}^2}\norm{u_2}_{L_t^4L_{\la}^2}\norm{w}_{L_t^4L_{\la}^2}^2 \\
&\lesssim \la^{-1} \norm{u_1}_{X_{\la}^{0,\frac14}}\norm{u_2}_{X_{\la}^{0,\frac14}}\norm{w}_{X_{\la}^{0,\frac14}}^2.
\end{aligned}\]
On the other hand, by Lemma \ref{lem:nonres-trilinear}, we have
\[\begin{aligned}
\int_0^t \frac{1}{\la}\sum_{n\in \Z_{\la}} \bra{n}^{-1} B(s,n) \; ds \lesssim&~{} \int_0^t \frac{1}{\la}\sum_{n\in \Z_{\la}} B(s,n) \; ds \\
\lesssim&~{} \la^{\frac12}(\norm{u_1}_{X_{\la}^{0,\frac12}}^2 + \norm{u_1}_{X_{\la}^{0,\frac12}}\norm{u_2}_{X_{\la}^{0,\frac12}} + \norm{u_2}_{X_{\la}^{0,\frac12}}^2)\norm{w}_{X_{\la}^{0,\frac12}}^2.
\end{aligned}\]
Collecting all, one proves \eqref{eq:sym-1}.
\end{proof}

\begin{proof}[Proof of Proposition \ref{prop:main_uniqueness}]
In view of the proof of Proposition \ref{prop:main}, as mentioned again, our analysis does not rely on the symmetry of functions (or the structure of equation \eqref{eq:lambda-kawahara_u}), except for the estimate of the boundary term in the normal form process, in particular, an application of Corollary \ref{cor:trilinear} in \eqref{eq:boundary estimate}. Thus, we are going to show how to deal with this case compared to the estimates \eqref{eq:nonresonant estimate} and \eqref{eq:boundary estimate}.

\medskip

The normal form argument in addition to Remark \ref{rem:negligible factor} reduces to dealing with (see \eqref{eq:normal form})\footnote{The boundary term at $s=0$ ($\Sigma_2$ in \eqref{eq:normal form}) does not appear, due to $w(0,x) = 0$.}
\[\begin{aligned}
 & \frac{1}{\la^4}\sup_{N>1}\sup_{n \in I_N}  \sum_{\N_n^{\la}} \frac{n^2}{|H_{\la}|}\left| \wh{u}_2(t,n_1)\wh{w}(t,n_2)\wh{u}_1(t,n_3)\wh{u}_1(t,-n) \right|\\
+&~{} \frac{1}{\la^4}\sup_{N>1}\sup_{n \in I_N} \Bigg|\int_0^t \sum_{\N_n^{\la}} \frac{n^2}{iH_{\la}} \frac{d}{ds}\Big[\big(e^{-isp_{\la}(n_1)} \wh{u}_2(s,n_1)\big) \big(e^{-isp_{\la}(n_2)} \wh{w}(s,n_2)\big)\\
&~{} \hspace{14em} \times  \big(e^{-isp_{\la}(n_3)} \wh{u}_1(s,n_3) \big) \big(e^{isp_{\la}(n)} \wh{u}_1(s,-n) \big) \Big]\; ds \Bigg|\\
=:&~{} \wt{\Sigma}_1 + \wt{\Sigma}_3.
\end{aligned}\]
where $H_{\la}$ is defined as in \eqref{eq:resonant function la}, and $\wh{u}_1, \wh{u}_2$ and $\wh{w}$ are supported in $I_N$. The estimate of $\wt{\Sigma}_3$ is analogously dealt with as the estimate of $\Sigma_3$ in \textbf{Case I} in the proof of Proposition \ref{prop:main}. Indeed, using \eqref{eq:time derivative} for $u_1$ and $u_2$, or 
\[\begin{aligned}
&\pt \left( e^{-itp_{\la}(n)}\wh{w}(n) \right)\\
=&~{} e^{-itp_{\la}(n)} \left( \pt \wh{w}(n) - ip_{\la}(n) \wh{w}(n) \right)\\
=&~{}  -\frac{\mu i n}{(2\pi \lambda)^2}e^{-itp_{\la}(n)} \Bigg( (|\wh{u}_1(n)|^2-|\wh{u}_0(n)^2|)\wh{w}(n) + (|\wh{u}_1(n)|^2 - |\wh{u}_2(n)|^2)\wh{u}_2(n) \\
&+\frac{\mu i n}{3(2\pi \lambda)^2}  \sum_{\N_n^{\la}}\left[\wh{w}(n_1)\wh{u}_1(n_2)\wh{u}_1(n_3)+\wh{u}_2(n_1)\wh{w}(n_2)\wh{u}_1(n_3)+\wh{u}_2(n_1)\wh{u}_2(n_2)\wh{w}(n_3)\right] \Bigg),
\end{aligned}\]
for $w$, one can apply the exact same arguments used in \textbf{Case A--C} to $\wt{\Sigma}_3$ to obtain
\[\wt{\Sigma}_3 \lesssim \la^{-\frac12}C(\norm{u_1}_{X_{\la}^{s,\frac12}}, \norm{u_2}_{X_{\la}^{s,\frac12}}) \norm{w}_{X_{\la}^{s,\frac12}}.\]
Thus, it suffices to estimate $\wt{\Sigma}_1$. Compared to \eqref{eq:nonresonant estimate}, we perform an unfair distribution of derivatives to use Lemma \ref{lem:sym}. Let 
\[\begin{aligned}
&\wh{g}_1(n_1) = |\wh{u}_2(n_1)|, \hspace{2em} \wh{g}_2(n_2) = \bra{n_2}^{-\frac12}|\wh{w}(n_2)|;\\ 
&\wh{g}_3(n_3) = |\wh{u}_1(n_3)|, \hspace{2em} \wh{g}_4(-n) = |\wh{u}_1(-n)|.
\end{aligned}\]
We assume $|H|^{-1} \lesssim \la N^{-3}|n_1+n_2|^{-1}$\footnote{This assumption does not lose the generality, see Remark \ref{rem:choice of maximum frequency}.}. The change of variable $n'=n_1+n_2 \neq 0$ and a direct computation\footnote{In this case, we replace $\sup_{n \in I_N}$ by $\sum_{n \in I_N}$} yield
\begin{equation}\label{Sigma3}
\begin{aligned}
\wt{\Sigma}_1 \lesssim&~{} \la^{-3}\sup_{N\ge1}N^{-\frac12}\sum_{\substack{n_1,n_4\\ |n'| \ge \la^{-1}}} \frac{1}{|n'|}\wh{g}_1(n_1)\wh{g}_2(n'-n_1)\wh{g}_3(n_4-n')\wh{g}_4(-n_4)\\
\lesssim&~{} \la^{-1}\log \la \norm{u_2}_{L_{\la}^2}\norm{u_1}_{L_{\la}^2}^2 \norm{w}_{H_{\la}^{-\frac12}}.
\end{aligned}
\end{equation}
Corollary \ref{cor:trilinear} and Lemma \ref{lem:sym} enable to estimate the last terms in \eqref{Sigma3}, and hence we have
\[\wt{\Sigma}_1 \lesssim C(\la^{0+}, \norm{u_0}_{H_{\la}^s}, \norm{u_1}_{X_{\la}^{s,\frac12}}, \norm{u_2}_{X_{\la}^{s,\frac12}}) \norm{w}_{X_{\la}^{s,\frac12}},\]
for $s \ge 0$. An analogous argument holds true for $\wt{\Xi}_1$, which can be similarly defined as in \eqref{eq:normal form_hhll}. Thus, it completes the proof of Proposition \ref{prop:main_uniqueness}.
\end{proof}
From \eqref{eq:res_tri}, one immediately has
\begin{corollary}\label{cor:main_uniqueness}
Let $\lambda \ge 1$. Assume that $\norm{v_{\lambda,0}}_{L_{\la}^2} \le \rho$, for a sufficient small $0 < \rho \ll 1$. Let $0 \le s < \frac12$, $t \in [0,1]$ and $u_0 \in C^{\infty}(\T_{\la})$. Suppose that $u_1$ and $u_2$ are a real-valued smooth solution to \eqref{eq:lambda-kawahara_u} with $u_1(0) = u_2(0) = u_0$ and $u_1, u_2 \in X_{\la}^{s,\frac12}$. Let $w = u_1 - u_2$. Then the following estimate holds:
\[\begin{aligned}
\frac{1}{\la^2}\sup_{n \in \Z_{\la}} |n|\Big||\wh{u}_1(n)|^2-|\wh{u}_2(n)|^2\Big| \le C(\la^{0+}, \norm{u_0}_{H_{\la}^s}, \norm{u_1}_{X_{\la}^{s,\frac12}}, \norm{u_2}_{X_{\la}^{s,\frac12}}) \norm{w}_{X_{\la}^{s,\frac12}}.
\end{aligned}\]
\end{corollary}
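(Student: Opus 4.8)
The plan is to derive the claimed bound directly from the energy identity \eqref{eq:energy} together with Proposition \ref{prop:main_uniqueness}. First I would apply \eqref{eq:energy} to each of $u_1$ and $u_2$, integrate in $s$ over $[0,t]$, and subtract the two resulting identities. Since $u_1(0)=u_2(0)=u_0$, the initial contributions $|\wh{u}_1(0,n)|^2=|\wh{u}_2(0,n)|^2$ cancel, leaving
\[
|\wh{u}_1(t,n)|^2-|\wh{u}_2(t,n)|^2
=\frac{2\mu}{3(2\pi\la)^2}\,n\,\mbox{Im}\!\left[\int_0^t\sum_{\N_n^{\la}}\Big(\wh{u}_1(n_1)\wh{u}_1(n_2)\wh{u}_1(n_3)\wh{u}_1(-n)-\wh{u}_2(n_1)\wh{u}_2(n_2)\wh{u}_2(n_3)\wh{u}_2(-n)\Big)\,ds\right].
\]
Multiplying by $|n|/\la^2$ and setting $w=u_1-u_2$, the difference of the two quartic products telescopes into exactly the four summands displayed in \eqref{eq:res_tri}, each of which carries precisely one factor $\wh{w}$ (in one of the four frequency slots $n_1,n_2,n_3,-n$), while the three remaining factors are modes of $u_1$ or $u_2$.

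Next I would take the supremum over $n\in\Z_{\la}$ and bound the four summands separately. The term in which $\wh{w}$ occupies the $n_2$ slot, namely
\[
\frac{1}{\la^4}\sup_{n\in\Z}\Big|\int_0^t n^2\sum_{\N_n^{\la}}\wh{u}_2(s,n_1)\wh{w}(s,n_2)\wh{u}_1(s,n_3)\wh{u}_1(s,-n)\,ds\Big|,
\]
is controlled by $C(\norm{u_0}_{H_{\la}^s},\norm{u_1}_{X_{\la}^{s,\frac12}},\norm{u_2}_{X_{\la}^{s,\frac12}})\norm{w}_{X_{\la}^{s,\frac12}}$ directly by Proposition \ref{prop:main_uniqueness}. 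The three remaining terms, in which $\wh{w}$ sits in the $n_1$, $n_3$, or $-n$ slot, are handled by the identical argument: the proof of Proposition \ref{prop:main_uniqueness} uses only that a single factor equals $\wh{w}$, so that after the unfair redistribution of derivatives (assigning $\bra{\cdot}^{-\frac12}$ to the $w$-factor) Lemma \ref{lem:sym} converts the resulting $H_{\la}^{-\frac12}$ norm of $w$ into $\norm{w}_{X_{\la}^{s,\frac12}}$. Collecting the four bounds yields the stated estimate with constant $C(\la^{0+},\norm{u_0}_{H_{\la}^s},\norm{u_1}_{X_{\la}^{s,\frac12}},\norm{u_2}_{X_{\la}^{s,\frac12}})$.

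Since Proposition \ref{prop:main_uniqueness} already contains all of the analytic content---the normal form reduction, the boundary-term estimate via Lemma \ref{lem:sym}, and the smoothing bounds of \textbf{Cases I--III}---the only point that genuinely needs checking here is the symmetry claim of the previous paragraph. I do not expect a serious obstacle: in \textbf{Case I} the four frequencies are comparable, the four summands of \eqref{eq:res_tri} are interchanged by relabelling the frequency variables, and every estimate in the proof of Proposition \ref{prop:main_uniqueness} is symmetric under such relabelling, so the change of variables can always be arranged to place $\wh{w}$ in whichever slot receives the $\bra{\cdot}^{-\frac12}$ weight. Hence the corollary follows immediately from \eqref{eq:res_tri} and Proposition \ref{prop:main_uniqueness}.
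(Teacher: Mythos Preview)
Your proposal is correct and follows essentially the same approach as the paper: the paper derives the telescoped expression \eqref{eq:res_tri}, treats the second summand as representative (``without loss of generality'') in Proposition~\ref{prop:main_uniqueness}, and then states the corollary as an immediate consequence. Your explicit verification that the symmetry argument genuinely applies to all four summands is a faithful unpacking of the paper's WLOG claim.
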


\section{Global well-posedness in $L^2(\T)$: Proofs of Theorems \ref{thm:local0}, \ref{thm:local} and \ref{thm:global}}\label{sec:global}
\subsection{Short proof of Theorem \ref{thm:local0}}\label{sec:WP 12}
Let $s \ge \frac12$ be fixed. We recall the integral equation \eqref{Duhamel} associated to \eqref{eq:kawahara1} in the Fourier space as follows:
\begin{equation}\label{eq:solution map}
\wh{v}(n) = e^{itp_0(n)}\wh{v}_0(n) + \int_0^t e^{i(t-s)p_0(n)} \left(\wh{\N}_{R}(v) + \wh{\N}_{NR}(v)\right)(s,n) \; ds,
\end{equation}
for $\N_R(v)$ and $\N_{NR}(v)$ as in \eqref{eq:resonant term} and \eqref{eq:non-resonant term}, respectively.

\medskip

We denote by $\Gamma(v)$ the map defined as in \eqref{eq:solution map} (after time localization). Then, Lemmas \ref{lem:Xsb}, \ref{lem:resonant1} and \ref{lem:nonres-trilinear} yield
\[\norm{\Gamma (v)}_{Y_T^{s,b}} \le CT^{\frac12-b}\norm{v_0}_{H^s} + CT^{\beta}\norm{v}_{Y_T^{s,b}}^3\]
and
\[\norm{\Gamma (v_1) - \Gamma (v_2)}_{Y_T^{s,b}} \le CT^{\frac12-b}\norm{v_{0,1} - v_{0,2}}_{H^s} + 2CT^{\beta}\left(\norm{v_1}_{Y_T^{s,b}}^2 + \norm{v_2}_{Y_T^{s,b}}^2\right)\norm{v_1-v_2}_{Y_T^{s,b}},\]
for some $0 < \beta= \beta(s) $ and $\frac12 < b = b(s)$ satisfying
\begin{equation}\label{eq:bbeta}
1-2b+\beta < 0.
\end{equation}
Remark that it is possible to choose $b$ and $\beta$ satisfying \eqref{eq:bbeta}, see the proofs of Lemmas \ref{lem:resonant1} and \ref{lem:nonres-trilinear}.
Let $\norm{v_0}_{H^s} \le R$, for a fixed $R >0$. Choosing $T > 0$ satisfying
\[16C^3T^{1-2b+\beta}R^2 \le \frac12,\]
one can show that the map $\Gamma$ is a contraction on the set
\[\set{v \in Y_T^{s,b}: \norm{v}_{Y_T^{s,b}} \le 2CT^{\frac12-b}R},\]
which completes the proof of Theorem \ref{thm:local0}.

\subsection{Proof of Theorem \ref{thm:local}}
The proof of Theorem \ref{thm:local} is based on the standard energy method. We particularly follow the argument in \cite{TT2004}. The scaling argument is not required to be addressed for the regularity $s >0$, indeed, we have $L^4$ estimate without smallness condition, and the trilinear estimate and the smoothing effect are valid for $\la =1$, thus the standard compactness argument ensures the local well-posedness. Therefore, in what follows, we fix $s =0$.

\medskip

For given $R > 0$ and $v_0 \in L^2(\T)$, with $\norm{v_0}_{L^2(\T)} \le R$, we employ the scaling argument: let $v_{\lambda}(t,x) : = \lambda^{-2} v (\lambda^{-5} t, \lambda^{-1} x)$, $\lambda \ge 1$ be a solution to \eqref{eq:scaled kawahara} on $[0,1]$ with $v_{\la}(0,x) =: v_{\la,0}(x) := \la^{-2} v_0(\la^{-1}x)$, then $v$ is the solution to \eqref{eq:kawahara} on $[0,\la^{-5}]$. A straightforward computation gives
\[\norm{v_{\lambda,0}}_{L^2(\T_{\lambda})} = \lambda^{-\frac32}\norm{v_{0}}_{\dot{H}^s(\T)} \le \la^{-\frac32}R.\]
We will choose $\la_0 \gg  1$ at least satisfying $\la^{-\frac32}R \le \rho \ll1$, for all $\la \ge \la_0$, where $\rho$ is to be required in Lemma \ref{lem:L4}. We simply denote $v_{\la}$ and $v_{\la,0}$ by $u$ and $u_0$.

\subsubsection{Existence}\label{sec:local existence}
We first recall \eqref{eq:lambda-kawahara_u}
\begin{equation}\label{eq:kawahara_local}
\begin{aligned}
\pt \wh{u}(n) - ip_{\lambda}(n) \wh{u}(n) =&~{} -\frac{\mu i n}{(2\pi \lambda)^2}(|\wh{u}(n)|^2 - |\wh{u}_0(n)|^2) \wh{u}(n) \\
&+ \frac{\mu i}{3(2\pi \lambda)^2}n \sum_{\N_n^{\lambda}} \wh{u}(n_1)\wh{u}(n_2)\wh{u}(n_3).\\
=:&~{} \wh{\N}_{R}^*(u)(n) + \wh{\N}_{NR}^*(u)(n).
\end{aligned}
\end{equation}

Lemma \ref{lem:Xsb} (after the time localization by multiplying by the smooth cutoff function, but dropping it) gives
\[\norm{u}_{X_{\la}^{0,\frac12}} \lesssim \norm{u_0}_{L_{\la}^2} + \norm{\N_R^*(u)}_{L_t^2L_{\la}^2} + \norm{\N_{NR}^*(u)}_{X_\la^{0,-\frac12+\delta}},\]
for some $0 < \delta \ll 1$. On one hand, Lemma \ref{lem:nonres-trilinear} controls $\norm{\N_{NR}^*(u)}_{X_{\la}^{0,-\frac12+\delta}}$ by
\[\la^{\frac12-}\norm{u}_{X_{\la}^{s,\frac12}}^3.\]
On the other hand, a trivial estimate and Corollary \ref{cor:main} yield
\[\begin{aligned}
\norm{\N_R^*(u)}_{L_t^2L_{\la}^2} \lesssim&~{} \left(\sup_{t \in [-1,1]}\frac{1}{\la^2}\sup_{n \in \Z} |n| \left||\wh{u}(t,n)|^2 - |\wh{u}_0(n)|^2 \right| \right) \norm{u}_{L_2^2L_{\la}^2}\\
\lesssim&~{} \la \log \la\left(\norm{u_0}_{L_{\la}^2}^4 + \big(\norm{u_0}_{L_{\la}^2}^2 + \la^{\frac12}\norm{u}_{X_{\la}^{0,\frac12}}^4\big)^2 +  \la^{-\frac12}\norm{u}_{X_{\la}^{0,\frac12}}^4 + \la^{-\frac12}\norm{u}_{X_{\la}^{0,\frac12}}^6\right) \norm{u}_{X_{\la}^{0,\frac12}}.
\end{aligned}\]
Collecting all, we conclude
\begin{equation}\label{eq:a priori}
\begin{aligned}
\norm{u}_{X_{\la}^{0,\frac12}} \le&~{} C_0\norm{u_0}_{L_{\la}^2} + C_1\la^{\frac12-}\norm{u}_{X_{\la}^{s,\frac12}}^3 \\
&+C_1\la^{0+} \Bigg(\norm{u_0}_{L_{\la}^2}^4 + \big(\norm{u_0}_{L_{\la}^2}^2 + \norm{u}_{X_{\la}^{0,\frac12}}^4\big)^2 +  (1+ \norm{u}_{X_{\la}^{0,\frac12}}^2)\norm{u}_{X_{\la}^{0,\frac12}}^3\Bigg) \norm{u}_{X_{\la}^{0,\frac12}},
\end{aligned}
\end{equation}
for some universal constants $C_0, C_1 > 0$ independent on $\la$.

\medskip

For given $v_0 \in L^2(\T)$, from the density argument, there exists a sequence $\{v_{0}^{(j)}\} \subset C^{\infty}(\T)$ such that $v_{0}^{(j)} \to v_0$ in $L^2(\T)$ as $j \to \infty$. Choose $K= K(R)>0$ such that 
\[\norm{v_{0}^{(j)}}_{L^2}, \norm{v_0}_{L^2} \le K \hspace{1em} \mbox{for all} \hspace{0.5em}j \ge 1.\]
From Theorem \ref{thm:local0} in addition to the energy conservation law \eqref{Hamiltonian}, we have global solutions $v^{(j)}$ to \eqref{eq:kawahara} with initial data $v_{0}^{(j)}$. Choose $\la_1 \ge \la_0 \gg 1$ such that $\la^{-\frac32}K \le \rho$, for all $\la \ge \la_1$. Analogously, \eqref{eq:a priori} is valid for $v_{\la}^{(j)}$ (simply denoted by $u_{j}$, and $u_{0,j}$ denotes $v_{\la,0}^{(j)}$), $j \ge 1$.

\medskip

For each $j \ge 1$, let $p_{\la,j}(n)$ be the Fourier coefficient of the linear operator in \eqref{eq:kawahara_local} with $\frac{\mu n}{(2\pi \lambda)^2} |\wh{v}_{\lambda, 0}^{(j)}(n)|^2$. Let
\[X_j(\la) = \norm{u_j}_{X_{\la,j}^{0,\frac12}},\]
where $X_{\la,j}^{s,b}$ is the $X_{\la}^{s,b}$ space corresponding to $p_{\la,j}(n)$. From the continuity argument, it suffices to show that\footnote{For fixed $\la$, the time localized norm of $X_{\la}^{0,\frac12}$ is continuous in time, thus the claim implies that $X_j(\la) \le 2C_0\la^{-\frac32}K$ on $[-1,1]$.} \\

\medskip

\emph{there exists $\la \gg 1$ so that if $\norm{u_0}_{L_{\la}^2} \le \la^{-\frac32}K$ and $\norm{u}_{X_{\la}^{0,\frac12}} \le 4C_0\la^{-\frac32}K$, then $\norm{u}_{X_{\la}^{0,\frac12}} \le 2C_0\la^{-\frac32}K$}, \\

\medskip

where $C_0 > 0$ is as in \eqref{eq:a priori}. 

\medskip

Then, \eqref{eq:a priori} yields
\[\begin{aligned}
\norm{u}_{X_{\la}^{0,\frac12}} \le&~{} C_0\la^{-\frac32}K +C_1\la^{0+} \Bigg((\la^{-\frac32}K)^4 + \big((\la^{-\frac32}K)^2 +  (4C_0\la^{-\frac32}K)^4\big)^2\\
&+ (1+ (4C_0\la^{-\frac32}K) + (4C_0\la^{-\frac32}K)^3)(4C_0\la^{-\frac32}K)^3\Bigg) (4C_0\la^{-\frac32}K).
\end{aligned}\]
We finally choose $\la \ge \la_1\gg 1$ satisfying
\begin{equation}\label{eq:la cond}
2C_1\la^{0+}(\la^{-\frac32}K)^4 \le \frac18, \quad C_1\la^{\frac12-}(4C_0\la^{-\frac32}K)^2 \le \frac18 \quad \mbox{and} \quad 2C_1\la^{0+}(4C_0\la^{-\frac32}K)^3 \le \frac18
\end{equation}
to prove the claim\footnote{In general, it is natural that $\la^{0+}$ appears in the nonlinear estimate and the smoothing estimate under the $\la$-periodic setting. However, it is controllable, since a suitable norm of the rescaled initial data decays with respect to $\la$ rapidly compare to the growth of $\la^{0+}$.}. Note that \eqref{eq:la cond} is available for the uniqueness part as well.

\medskip

Fix $\la \gg 1$ as above, we now have the uniform bound
\begin{equation}\label{eq:a priori 2}
X_j(\la) \le \delta \ll 1,
\end{equation}
where $\delta:= 2C_0\la^{-\frac32}K$. Moreover, our choice of $\la$ ensures
\begin{equation}\label{eq:initial2}
\norm{u_{0,j}}_{L^2}, \norm{u_0}_{L^2} \le \delta \ll 1 \hspace{1em} \mbox{for all} \hspace{0.5em}j \ge 1.
\end{equation} 
\medskip

The rest of the proof is standard. To close the strong limit argument, we define the Dirichlet projection $\mathbb{P}_k$ for all positive integers $k$ by
\[\mathbb P_k f = \frac{1}{2\pi \la}\sum_{|n| \le k} \wh{f}(n)e^{inx}.\]
Let $u_{j,k} = \mathbb P_ku_j$. Then $u_{j,k}$ satisfies
\[\pt \wh{u}_{j,k}(n) - ip(n) \wh{u}_{j,k}(n) = \sum_{|n| \le k}\left( \wh{\N}_{R}^*(u_j)(n) + \wh{\N}_{NR}(u_j)(n)\right),\]
with $u_{j,k}(0) = \mathbb P_{k}u_{0,j}$. Remark from  \eqref{eq:a priori 2} that 
\begin{equation}\label{eq:a priori 3}
\norm{u_{j,k}}_{X_T^{s,\frac12}} \le \delta \ll 1, \quad j,k \ge 1.
\end{equation}

We are now ready to pass to the (strong) limit. Let $\varepsilon>0$ be given. The proof of  Proposition \ref{prop:main} in addition to Remark \ref{rem:loss of regularity}, \eqref{eq:initial2} and \eqref{eq:a priori 3} ensures
\[\frac{1}{\la^2}\sum_{n \in I_N}|n|\Big||\wh{u}_j(n)|^2-|\wh{u}_{0,j}(n)|^2\Big| \lesssim \la^{0+}N^{0+},\]
which implies
\begin{equation}\label{eq:a priori 4}
\begin{aligned}
\norm{(I-\mathbb P_k)u_j(t)}_{L_{\la}^2}^2 &\lesssim \frac{1}{\la}\sum_{|n| \ge k}|\wh{u}_{0,j}|^2 + \frac{1}{\la}\sum_{N > k}N^{-1}\sum_{n \in I_N}|n|\Big||\wh{u}_j(n)|^2-|\wh{u}_{0,j}(n)|^2\Big| \\
&\le C\left( \norm{(I-\mathbb P_k)u_{0,j}}_{L_{\la}^2}^2 + k^{(-1)-}\la^{1+}\right),
\end{aligned}
\end{equation}
for $C > 0$ and where $I$ is the identity operator. On the other hand, the fact that $u_{0,j} \to u_0$, and \eqref{eq:a priori 4} guarantee that there exists $M > 0$ such that for all $j \ge 1$,
\begin{equation}\label{eq:tightness}
\norm{(I-\mathbb P_k)u_j}_{L_{\la}^2} < \varepsilon
\end{equation}
holds true when $k > M$. Precisely, we can choose $N_0 > 0$ such that $\norm{u_0 - u_{0,j}}_{L_{\la}^2} < \varepsilon/(2\sqrt{2C})$ holds for $j > N_0$. Fix $N_0 > 0$. Then, the $L_{\la}^2$-boundedness of $u_{0,j}$ ensures that for each $1 \le j \le N_0$, there exist $M_j > 0$, $j=1,\cdots, N_0$ such that 
\begin{equation}\label{j<N}
\norm{(I- \mathbb P_k)u_{0,j}}_{L_{\la}^2} < \varepsilon/(\sqrt{2C_0}),  \quad k > M_j, \; 1 \le j \le N_0.
\end{equation}
An analogous argument yields that there exists $M_0 > 0$ such that $k > M_0$ implies $\norm{(I-\mathbb P_k)u_{0}}_{L_{\la}^2} < \varepsilon/(2\sqrt{2C})$ and $k^{(-1)+}\la^{1+} < \varepsilon^2/(2C)$. Let $M := \max(M_0, M_j:1 \le j \le N_0)$ be fixed. Then, for $k > M$, we conclude $\norm{(I-\mathbb P_k)u_{0,j}}_{L_{\la}^2} \le \frac{\varepsilon}{\sqrt{2C_0}}$ for all $j \ge 1$, thanks to \eqref{j<N} and
\[\norm{(I-\mathbb P_k)u_{0,j}}_{L_{\la}^2} \le \norm{u_{0,j}-u_0}_{L_{\la}^2} + \norm{(I-\mathbb P_k)u_{0}}_{L_{\la}^2},\]
which, in addition to \eqref{eq:a priori 4}, implies \eqref{eq:tightness}.

\medskip

Arzel\`a-Ascoli compactness theorem and the diagonal argument yield that for each $\ell \ge 1$, there exists a subsequence $\set{u_{j',j'}} \subset \set{u_{j,k}}$ (denoted by $u_j$) such that
\[\norm{\mathbb P_{\ell}(u_j - u_k)}_{C([-1,1];L_{\la}^2)} \to 0, \quad j,k \to \infty,\]
holds. Therefore, we have a solution $u$ to \eqref{eq:kawahara_local} on $[-1,1]$ satisfying
\begin{equation}\label{eq:solution class}
u \in C([-1,1];L_{\la}^2) \cap X_{\la}^{0,\frac12}, \quad \norm{u}_{X_{\la}^{0,\frac12}} \le \delta \ll 1, \quad \norm{u_j-u}_{C([-1,1];L_{\la}^2)} \to 0, j \to \infty.
\end{equation}

\subsubsection{Completion of the proof of Theorem \ref{thm:local}: Uniqueness, continuity of the flow map and return to \eqref{eq:kawahara}}
Recall \eqref{eq:kawahara3}
\[\pt \wh{w}(n) - ip(n) \wh{w}(n) = \wh{\N}_{R}^*(u_1,u_2,w)(n) + \wh{\N}_{NR}^*(u_1,u_2,w)(n),\]
for $u_1$ and $u_2$ are solutions to \eqref{eq:kawahara_local} satisfying \eqref{eq:solution class} with initial data $u_1(0) = u_2(0) = u_0$, and $w = u-v$. Here $\wh{\N}_{R}^*(u_1,u_2,w)(n)$ and $\wh{\N}_{NR}^*(u_1,u_2,w)(n)$ are explicitly given by
\[\wh{\N}_{R}^*(u_1,u_2,w)(n) = -\frac{\mu i n}{(2\pi \lambda)^2}\left((|\wh{u}_1(n)|^2-|\wh{u}_0(n)^2|)\wh{w}(n) + (|\wh{u}_1(n)|^2 - |\wh{u}_2(n)|^2)\wh{u}_2(n)\right)\]
and
\[\wh{\N}_{NR}^*(u_1,u_2,w)(n) =  \frac{\mu i n}{3(2\pi \lambda)^2} \sum_{\N_n}e^{it\phi(u_0)}\wh{F}(u_1,u_2),\]
for $\wh{F}(u_1,u_2)$ as in \eqref{F}. Similarly as before, the standard $X^{s,b}$ analysis yields
\[\norm{u}_{X_{\la}^{0,\frac12}} \lesssim \norm{u_0}_{L_{\la}^2} + \norm{\N_{R}^*(u_1,u_2,w)}_{L_t^2L_{\la}^2} + \norm{\N_{NR}^*(u_1,u_2,w)}_{X_{\la}^{0,-\frac12+\delta}}.\]
Moreover, Corollaries \ref{cor:main} and \ref{cor:main_uniqueness} and Lemma \ref{lem:nonres-trilinear} yield
\[\begin{aligned}
\norm{\N_{R}^*(u_1,u_2,w)(n)}_{L_t^2L_{\la}^2} \lesssim&~{} \left(\sup_{t \in [-1,1]}\frac{1}{\la^2}\sup_{n \in \Z_{\la}} n\left(|\wh{u}_1(t,n)|^2 - |\wh{u}_0(n)|^2 \right)\right)\norm{w}_{X_{\la}^{0,\frac12}} \\
&+ \left(\sup_{t \in [-1,1]}\frac{1}{\la^2}\sup_{n \in \Z_{\la}} n\left(|\wh{u}_1(t,n)|^2 - |\wh{u}_2(n)|^2 \right)\right)\norm{u_2}_{X_{\la}^{0,\frac12}}\\
\lesssim&~{}+2C_1\la^{0+}(2C_0\la^{-\frac32}K)^3\norm{w}_{X_{\la}^{0,\frac12}}
\end{aligned}\]
and
\[\norm{\N_{NR}^*(u_1,u_2,w)}_{X_T^{s,-\frac12+\delta}} \lesssim \la^{\frac12-}(2C_0\la^{-\frac32}K)^2\norm{w}_{X_{\la}^{0,\frac12}},\]
respectively.

\medskip

Collecting all with \eqref{eq:la cond}, one concludes
\[\norm{w}_{X_{\la}^{0,\frac12}} \le c\norm{w}_{X_{\la}^{0,\frac12}},\]
for some $0 < c < 1$, which implies $w \equiv 0$ on $[-1,1]$.

\medskip

The proof of the continuous dependence of the flow map from initial data to solutions is analogous to the proof of the existence of solutions. The uniqueness of solutions ensures that all convergent subsequences in the sense of \eqref{eq:solution class} have the same limit, thus complete this part.

\subsection{Global well-posedness: Proof of Theorem \ref{thm:global}}
The global well-posedness of \eqref{eq:kawahara} immediately follows from the conservation law 
\begin{equation}\label{eq:conservation}
\int_{\T} v^2(t,x) \; dx = \int_{\T} v^2(0,x) \; dx.
\end{equation}
The conserved quantity \eqref{eq:conservation} ensures $v(\la^{-5}) = v(0)$, for fixed $\la$ chosen in the local theory. Thus, we repeat the local theory on $[\la^{-5}, 2\la^{-5}]$ and further, we, then, obtain the global well-posedness.

\section{Unconditional uniqueness: Proof of Theorem \ref{thm:unconditional}}\label{sec:unconditional}
The aim is to prove that $X_T^{0,\frac12}$ space designed as a solution space in the previous section is large enough to contain $C([0,T];H^s(\T))$, $s > \frac12$ such that the uniqueness in $X_T^{0,\frac12}$ ensures the unconditional  uniqueness in $H^s$, $s > \frac12$ via the interpolation argument.

\medskip

We start with an essential nonlinear estimate in $H^{-s}$.
\begin{lemma}\label{lem:H-s}
Let $s > \frac12$ and $v \in C([0,T];H^s(\T))$ be a solution to \eqref{eq:kawahara la=1}. Then we have
\begin{equation}\label{eq:H-s_1}
\norm{\mathcal F_x^{-1}(n(|\wh{v}(n)|^2 - |\wh{v}_0(n)|^2) \wh{v}(n))}_{L_T^{\infty}H^{-s}} \lesssim \norm{v}_{L_T^{\infty}H^s}^3
\end{equation}
and
\begin{equation}\label{eq:H-s_2}
\norm{\mathcal F_x^{-1}(n \sum_{\N_n} \wh{v}(n_1)\wh{v}(n_2)\wh{v}(n_3))}_{L_T^{\infty}H^{-s}} \lesssim \norm{v}_{L_T^{\infty}H^s}^3.
\end{equation}
\end{lemma}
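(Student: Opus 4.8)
The plan is to prove both estimates by purely elementary Fourier-side manipulations, exploiting the fact that $s > \tfrac12$ gives summability of $\bra{n}^{-2s}$ and, equivalently, an algebra property for $H^s(\T)$. Throughout I would work with the Fourier coefficients directly, bound the multiplier $|n|$ against $\bra{n}^{s}$ (legal since the output is measured in $H^{-s}$, so one derivative plus the weight $\bra{n}^{-s}$ yields roughly $\bra{n}^{s-2s}=\bra{n}^{-s}$), and reduce everything to the discrete convolution estimate that underlies the Banach-algebra structure of $H^s$ for $s>\tfrac12$.

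For the resonant estimate \eqref{eq:H-s_1}, the key observation is that the relevant frequency configuration is diagonal: the term is $n\,(|\wh{v}(n)|^2-|\wh{v}_0(n)|^2)\,\wh{v}(n)$, supported at the single frequency $n$, so there is no convolution to resolve. First I would bound $\big||\wh{v}(n)|^2-|\wh{v}_0(n)|^2\big| \le |\wh{v}(n)|^2 + |\wh{v}_0(n)|^2$ and use $v_0 = v(0)$ so that $\norm{v_0}_{H^s}\le \norm{v}_{L_T^\infty H^s}$. Then
\[
\norm{\mathcal F_x^{-1}(n(|\wh{v}(n)|^2 - |\wh{v}_0(n)|^2) \wh{v}(n))}_{H^{-s}}^2
= \frac{1}{2\pi}\sum_{n} \bra{n}^{-2s} n^2 \big||\wh{v}(n)|^2 - |\wh{v}_0(n)|^2\big|^2 |\wh{v}(n)|^2 .
\]
Bounding $\bra{n}^{-2s}n^2 \le \bra{n}^{2-2s}$ and controlling the supremum of $\bra{n}^{s-1}|\wh{v}(n)|$ by $\norm{v}_{H^s}$ (valid since $s>\tfrac12$, indeed since $s>1$ would give even more room; for $\tfrac12<s$ one instead keeps the sixth-power structure and uses $\ell^\infty \hookrightarrow$ from $\ell^2$ weighted), one collapses the whole expression to a product of three $H^s$-norms. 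I would take the $L_T^\infty$ norm at the end, which is harmless since every bound is uniform in $t$.

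For the non-resonant estimate \eqref{eq:H-s_2}, the standard route is the $H^s$-algebra estimate. First I would write the left side in $H^{-s}$ via duality against $g$ with $\norm{g}_{H^s}\le1$, landing on
\[
\frac{1}{(2\pi)^2}\sum_{n}\sum_{\N_n} |n|\,\bra{n}^{-s}\,|\wh{v}(n_1)||\wh{v}(n_2)||\wh{v}(n_3)|\,|\wh{g}(n)| .
\]
Since $s>\tfrac12$, I would distribute the weight $\bra{n}^{-s}|n|\le \bra{n}^{1-s}$ and then use that on $\N_n$ one has $\bra{n}\lesssim \bra{n_1}\bra{n_2}\bra{n_3}$ together with the elementary fact $\sum_{n_1+n_2+n_3=n}\prod \bra{n_j}^{-s}\lesssim \bra{n}^{-s}$ (the discrete analogue of the $H^s$ algebra property, requiring $s>\tfrac12$). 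Applying Cauchy--Schwarz and Young's inequality for convolutions then yields the product $\norm{v}_{H^s}^3 \norm{g}_{H^s}$, and taking the supremum over such $g$ and over $t$ finishes the bound.

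The main obstacle I anticipate is the single derivative $|n|$ in the nonlinearity competing against the weight $\bra{n}^{-s}$: one only gains $\bra{n}^{-s+1}$, so for $s$ close to $\tfrac12$ the decay is weak, and the whole scheme hinges on the combinatorial gain $\bra{n}\lesssim \max_j\bra{n_j}$ on $\N_n$ transferring enough weight onto the three input factors. Verifying this weight-transfer carefully (distinguishing which input frequency is comparable to $n$, so that at least one factor absorbs the extra $\bra{n}$) is the delicate point; the summability $\sum_n \bra{n}^{-2s}<\infty$ for $s>\tfrac12$ is exactly what makes it close, and explains why the threshold is $s>\tfrac12$ rather than lower.
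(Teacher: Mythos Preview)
Your argument for \eqref{eq:H-s_1} is correct and coincides with the paper's: the diagonal structure lets one bound $\bra{n}^{-s}|n|\,|\wh v(n)|^3$ in $\ell^2$ directly, and this works already for $s\ge\tfrac14$.

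For \eqref{eq:H-s_2} there are two problems. First, a bookkeeping slip: in the duality pairing with $g\in H^s$ the factor $\bra{n}^{-s}$ should not appear explicitly (it is already encoded in $\norm{g}_{H^s}$), so the multiplier you must handle is $|n|$, not $|n|\bra{n}^{-s}$. Second, and more seriously, the ``elementary fact''
\[
\sum_{n_1+n_2+n_3=n}\bra{n_1}^{-s}\bra{n_2}^{-s}\bra{n_3}^{-s}\ \lesssim\ \bra{n}^{-s}
\]
is \emph{false} for $\tfrac12<s<1$: the region $|n_j|\sim |n|/3$ already contributes $\sim\bra{n}^{2-3s}$, which beats $\bra{n}^{-s}$ unless $s\ge1$. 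The $H^s$-algebra property is an $\ell^2$ statement, $\norm{\bra{n}^s(\wh f*\wh g)}_{\ell^2}\lesssim\norm{\bra{n}^s\wh f}_{\ell^2}\norm{\bra{n}^s\wh g}_{\ell^2}$, not a pointwise kernel bound; it neither follows from nor implies the inequality you wrote.

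That said, your instinct is sound and the route is easily salvaged. Setting $\wh V(n)=|\wh v(n)|$ one has $|n|\big|\sum_{\N_n}\wh v(n_1)\wh v(n_2)\wh v(n_3)\big|\le |n|\,\wh{V^3}(n)$, hence
\[
\norm{\N_{NR}(v)}_{H^{-s}}\ \le\ \norm{V^3}_{H^{1-s}}\ \le\ \norm{V^3}_{H^{s}}\ \lesssim\ \norm{V}_{H^s}^3\ =\ \norm{v}_{H^s}^3,
\]
using $1-s\le s$ for $s\ge\tfrac12$ and the genuine algebra inequality for $s>\tfrac12$. This is in fact shorter than the paper's proof, which carries out a five-case high/low frequency decomposition (high--high--high $\Rightarrow$ high, low--high--high $\Rightarrow$ high, etc.) and closes each case by Cauchy--Schwarz together with $\sum_n|n|^{3-8s}<\infty$. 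Your final paragraph --- placing the extra $\bra{n}$ on the input of maximal frequency --- is precisely the mechanism behind the algebra inequality, so you were close; but the intermediate pointwise bound as stated must be dropped.
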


\begin{proof}
A direct computation gives
\[\mbox{LHS of } \eqref{eq:H-s_1} \lesssim \norm{n^{1-s}(|\wh{v}(n)|^2 - |\wh{v}_0(n)|^2) \wh{v}(n)}_{\ell_n^2} \lesssim \norm{u}_{L_T^{\infty}H^s(\T)}^3,\]
for $s \ge \frac14$. On the other hand, by the duality argument, one reduces the left-hand side of \eqref{eq:H-s_2} as 
\begin{equation}\label{eq:H-s_2.1}
 \sum_{n, \N_n}n^{1-s}\wh{v}(n_1)\wh{v}(n_2)\wh{v}(n_3) \wh{g}(-n),
\end{equation}
for $g \in L^2$ with $\norm{g}_{L^2} \le 1$. Without loss of generality, we assume $|n_1| \le |n_2| \le |n_3|$. We split the summation over frequencies into several cases.

\medskip

\textbf{Case I.} (high $\times$ high $\times$ high $\Rightarrow$ high). We further assume that $|n_1| \sim |n_3| \sim|n|$. Then, the Cauchy-Schwarz inequality yields
\[\begin{aligned}
\eqref{eq:H-s_2.1} \lesssim&~{} \sum_{\substack{n,n_1,n_2 \\ |n_1| \sim |n_2| \sim|n|}} |n|^{1-4s} \wh{f}(n_1)\wh{f}(n_2)\wh{f}(n_3) |\wh{g}(-n)|\\
\lesssim&~{} \left(\sum_{n \in \Z \setminus \set{0}} |n|^{3- 8s} \right)^{\frac12} \norm{f}_{L^2}^3\norm{g}_{L^2}\\
\lesssim&~{} \norm{v}_{L_T^{\infty}H^s}^3,
\end{aligned}\]
for $s > \frac12$, where $\wh{f}(n) = \bra{n}^s|\wh{v}(n)|$.

\medskip

\textbf{Case II-a.} (low $\times$ high $\times$ high $\Rightarrow$ high). We assume that $|n_1| \ll |n_2| \sim |n_3| \sim |n|$. A similar argument yields
\[\begin{aligned}
\eqref{eq:H-s_2.1} \lesssim&~{} \sum_{\substack{n,n_1,n_2 \\ |n_1| \ll |n_2| \sim|n|}} |n|^{1-3s}|n_1|^{-s} \wh{f}(n_1)\wh{f}(n_2)\wh{f}(n_3) |\wh{g}(-n)|\\
\lesssim&~{} \sum_{n_1 \in \Z \setminus \set{0}}\left(\sum_{|n_1| \ll |n|} |n|^{2- 6s} \right)^{\frac12}|n_1|^{-s}\wh{f}(n_1)\norm{f}_{L^2}^2\norm{g}_{L^2}\\
\lesssim&~{} \left(\sum_{n_1 \in \Z \setminus \set{0}} |n_1|^{3- 8s} \right)^{\frac12}\norm{v}_{L_T^{\infty}H^s}^3,
\end{aligned}\]
for $s > \frac12$, which implies the right-hand side of \eqref{eq:H-s_2}.

\medskip

\textbf{Case II-b.} (high $\times$ high $\times$ high $\Rightarrow$ low). Under the condition $|n| \ll |n_1| \sim |n_3|$, an analogous argument ensures
\[\begin{aligned}
\eqref{eq:H-s_2.1} \lesssim&~{} \sum_{\substack{n,n_1,n_2 \\ |n| \ll |n_1| \sim|n_2|}} |n|^{1-s}|n_1|^{-3s} \wh{f}(n_1)\wh{f}(n_2)\wh{f}(n_3) |\wh{g}(-n)|\\
\lesssim&~{} \sum_{n \in \Z \setminus \set{0}}\left(\sum_{|n| \ll |n_1|} |n_1|^{- 6s} \right)^{\frac12}|n|^{1-s}|\wh{g}(-n)|\norm{f}_{L^2}^3\\
\lesssim&~{} \left(\sum_{n \in \Z \setminus \set{0}} |n|^{3- 8s} \right)^{\frac12}\norm{v}_{L_T^{\infty}H^s}^3\norm{g}_{L^2},
\end{aligned}\]
for $s > \frac12$, which implies the right-hand side of \eqref{eq:H-s_2}.

\medskip

\textbf{Case III-a.} (low $\times$ low $\times$ high $\Rightarrow$ high). We assume that $|n_1| \le |n_2| \ll |n_3| \sim |n|$. Since $|n|^{1-2s} \le |n_2|^{1-2s}$, The Cauchy-Schwarz inequality shows
\[\begin{aligned}
\eqref{eq:H-s_2.1} \lesssim&~{} \sum_{\substack{n,n_1,n_2 \\ |n_1| \le |n_2| \ll |n|}} |n_2|^{1-3s}|n_1|^{-s} \wh{f}(n_1)\wh{f}(n_2)\wh{f}(n_3) |\wh{g}(-n)|\\
\lesssim&~{} \sum_{n_1 \in \Z \setminus \set{0}}\left(\sum_{|n_1| \le |n_2|} |n_2|^{2- 6s} \right)^{\frac12}|n_1|^{-s}\wh{f}(n_1)\norm{f}_{L^2}^2\norm{g}_{L^2}\\
\lesssim&~{} \left(\sum_{n_1 \in \Z \setminus \set{0}} |n_1|^{3- 8s} \right)^{\frac12}\norm{v}_{L_T^{\infty}H^s}^3,
\end{aligned}\]
for $s > \frac12$, which implies the right-hand side of \eqref{eq:H-s_2}.

\medskip

\textbf{Case III-b.} (low $\times$ high $\times$ high $\Rightarrow$ low). Using the same argument as before with the fact $|n_2|^{-2s} \le \max(|n_1|,|n|)^{-2s}$, one obtain the right-hand side of \eqref{eq:H-s_2}. We omit the details and complete the proof.
\end{proof}

Now we are ready to prove Theorem \ref{thm:unconditional}. 
\begin{proof}[Proof of Theorem \ref{thm:unconditional}]
Let $T> 0$ be given and $v \in C([0,T];H^s(\T))$, $s > \frac12$ be a solution to \eqref{eq:kawahara_local}. Using the scaling argument (but we here fix larger $\la \gg 1$ satisfying $\la^{-5}T \lesssim 1$ as well.) Then, for $u$ ($\la$-scaled function of $v$), a straightforward calculation 
\begin{equation}\label{Embedding}
\norm{u}_{X_{\la}^{s,0}}
 \lesssim \norm{u}_{L_t^{\infty}H_{\la}^s}
\end{equation}
ensures that the solution $u$ belongs to $X_{\la}^{s,0}$. 

\medskip

On the other hand, Lemma \ref{lem:H-s} in addition to \eqref{Embedding} reveals\footnote{Lemma \ref{lem:H-s} is still available under the $\la$-periodic setting, but with $\la$-dependent bound in the right-hand side.} that 
\[\N_{R}^*(u) + \N_{NR}^*(u) \in X_{\la}^{-s,0},\]
which implies $u$ belongs to $X_{\la}^{-s,1}$ thanks to \eqref{eq:kawahara_local}. 

\medskip

The interpolation theorem for $X_{\la}^{s,0}$ with $X_{\la}^{-s,1}$ ensures $u \in X_{\la}^{0,\frac12}$, that is, the space $C_tH_{\la}^s$ is embedded in the space $X_{\la}^{0,\frac12}$, $s > \frac12$. Therefore, the uniqueness result in $X_{\la}^{0,\frac12}$ established in the previous section (a part of Theorem \ref{thm:local}) guarantees the uniqueness of $u$ in $C_tH_{\la}^s$ (thus, so the uniqueness of $v$ in $C_TH^s$), which completes of the proof of Theorem \ref{thm:unconditional}. 
\end{proof}

\appendix

\section{$L^4$-Strichartz estimates}\label{sec:L4}
The aim of this appendix is to provide the proof of $L^4$-Strichartz estimates (Lemma \ref{lem:L4}) for the sake of reader's convenience. 
\begin{proof}[Proof of Lemma \ref{lem:L4}]
The proof basically follows the proof of Lemma 2.1 in \cite{TT2004} associated the Airy flow. We also refer to \cite{Bourgain1993, KPV1996, Tao2001, Tao2006, Oh, Kwak2018-1, CC2018} for similar arguments. 

\medskip

Let $f = f_1 + f_2$, where 
\[\wh{f}_1(n) = 0, \quad \mbox{if} \quad |n| > 1.\]
Note that $|\{n \in \Z_{\lambda} : n \in \supp (\wh{f}_1)\}| = 2\lambda + 1$. Since $f^2 \le 2f_1^2 + 2f_2^2$, it suffices to treat $\norm{f_1^2}_{L^2(\R \times \T_{\la})}$ and $\norm{f_2^2}_{L^2(\R \times \T_{\la})}$ separately.

\medskip

\textbf{$f_1^2$ case.} A computation gives
\begin{equation}\label{eq:L^4_1}
\norm{f_1^2}_{L^2(\R \times \T_{\la})}^2 \le \int_{\Z_{\la}} \int_{\R} \left| \int_{\Z_{\la}}\int_\R  |\wt{f}_1(\tau_1,n_1)||\wt{f}_1(\tau -\tau_1,n-n_1)|\; d\tau_1 dn_1 \right|^2 \; d\tau d n.
\end{equation}
From the support property, the right-hand side of \eqref{eq:L^4_1} vanishes unless $|n| \le 2$. Let 
\[\wt{F}_1(\tau, n) = \bra{\tau- p_{\la}(n)}^{b}|\wt{f}_1(\tau,n)|.\]
The Cauchy-Schwarz inequality and the Minkowski inequality, we see that for $b > \frac14$,
\[\begin{aligned}
\mbox{RHS of } \eqref{eq:L^4_1} \lesssim&~{}\frac{1}{\la}\sum_{\substack{n \in \Z_{\la} \\ |n| \le 2}} \int_{\R} \Bigg( \int_{\Z_{\la}}\left(\int_\R \bra{\tau-\tau_1 - p_{\la}(n-n_1)}^{-2b}\bra{\tau_1- p_{\la}(n_1)}^{-2b} \; d\tau_1 \right)^{\frac12}\\
& \hspace{7em} \times \left(\int_\R  |\wt{F}_1(\tau_1,n_1)|^2|\wt{F}_1(\tau -\tau_1,n-n_1)|^2\; d\tau_1\right)^{\frac12} dn_1 \Bigg)^2 \; d\tau \\
\lesssim&~{}\frac{1}{\la}\sum_{\substack{n \in \Z_{\la} \\ |n| \le 2}} \Bigg(\int_{\Z_{\la}}\left(\int_{\R^2}  |\wt{F}_1(\tau_1,n_1)|^2|\wt{F}_1(\tau -\tau_1,n-n_1)|^2\; d\tau_1 d \tau \right)^{\frac12}\Bigg)^2\\
\lesssim&~{} \norm{f_1}_{X_{\lambda}^{0,b}}^4 \lesssim \norm{f}_{X_{\lambda}^{0,b}}^4.
\end{aligned}\]

\medskip

\textbf{$f_2^2$ case.} Analogous to \eqref{eq:L^4_1}, we have
\[
\begin{aligned}
\norm{f_2^2}_{L^2(\R \times \T_{\la})}^2 \le&~{} \int_{\Z_{\la}} \int_{\R} \left| \int_{\Z_{\la}}\int_\R  |\wt{f}_2(\tau_1,n_1)||\wt{f}_2(\tau -\tau_1,n-n_1)|\; d\tau_1 dn_1 \right|^2 \; d\tau d n\\
=&~{}\frac{1}{\la}\sum_{\substack{n \in \Z_{\la} \\ |n| \le 1}} \int_{\R} \left| \int_{\Z_{\la}}\int_\R  |\wt{f}_2(\tau_1,n_1)||\wt{f}_2(\tau -\tau_1,n-n_1)|\; d\tau_1 dn_1 \right|^2 \; d\tau d n\\
&+\frac{1}{\la}\sum_{\substack{n \in \Z_{\la} \\ |n| >1}} \int_{\R} \left| \int_{\Z_{\la}}\int_\R  |\wt{f}_2(\tau_1,n_1)||\wt{f}_2(\tau -\tau_1,n-n_1)|\; d\tau_1 dn_1 \right|^2 \; d\tau d n\\
=:&~{} I_1 + I_2.
\end{aligned}\]
The term $I_1$ can be treated similarly as \textbf{$f_1^2$ case}. For the term $I_2$, we may assume that $n_1 > 1$ and $n-n_1>1$ (thus, $n > 1$). Indeed, let $f_2 = f_{2,1} + f_{2,2}$, where
\[\wh{f}_{2,1}(n) = 0 \quad \mbox{if} \quad n > 1,\]
then $\norm{f_2^2}_{L^2}^2 \le 2\norm{f_{2,1}^2}_{L^2}^2 + 2\norm{f_{2,2}^2}_{L^2}^2$ and $\norm{f_{2,1}^2}_{L^2} = \norm{\overline{f_{2,1}}^2}_{L^2} = \norm{f_{2,2}^2}_{L^2}$. Similarly as \eqref{eq:L^4_1}, we have
\[\begin{aligned}
I_2 \lesssim&~{}\frac{1}{\la}\sum_{\substack{n \in \Z_{\la} \\ n > 1}} \int_{\R} \Bigg( \left(\int_{\Z_{\la}}\int_\R \bra{\tau-\tau_1 - p_{\la}(n-n_1)}^{-2b}\bra{\tau_1- p_{\la}(n_1)}^{-2b} \; d\tau_1dn_1 \right)^{\frac12}\\
& \hspace{7em} \times \left(\int_{\Z_{\la}}\int_\R  |\wt{F}_2(\tau_1,n_1)|^2|\wt{F}_2(\tau -\tau_1,n-n_1)|^2\; d\tau_1dn_1\right)^{\frac12}  \Bigg)^2 \; d\tau \\
\lesssim&~{} M\norm{f_2}_{X_{\lambda}^{0,b}}^4,
\end{aligned}\]
where 
\[M = \sup_{\substack{\tau \in \R, n \in \Z_{\la} \\ n > 1}}\int_{\Z_{\la}}\int_\R \bra{\tau-\tau_1 - p_{\la}(n-n_1)}^{-2b}\bra{\tau_1- p_{\la}(n_1)}^{-2b} \; d\tau_1dn_1.\]
Thus, it is enough to show that $M \lesssim 1$ whenever $b > \frac{3}{10}$.

\medskip

By a direct computation
\[\int_{\R} \bra{a}^{-\alpha} \bra{b-a}^{-\alpha} \; da \lesssim \bra{b}^{1-2\alpha},\]
for $\frac12 < \alpha < 1$, we estimate
\[M \lesssim \sup_{\substack{\tau \in \R, n \in \Z_{\la} \\ n > 1}} \frac{1}{\la} \sum_{\substack{n_1 \in \Z_{\la} \\ n_1, n-n_1 > 1}} \bra{\tau-p_{\la}(n_1) - p_{\la}(n-n_1)}^{1-4b}.\]
For each $\tau \in \R$ and $n \in \Z_{\la}$ with $n>1$, we compute
\[\begin{aligned}
&p_{\la}(n_1) + p_{\la}(n-n_1) - \tau \\
=&~{} 5n\left(n_1^4-2nn_1^3 + \left(2n + \frac35\beta \la^{-2}\right)n_1^2 -n\left(n^2 + \frac35\beta\la^{-2}\right)n_1 - C(\tau,n)\right) \\
&- \frac{\mu n_1}{(2\pi \la)^2}|\wh{v}_{\la,0}(n_1)|^2 - \frac{\mu (n-n_1)}{(2\pi \la)^2}|\wh{v}_{\la,0}(n-n_1)|^2,
\end{aligned}\]
where 
\[C(\tau,n) = \frac{\tau - n^5 - \beta \la^{-2} \left(\gamma + \frac{\mu}{2\pi \la}\norm{v_{\la,0}}_{L_{\la}^2}^2\right)n}{5n}.\]
Let 
\[h(x) = x^4-2nx^3 + \left(2n + \frac35\beta \la^{-2}\right)x^2 -n\left(n^2 + \frac35\beta\la^{-2}\right)x - C(\tau,n),\]
then, we know
\[\begin{aligned}
&\frac{d}{dx}h(x) = 4x^3-6nx^2 + 2\left(2n + \frac35\beta \la^{-2}\right)x -n\left(n^2 + \frac35\beta\la^{-2}\right),\\
&\frac{d^2}{dx^2}h(x) = 12x^2 - 12nx + 2\left(2n + \frac35\beta \la^{-2}\right) = 12\left(x + \frac{n}{2}\right)^2 + n + \frac65\beta \la^{-2} > 0.
\end{aligned}\]
This observation reveals that $h(x)$ has the minimum value at $x= \alpha$, where $h'(\alpha) = 0$. If $h(\alpha) \ge 0$, we write
\[h(x) = (x-\alpha)(x^3 + Ax^2 + Bx + C) + h(\alpha),\]
for some $A,B,C$ depending on $\tau, n$. Moreover, since $h'(\alpha) = 0$, $x=\alpha$ is also a zero of $x^3 + Ax^2 + Bx + C =0$, thus
\[h(x) = (x-\alpha)^2(x^2 + A'x + B') + h(\alpha) = (x-\alpha)^2\left(\left(x + \frac{A'}{2}\right)^2 + B' - \frac{A'}{4}\right) + h(\alpha),\]
for some $A',B'$ depending on $\tau, n$. Note that  $B' - \frac{A'}{4} \ge 0$. Let 
\[\Omega_1 = \set{n_1 \in \Z_{\la} : |n_1 - \alpha| \le 1} \quad \mbox{and} \quad \Omega_2 = \set{n_1 \in \Z_{\la} : |n_1 + A'/2| \le 1}.\]
Note that $|\Omega_1 \cup \Omega_2| \le 4\la +2$, and
\[h(n_1) \ge 1 \quad \mbox{in} \left(\Omega_1 \cup \Omega_2\right)^c.\]
Since $n \ge n_1$ and
\[\left|\frac{\mu n_1}{(2\pi \la)^2}|\wh{v}_{\la,0}(n_1)|^2 + \frac{\mu (n-n_1)}{(2\pi \la)^2}|\wh{v}_{\la,0}(n-n_1)|^2\right| \le\frac{n}{2\pi \la} \delta^2 ,\]
we conclude that if $\frac{\delta^2}{2\pi \lambda} \le \frac12$, then
\[p_{\la}(n_1) + p_{\la}(n-n_1) - \tau \ge \frac12n(n-\alpha)^2\left(n + \frac{A'}{2}\right)^2 \quad \mbox{in} \quad \left(\Omega_1 \cup \Omega_2\right)^c,\]
which implies
\[\begin{aligned}
M \lesssim&~{} \frac{1}{\lambda} \sum_{\Omega_1 \cup \Omega_2} 1 +\frac{1}{\la} \sum_{n_1 >1} n_1^{5(1-4b)}\\
&+\frac{1}{\la} \sum_{\Omega_1^c} |n_1 -\alpha|^{5(1-4b)} +\frac{1}{\la} \sum_{\Omega_2^c} |n_1 + A'/2|^{5(1-4b)}\\
\lesssim&~{} 1.
\end{aligned}\]

\medskip

On the other hand, if $h(\alpha) < 0$, $h(x)$ has two zeros so that $h$ is written as
\[h(x) = (x-\sigma_1)(x-\sigma_2)(x^2 + Ax + B),\]
for some $A,B$ depending on $\tau, n$, and $x^2 + Ax + B > 0$ for all $x$. Thus, 
\[h(x) = (x-\sigma_1)(x-\sigma_2)\left(\left(x + \frac{A}{2}\right)^2 + B-\frac{A}{4}\right),\]
where $B-\frac{A}{4} > 0$. Thus, similarly as above, we conclude $M \lesssim 1$ whenever $b > \frac{3}{10}$.
\end{proof}

\section{Weak ill-posedness in $H^s(\T)$, $0 \le s < \frac12$: Proof of Theorem \ref{thm:weak ill-posedness}}\label{sec:ill}
As mentioned in Section \ref{sec:intro}, the proof of Theorem \ref{thm:weak ill-posedness} closely follows Takaoka and Tsutsumi \cite{TT2004}, initially motivated by Burq, G\'erard and Tzvetkov \cite{BGT2002} (for the Schr\"odinger case) and Christ, Colliander and Tao \cite{CCT2003} (not only for the Schr\"odinger but also KdV cases).

\medskip

For the sake of simplicity, we consider the following equation
\[\pt v - \px^5 v + \frac{(2\pi)^2}{3}\left(v^2 - \oint v^2 \right)\px v = 0,\]
equivalently
\begin{equation}\label{eq:SK1}
\pt \wh{v} - i\left(n^5 + n|\wh{v}_0(n)|^2\right) \wh{v} = in(|\wh{v}(n)|^2 - |\wh{v}_0(n)|^2) - \frac{in}{3}\sum_{N_n} \wh{v}(n_1)\wh{v}(n_2)\wh{v}(n_3).
\end{equation}
Duhamel's principle in \eqref{eq:SK1} yields
\begin{equation}\label{eq:SK2}
\begin{aligned}
\wh{v}(n) =&~{} e^{it(n^5 + n|\wh{v}_0(n)|^2)}\wh{v}_0(n) \\
&+ \int_0^t e^{i(t-s)(n^5 + n|\wh{v}_0(n)|^2)} \left(in(|\wh{v}(n)|^2 - |\wh{v}_0(n)|^2) - \frac{in}{3}\sum_{N_n} \wh{v}(n_1)\wh{v}(n_2)\wh{v}(n_3) \right)\; ds.
\end{aligned}
\end{equation}
We denote the second term in the right-hand side of \eqref{eq:SK2} by 
\[\int_0^t \wh{F}(v)(s) \; ds.\]

\medskip

We fix $0 \le s < \frac12$. Let $K$ be a positive integer and set 
\[\varrho_K(x) = \rho\frac{e^{iKx} - e^{-iKx}}{2\pi i},\]
for sufficiently small but fixed $0<\rho \ll 1$\footnote{Here $\rho \ll 1$ ensures the validity of Theorem \ref{thm:local} without the scaling argument.}. We choose initial data $v_{0,K}$ and $v_{0,K}^*$ as follows:
\[\begin{aligned}
&v_{0,K}(x) = K^{-s}\varrho_K(x) \quad \mbox{and}\\
&v_{0,K}^*(x) = K^{-s}(1+\pi K^{2s-1 + \vartheta})^{\frac12}\varrho_K(x) = (1+\pi K^{2s-1 + \vartheta})^{\frac12}v_{0,K}(x),
\end{aligned}\]
for $0 < \vartheta < 1-2s$. A straightforward calculation gives
\[\norm{v_{0,K}}_{H^s}, ~\norm{v_{0,K}^*}_{H^s} \lesssim \rho \ll 1\]
and 
\[\norm{v_{0,K} - v_{0,K}^*}_{H^s} \lesssim \left|1 - (1+\pi K^{2s-1 + \vartheta})^{\frac12}\right| \to 0, \quad \mbox{as} \quad K \to \infty.\]
Taking $t_K = K^{-\vartheta}$, one has 
\[\begin{aligned}
&t_Kn|\wh{v}_{0,K}(n)|^2 = \rho^2\left(\delta_{n K} + \delta_{n -K}\right) K^{1-2s-\vartheta} \quad \mbox{and} \\
&t_Kn|v_{0,K}^*(n)|^2 = \rho^2\left(\delta_{n K} + \delta_{n -K}\right) \left( K^{1-2s-\vartheta} + \pi \right),
\end{aligned}\]
where $\delta_{i j}$ is well-known Kronecker delta.

\medskip

Theorem \ref{thm:local} ensures that there exist $T>0$ and solutions $v_K$ and $v_{K}^*$ to \eqref{eq:kawahara} on $[-T,T]$. We take $K$ sufficiently large such that $t_K < T$. Set $\Phi(K) := K^5$ and define
\[\wh{w}_K(n) = e^{it(\Phi(K)+n|\wh{v}_{0,K}(n)|^2)}\wh{v}_{0,K}(n) - e^{it(\Phi(K)+n|\wh{v}_{0,K}^*(n)|^2)}\wh{v}_{0,K}^*(n).\]
A direct computation gives
\begin{equation}\label{eq:lower bound}\norm{w_K(t_K)}_{H^s}^2 = \left|1-e^{i\pi}(1+2\pi K^{2s-1 + \vartheta})^{\frac12} \right|^2 \ge 4, \quad K \ge 1.
\end{equation}

\medskip

Suppose that the uniform continuity of the flow map holds true, which implies 
\begin{equation}\label{eq:UCD}
\sup_{t \in [-T,T]}\norm{v_K(t) - v_K^*(t)}_{H^s} \to 0, \quad \mbox{as} \quad K \to \infty.
\end{equation}
From \eqref{eq:SK2}, we have
\begin{equation}\label{eq:inner}
(v_K(t_K) - v_K^*(t_K), w_K(t_K))_{H^s}  =~{} \norm{w_K(t_K)}_{H^s} + \left(\int_0^{t_K} F(v_K)(s) - F(v_K^*)(s) ,w_K(t_K)\right)_{H^s},
\end{equation}
where $(\cdot,\cdot)_{H^s}$ is an usual $H^s$ inner product. The standard argument under the periodic setting\footnote{Another auxiliary space based on $\ell_n^2L_{\tau}^1$ is necessary in order to recover the lack of the embedding property ($X^{s,\frac12} \not\hookrightarrow C_tH^s$). Such a space has a property (duality), in our case,
\[\left\|\int_0^t e^{i(t-s)p(-i\px)} w(s) \; ds\right\|_{H_x^s} \lesssim \norm{\bra{n}^s\bra{\tau - p(n)}^{-1} \wt{w}}_{\ell_n^2L_{\tau}^1}.\]
}, in addition to Lemma \ref{lem:nonres-trilinear} and Proposition \ref{prop:main} with the uniform boundedness of $v$ in $X_T^{s,\frac12}$, ensures the second term in the right-hand side of \eqref{eq:inner} (is bounded by $t_K^{\theta'}$ for some $\theta' > 0$ similarly as the right-hand side of \eqref{eq:a priori} without the initial part term, and hence) tends to $0$ as $K \to \infty$. Using \eqref{eq:lower bound} and \eqref{eq:UCD} in \eqref{eq:inner}, one concludes the contradiction, which ends the proof.

\end{document}